\documentclass[12pt]{article}

\usepackage{geometry}
 \geometry{
 letterpaper,
 total={165mm,225mm},
 left=25mm,
 top=25mm,
 }

\usepackage{hyperref}

\usepackage{graphicx}
\usepackage{epstopdf}
\usepackage{bold-extra}
\usepackage{mathtools}
\usepackage{psfrag}
\usepackage{graphicx}
\usepackage{pgfplots}
\usepackage{amssymb}
\usepackage{subcaption}

\usepackage{xspace}

\usepackage{enumitem}
\setitemize{label=\scriptsize{$\blacksquare$}}
\setenumerate{label=(\alph*)}
\usepackage{amssymb,amsthm}

\newtheorem{theorem}{Theorem}
\newtheorem{lemma}[theorem]{Lemma}
\newtheorem{remark}[theorem]{Remark}

\newtheorem{proposition}[theorem]{Lemma}
\newtheorem{condition}[theorem]{Condition}

\numberwithin{equation}{section}
\numberwithin{figure}{section}
\numberwithin{theorem}{section}

\newcommand{\imi}{\mathsf{i}}
\newcommand{\mua}{\mu_a}
\newcommand{\mus}{\mu_s}
\newcommand{\barmua}{\bar{\mu}_a}
\newcommand{\barmus}{\bar{\mu}_s}
\newcommand{\qo}{f}
\newcommand{\qi}{q}
\newcommand{\fluence}{\phi}

\newcommand{\R}{\mathbb R}

\newcommand{\N}{\mathbb N}
\newcommand{\sph}{\mathbb S}
\newcommand{\edot}{\,\cdot\,}
\newcommand{\trans}{\mathsf{T}}
\newcommand{\lin}{\star}

\newcommand{\Wo}{\mathbf W}
\newcommand{\Ho}{\mathbf H}
\newcommand{\Vo}{\mathbf V}

\newcommand{\Fo}{\mathbf F}

\newcommand{\Ao}{\mathbf A}
\newcommand{\Ko}{\mathbf K}
\newcommand{\Io}{\mathbf I}
\newcommand{\Do}{\mathbf D}

\newcommand{\supp}{\mbox{supp}}
\newcommand{\rmd}{\mathrm d}
\newcommand{\eps}{\epsilon}
\newcommand{\mS}{\partial D}

\newcommand{\al}{\alpha}
\newcommand{\la}{\lambda}
\newcommand{\dom}{\mathcal M}
\newcommand{\diam}{\operatorname{diam}}
\newcommand{\dist}{\operatorname{dist}}

\newcommand\abs[1]{\left\vert#1\right\vert}
\newcommand\sabs[1]{\lvert#1\rvert}
\newcommand\norm[1]{\left\Vert#1\right\Vert}
\newcommand\snorm[1]{\Vert#1\Vert}

\newcommand\set[1]{\bigl\{#1\bigr\}}
\newcommand{\kl}[1]{\left(#1\right)}

\newcommand{\skl}[1]{(#1)}

\newcommand\sinner[2]{\langle#1,#2\rangle}

\makeatletter
\newcommand*\bigcdot{\mathpalette\bigcdot@{.6}}
\newcommand*\bigcdot@[2]{\mathbin{\vcenter{\hbox{\scalebox{#2}{$\m@th#1\bullet$}}}}}
\makeatother

\newcommand\ip[2]{{#1}\bigcdot {#2}}

\newcommand{\Om}{\Omega}
\newcommand{\La}{\Lambda}

\newcommand{\cT}{\mathbb{T}}
\newcommand{\wf}{\mathrm{WF}}
\newcommand{\data}{v}

\allowdisplaybreaks

\title{Analysis of the Linearized Problem of Quantitative Photoacoustic Tomography}

\author{Markus Haltmeier\footnotemark[2]  \and Lukas Neumann\footnotemark[3]  \and Linh V. Nguyen\footnotemark[4] \and Simon Rabanser\footnotemark[2]}

\date{\small \footnotemark[2] Department of Mathematics, University of Innsbruck\\
Technikerstrasse 13, A-6020 Innsbruck, Austria\\
  { \tt \{Markus.Haltmeier,Simon.Rabanser\}@uibk.ac.at}\\[1em]
\small\footnotemark[3] Institute of Basic Sciences in Engineering Science, University of Innsbruck\\Technikerstra{\ss}e 13, A-6020 Innsbruck, { \tt Lukas.Neumann@uibk.ac.at}\\[1em]
\small \footnotemark[4]
Department of Mathematics, University of Idaho\\
875 Perimeter Dr, Moscow, ID 83844, {\tt lnguyen@uidaho.edu}}

\begin{document}
\maketitle

\begin{abstract}
Quantitative image reconstruction in  photoacoustic tomography
requires the solution of a coupled physics inverse problem involving
light transport  and acoustic wave propagation.  In this paper we address this issue employing
the  radiative transfer equation  as accurate model for light transport. As  main theoretical results,
we derive  several  stability and uniqueness results for the linearized  inverse problem.  We consider  the case of   single  illumination as well as the case of multiple  illuminations assuming full or partial data.
The numerical solution of the linearized problem is much less costly than the solution of the non-linear problem. We present numerical simulations supporting  the stability results for the linearized problem and demonstrate that the linearized problem already gives accurate quantitative results.

\medskip
\noindent
\textbf{Key words:}
Quantitative photoacoustic tomography, partial data, radiative transfer equation, multiple illuminations, linearization, two-sided stability estimates,  image reconstruction.

\medskip
\noindent
\textbf{AMS subject classification:}
45Q05,
65R32,
47G30.
\end{abstract}

\section{Introduction}
\label{sec:intro}

Photoacoustic tomography (PAT)  is a  recently developed coupled physics  imaging
modality  that combines the  high spatial resolution of ultrasound imaging
with the high contrast of optical imaging \cite{Bea11,KruKopAisReiKruKis00,WanAna11,Wan09b,XuWan06}. When a semitransparent sample is illuminated with a short pulse of laser light, parts of the optical energy are absorbed inside the sample, which in turn induces an acoustic pressure wave. In PAT, acoustic pressure waves are measured outside of the  object of interest and mathematical algorithms are used  to recover an image of the interior. Initial work and also  recent works in PAT concentrated on the  problem of reconstructing  the initial pressure distribution, which has been considered as final diagnostic image (see, for example,  \cite{AgrKucKun09,BurBauGruHalPal07,FilKunSey14,FinHalRak07,FinRak07,FinPatRak04,Hal14,haltmeier2016iterative,HalSchuSch05,huang2013full,Kow14,kuchment2014radon,Kun07a,KucKun08,nguyen2016dissipative,RosNtzRaz13,SteUhl09,XuWan06}). However, the initial pressure distribution only provides qualitative information about the tissue-relevant parameters.
This is due to the fact that the initial pressure distribution is the product of the optical absorption coefficient
and the spatially varying optical intensity which again indirectly depends on the tissue parameters.  Quantitative photoacoustic tomography (qPAT)  addresses this issue and aims at quantitatively estimating the tissue parameters by supplementing the  inversion of the acoustic wave equation
with an inverse problem for the light propagation  (see, for example, \cite{KruLiuFanApp95,CoxArrKoeBea06,CoxArrBea07a,RosRazNtz10,YaoSunHua10,AmmBosJugKang11,BalJolJug10,BalRen11,CheYa12,cox2012quantitative,TarCoxKaiArr12,RenHaoZha13,SarTarCoxArr,MamRen14,NatSch14,DezDez15,haltmeier2015single}).

The radiative transfer equation (RTE) is commonly considered
as a very accurate model for light transport in tissue (see, for example, see \cite{Arr99,DaLiVol6,EggSch15,Kan10}) and will be employed in this paper.
As proposed in \cite{haltmeier2015single} we work with a single-stage
reconstruction procedure for qPAT, where the optical parameters are reconstructed directly from the measured acoustical data. This is in contrast to the more common two-stage procedure, where the measured boundary pressure values are used to recover the initial internal pressure distribution in an intermediate  step, and the spatially varying  tissue parameters are estimated from the initial pressure distribution in a second step. However, as pointed out in \cite{gao2015limited,haltmeier2015single}, the two-stage approach has  several drawbacks, such as the missing capability of dealing with multiple illuminations
using incomplete acoustic measurements in each experiment. The single-stage strategy can also be combined with the diffusion approximation; see \cite{bal2016photo,bergounioux2014optimal,ding2015one,gao2015limited,yuan2012calibration}. The diffusion approximation is numerically less costly to solve than the RTE.  However, we work with the RTE since it is the more accurate model for light propagation in tissue.

\subsection{The linearized inverse problem of qPAT}

In this paper, we study the linearized inverse problem of qPAT using  the RTE. We present a uniqueness and stability analysis for both single and multiple illuminations.
Our strategy is to first analyze the optical (heating) and acoustic processes separately,  and then combine them together.
The uniqueness and stability analysis of the acoustic process is well established and we make use of existing  results. The study of the heating process, on the other hand, is much less understood.
Its analysis is our main emphasis and our contribution includes several
uniqueness and stability results.
We derive stability estimates of the form
\begin{equation*}
    C_1 \snorm{h_a}_{L^2(\Om)} \leq \snorm{\Wo\Do h_a}_{L^2(\Lambda\times (0,T))} \leq C_2 \snorm{h_a}_{L^2(\Om)}
      \,,
 \end{equation*}
for the unknown absorption parameter perturbation
$h_a= \mua -\mua^\lin$, where $\mua$ and $\mua^\lin$ are the actual
and background  absorption coefficients, respectively.
Here $\Wo \Do \colon L^2(\Om) \to L^2(\Lambda\times (0,T))$ is the linearized  forward operator of qPAT with respect to the attenuation at
$\mua^\lin$.
Such results are derived under different conditions
for vanishing scattering (Theorems~\ref{thm:sigma0-stablity}  and~\ref{thm:no}),
non-vanishing scattering (Theorem~\ref{thm:scatt}), as well as multiple
illuminations (Theorem~ \ref{thm:multiple}). Our analysis is inspired by~\cite{kuchment2012stabilizing} where microlocal analysis was employed to analyze the stability of an inverse problem with internal data (see also \cite{bal2013linearized,bal2014local,bal2014hybrid,kuchment2015stabilizing,montalto2013stability,widlak2015stability} for related works). We also take advantage of previous works on RTE  \cite{stefanov2003optical,EggSch14b} and microlocal acoustic wave inversion \cite{SteUhl09}.

The feasibility of solving the linearized  problem  is illustrated by numerical simulations presented
in Section~\ref{sec:num}. For  numerically solving the linearized problem  we use the Landweber iteration that can also be employed for the fully non-linear problem, see
\cite{haltmeier2015single}.
We point out that solving the linearized inverse  problem is  computationally much
less costly than solving the fully non-linear problem. Our numerical results  show that in  many situation    the solution of the linearized problem  already gives quite
accurate reconstruction results.

\subsection{Outline}

 The rest of the paper is organized as follows. In Section \ref{sec:forward} we
 present a Hilbert space framework for qPAT using the RTE. We consider
 the single illumination as well as the multiple illumination case and
 recall the well-posedness of the  non-linear forward operator. We
 further give  its G\^ateaux-derivative, whose inversion constitutes
 the linearized inverse problem of qPAT. In Section~\ref{sec:stability} we present our main
 stability and  uniqueness results for the linearized  inverse problem.
 Our theoretical results are supplemented  by numerical examples presented  in
 Section~\ref{sec:num}.  The paper concludes with a short summary and  outlook
 presented in  Section~\ref{sec:conclusion}.

\section{The forward problem in qPAT}
\label{sec:forward}

In this section we describe the forward problem of qPAT in a Hilbert space framework
using the RTE.
Allowing for $N$ different illumination  patterns, the forward problem is given by
a nonlinear operator $\Fo = (\Wo_i \circ \Ho_i)_{i=1}^N$, where
the operator $\Ho_i$ models the  optical heating  and $\Wo_i$ the acoustic measurement
due to the $i$-th illumination.  These operators are  described and analyzed
in detail in the  following.

\subsection{Notation}

Throughout this paper,   $\Om \subseteq \R^d$ denotes a convex bounded domain with Lipschitz-boundary $\partial\Om$,  where $d \geq 2 $ denotes the spatial dimension. We
write
\begin{align*}
\Gamma_-& \coloneqq
\left\{(x,\theta)\in\partial\Om\times \sph^{d-1}\mid \ip{\nu(x)}{\theta}<0\right\}   \,.
\end{align*}
Here  $\nu(x)$ denotes the outward unit normal at $x \in \partial \Om$ and
$\ip{a}{b}$ the standard scalar product of $a,b \in\R^d$.
We denote by  $L^2 \skl{\Gamma_-, \sabs{\ip{\nu}{\theta}}}$ the space of all measurable functions $\qo$ defined on $\Gamma_-$  such that
\begin{equation} \label{eq:W}
\norm{\qo}_{L^2\skl{\Gamma_-, \sabs{\ip{\nu}{\theta}}}}
 \coloneqq
  \sqrt {\int_{\Gamma_-}   \abs{\ip{\nu(x)}{\theta}} \abs{\qo(x,\theta)}^2 \rmd (x,\theta)} < \infty \,.
  \end{equation}
We further denote by $W^2(\Om\times \sph^{d-1})$  the space of all measurable functions defined on
$\Om\times \sph^{d-1}$  such that
\begin{equation}\label{eq:W2}
\norm{\Phi}_{W^2\skl{\Om \times \sph^{d-1}}}^2
\coloneqq
\norm{\Phi}_{L^2\skl{\Om \times \sph^{d-1}}}^2 +
\norm{\ip{\theta}{\nabla_x \Phi}}_{L^2\skl{\Om \times \sph^{d-1}}}^p +
\norm{\Phi|_{\Gamma_-}}_{L^2\kl{\Gamma_-,
\abs{\ip{\nu}{\theta}}}}^2
\end{equation}
is well defined and finite.
The subspace of all elements $\Phi \in W^2(\Om\times \sph^{d-1})$  satisfying
 $\Phi|_{\Gamma_-}=0$ will be denoted by  $W^2_0(\Om\times \sph^{d-1})$.
The spaces $L^\infty \skl{\Gamma_-, \sabs{\ip{\nu}{\theta}}}$, $W^\infty(\Om\times \sph^{d-1})$ and $W^\infty_0 (\Om\times \sph^{d-1})$ are defined in a similar manner by considering the  $L^\infty$-norms instead of the $L^2$-norms
in~\eqref{eq:W}, \eqref{eq:W2}.

For fixed positive numbers $\barmua, \barmus>0$ we write
\begin{equation} \label{eq:d2}
\dom  \coloneqq \set{ (\mua, \mus) \in  L^2 \kl{\Om} \times L^2 \skl{\Om } \mid  0 \leq \mus \leq  \barmus
\text{ and } 0 \leq \mua \leq  \barmua }  \,,
\end{equation}
for the parameter set of unknown attenuation and scattering coefficients.
Note that $\dom$ is a closed, bounded and convex subset of  $L^2 \kl{\Om} \times L^2 \kl{\Om}$ with empty interior.
Finally, we denote by $C^\infty_\Om(\R^d)$ and $L^p_\Om(\R^d)$, for $p \in  [1, \infty]$,
the set of  all elements in $C^\infty(\R^d)$ and $L^p(\R^d)$ that vanish outside of $\Omega$.

\subsection{The heating operator}

Throughout this subsection,  let $\qi\in L^\infty\skl{\Om\times \sph^{d-1}}$ and $\qo \in L^\infty\kl{\Gamma_-, \abs{\ip{\nu}{\theta}}}$ be a given source pattern and boundary light source, respectively. Further,  we denote by $k \colon \sph^{d-1} \times \sph^{d-1} \to \R$
the scattering kernel, which is supposed to be a symmetric and nonnegative function that satisfies
$\int_{\sph^{d-1}}  k \kl{\edot, \theta'} \rmd \theta' = 1$.

We model the optical radiation by a function $\Phi \colon  \Om \times \sph^{d-1} \to \R$, where $\Phi \kl{x, \theta}$ is the density of photons at location $x \in \Om$ propagating in direction $\theta \in \sph^{d-1}$.
The photon density is supposed to satisfy the RTE,
\begin{equation}\label{eq:strong}
\left\{
\begin{aligned}
\kl{ \ip{\theta}{\nabla_x}
+ \kl{\mua   + \mus - \mus \Ko} }
\Phi
&=
\qi
&& \text{ in } \Om \times \sph^{d-1}\\
\Phi|_{\Gamma_-}
&=
\qo
&& \text{ on } \Gamma_-\,.
\end{aligned}
\right.
\end{equation}
Here $\Ko \colon  L^2\skl{\Om \times \sph^{d-1}} \to L^2\skl{\Om \times \sph^{d-1}} $ denotes the  scattering operator defined  by
$\kl{\Ko \Phi} \kl{x, \theta} = \int_{\sph^{d-1}}  k(\theta, \theta' ) \Phi(x, \theta') \rmd \theta'$. See, for example, \cite{arridge2009optical,dorn1998transport,EggSch15,mcdowall2010stability,stefanov2003optical} for the RTE in optical tomography.

\begin{lemma}[Well-posedness of the RTE]  \label{lem:exist}
For every $(\mua,\mus) \in \dom$,  \eqref{eq:strong} admits a unique solution  $\Phi\in W^2(\Om\times \sph^{d-1})$. Moreover, there exists a
constant $C_2(\barmua, \barmus)$ only depending on $\barmua$ and $\barmus$, such that the following a-priori estimate holds
\begin{equation} \label{eq:apriori}
 \norm{\Phi}_{W^2\skl{\Om \times \sph^{d-1}}}
\leq
 C_2(\barmua, \barmus) \kl{ \norm{q}_{L^2\skl{\Om\times \sph^{d-1}}} + \norm{f}_{L^2\kl{\Gamma_-, \abs{\ip{\nu}{\theta}}}} }\,.
\end{equation}
\end{lemma}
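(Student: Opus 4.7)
The plan follows the classical three-step strategy for the RTE: homogenize the boundary data, invert the pure transport--absorption operator along characteristics, and then handle the scattering $\mus \Ko$ as a bounded perturbation via a fixed-point argument. For \textbf{Step 1 (lift)}, set $\Phi_0(x,\theta) \coloneqq \qo(x - \tau_-(x,\theta)\theta, \theta)$, where $\tau_-(x,\theta)$ is the backward exit distance from $x$ in direction $-\theta$. Then $\Phi_0$ solves the free transport problem $\ip{\theta}{\nabla_x}\Phi_0 = 0$ with $\Phi_0|_{\Gamma_-} = \qo$, and a direct change of variables yields $\snorm{\Phi_0}_{W^2\skl{\Om \times \sph^{d-1}}} \leq C \snorm{\qo}_{L^2\skl{\Gamma_-, \sabs{\ip{\nu}{\theta}}}}$ with $C$ depending only on $\diam(\Om)$. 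Writing $\Phi = \Phi_0 + \Psi$, the residual $\Psi$ must lie in $W^2_0$ and satisfy the RTE with modified source $\tilde\qi \coloneqq \qi - (\mua + \mus - \mus\Ko)\Phi_0 \in L^2(\Om\times\sph^{d-1})$, controlled in $L^2$ by the right-hand side of \eqref{eq:apriori} up to constants depending on $\barmua,\barmus$.

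\textbf{Step 2 (transport--absorption inverse).} Define $\Ao \coloneqq \ip{\theta}{\nabla_x} + \mua + \mus$ on $W^2_0(\Om\times\sph^{d-1})$. Along the characteristic $s \mapsto x - s\theta$ the equation $\Ao\Phi = g$ with vanishing inflow reduces to a scalar linear ODE in $s$, whose explicit solution via the integrating factor reads
\begin{equation*}
(\Ao^{-1} g)(x,\theta) = \int_0^{\tau_-(x,\theta)} \exp\!\kl{-\int_0^s (\mua+\mus)(x-r\theta)\,\rmd r}\, g(x-s\theta,\theta)\,\rmd s \,.
\end{equation*}
Using $0 \leq \mua + \mus \leq \barmua + \barmus$ and $\tau_- \leq \diam(\Om)$, Cauchy--Schwarz and Fubini give $\snorm{\Ao^{-1}}_{L^2 \to L^2} \leq \diam(\Om)$; the directional-derivative and boundary-trace components of the $W^2$-norm in \eqref{eq:W2} are then controlled by rewriting $\ip{\theta}{\nabla_x}\Ao^{-1}g = g - (\mua+\mus)\Ao^{-1}g$ and noting that $(\Ao^{-1}g)|_{\Gamma_-} = 0$ by construction. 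Hence $\Ao^{-1} \colon L^2(\Om\times\sph^{d-1}) \to W^2_0(\Om\times\sph^{d-1})$ is bounded with norm depending only on $\diam(\Om), \barmua, \barmus$.

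\textbf{Step 3 (scattering fixed point).} The equation for $\Psi$ now reads $\Psi = \Ao^{-1}\tilde\qi + \Ao^{-1}\mus\Ko\Psi$. The delicate point is that $\Ko$ has operator norm one on $L^2(\Om\times\sph^{d-1})$, so the naive estimate $\snorm{\Ao^{-1}\mus\Ko}_{L^2\to L^2} \leq \barmus \diam(\Om)$ need not be less than one. The remedy is the equivalent exponentially weighted norm $\snorm{\Psi}_\lambda \coloneqq \snorm{e^{-\lambda \tau_-}\Psi}_{L^2}$; inserting the weight into the Volterra integral defining $\Ao^{-1}$ and choosing $\lambda > \barmus$ makes $\Ao^{-1}\mus\Ko$ a strict contraction with respect to $\snorm{\cdot}_\lambda$. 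Banach's fixed-point theorem then delivers a unique $\Psi \in W^2_0$ with $\snorm{\Psi}_{W^2} \leq C(\barmua,\barmus)\snorm{\tilde\qi}_{L^2}$, and combining this with Step 1 yields \eqref{eq:apriori}. Uniqueness for \eqref{eq:strong} follows because any two solutions share the same inflow trace $\qo$, so their difference lies in $W^2_0$ and solves the homogeneous RTE, which the same contraction argument forces to vanish. The main obstacle is precisely this Step 3: since $\snorm{\Ko}=1$, contractivity is not automatic, and it must be engineered by exploiting the finite transport length $\tau_- \leq \diam(\Om)$ together with the non-negativity of the scattering kernel $k$ through the weighted norm.
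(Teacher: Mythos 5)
Your Steps 1 and 2 are standard and correct: the lift of the inflow data along characteristics and the explicit Volterra inverse of $\ip{\theta}{\nabla_x}+\mua+\mus$ with $\snorm{\Ao^{-1}}_{L^2\to L^2}\leq \diam(\Om)$ both go through. The genuine gap is in Step 3, which you rightly identify as the crux but whose proposed remedy does not work. With the weight $w=e^{-\lambda\tau_-}$, the weighted estimate of $\Ao^{-1}\mus\Ko\Psi$ produces, against $[w\Psi](x-s\theta,\theta')$, the factor $e^{\lambda\left(\tau_-(x-s\theta,\theta')-\tau_-(x,\theta)\right)}=e^{\lambda\left(\tau_-(x-s\theta,\theta')-\tau_-(x-s\theta,\theta)-s\right)}$, and because $\Ko$ re-randomizes the direction, $\tau_-(x-s\theta,\theta')$ can exceed $\tau_-(x-s\theta,\theta)+s$ by up to $\diam(\Om)$; the exponent is then positive, so increasing $\lambda$ makes the bound worse, not better (and the same happens with the opposite sign of the weight). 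Exponential-weight tricks need a quantity that is monotone along the full dynamics, and no function of the exit distance is monotone once the direction changes --- this is the real obstruction, since a multiply scattered photon travels an unbounded total path inside the bounded domain. A Banach fixed point closes only under a smallness or subcriticality hypothesis, e.g.\ $\barmus\,\diam(\Om)<1$ (via your own bound $\snorm{\Ao^{-1}\mus\Ko}\leq\barmus\diam(\Om)$) or $\mua\geq c>0$, whereas Lemma~\ref{lem:exist} asserts constants depending only on $\barmua,\barmus$ uniformly over $\dom$, in particular for conservative scattering with $\mua\equiv 0$ and $\barmus$ arbitrary. Your uniqueness argument inherits the same unproved contraction.

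For comparison, the paper does not prove the lemma at all: its proof is the citation to \cite{EggSch14b}, which treats exactly this vanishing-absorption, conservative-scattering regime. The quantitative ingredient that replaces your Step 3 there is a strictly-less-than-one operator bound of the type $\snorm{\Vo_0^{-1}\mus\Ko}\leq 1-e^{-\snorm{\mus\ell_+}_\infty}$, which the present paper itself quotes from \cite{EggSch14b} in the proof of Lemma~\ref{lem:scatt}. In the sup-norm setting such a bound follows from the pointwise inequality
\begin{equation*}
\int_0^{\ell(x,\theta)}\mus(x-t\theta)\,e^{-\int_0^t(\mua+\mus)(x-\tau\theta)\,\rmd\tau}\,\rmd t\;\leq\;1-e^{-\int_0^{\ell(x,\theta)}\mus(x-\tau\theta)\,\rmd\tau}\,,
\end{equation*}
using $\mua\geq0$ and the normalization $\int_{\sph^{d-1}}k(\edot,\theta')\,\rmd\theta'=1$; transferring it to the $L^2$-based $W^2$ framework of \eqref{eq:apriori} requires the additional (variational) analysis of \cite{EggSch14b} rather than a weighted contraction. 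So either add a smallness assumption such as $\barmus\diam(\Om)<1$ (which weakens the lemma), or replace Step 3 by this argument or by the citation, as the paper does.
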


\begin{proof}
See \cite{EggSch14b}.
\end{proof}

The absorption of photons causes a non-uniform heating of the tissue that is proportional to the total amount of absorbed photons.
We model this by the heating operator $\Ho_{\qo,\qi} \colon \dom\to L^2(\Om)$ that is defined by
\begin{equation}\label{eq:operator}
\Ho_{\qo,\qi} \kl{\mua,\mus}(x)
\coloneqq
   \mua(x)  \int_{\sph^{d-1}} \Phi(x, \theta) \rmd \theta
 \quad \text{ for } x \in \Om  \,,
\end{equation}
with  $\Phi$ denoting the  solution of~\eqref{eq:strong}.

\begin{lemma}\label{lem:contH}
The heating operator $\Ho_{\qo,\qi} \colon \dom\to L^2(\Om)$ is  well  defined, Lipschitz-continuous and weakly continuous.
\end{lemma}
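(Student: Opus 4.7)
The plan is to dispose of well-definedness and Lipschitz continuity together via an $L^\infty$-upgrade of Lemma~\ref{lem:exist}, and then to handle weak continuity by a velocity-averaging compactness argument. Well-definedness is immediate: for $(\mua,\mus)\in\dom$, Lemma~\ref{lem:exist} gives a unique $\Phi\in W^2\skl{\Om\times\sph^{d-1}}$, and Cauchy--Schwarz on $\sph^{d-1}$ together with the bound $\snorm{\mua}_{L^\infty}\le\barmua$ places $\mua\int_{\sph^{d-1}}\Phi\,\rmd\theta$ in $L^2(\Om)$. For the Lipschitz estimate, let $\Phi_i$ denote the RTE solutions at $(\mua^i,\mus^i)\in\dom$, $i=1,2$; the difference $\Psi=\Phi_1-\Phi_2$ satisfies the RTE with coefficients $(\mua^1,\mus^1)$, zero boundary data, and interior source $-(\mua^1-\mua^2)\Phi_2-(\mus^1-\mus^2)(I-\Ko)\Phi_2$. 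Since only $L^2$-control of the coefficient differences is available, I would first upgrade Lemma~\ref{lem:exist} to a uniform $L^\infty$-bound on $\Phi_2$ by a standard Neumann-series / method-of-characteristics argument for the RTE with $L^\infty$ data $\qo,\qi$ and bounded coefficients. Then Lemma~\ref{lem:exist} applied to the difference equation yields
\[
\snorm{\Psi}_{W^2}\le C\bigl(\snorm{\mua^1-\mua^2}_{L^2(\Om)}+\snorm{\mus^1-\mus^2}_{L^2(\Om)}\bigr),
\]
and splitting $\Ho_{\qo,\qi}(\mua^1,\mus^1)-\Ho_{\qo,\qi}(\mua^2,\mus^2)=(\mua^1-\mua^2)\int\Phi_1\,\rmd\theta+\mua^2\int\Psi\,\rmd\theta$, estimating the first summand by $\snorm{\Phi_1}_{L^\infty}\snorm{\mua^1-\mua^2}_{L^2}$ and the second by $\barmua\snorm{\Psi}_{L^2}$ via Cauchy--Schwarz in $\theta$, delivers Lipschitz continuity.

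For weak continuity, suppose $(\mua^n,\mus^n)\rightharpoonup(\mua,\mus)$ in $L^2(\Om)^2$; the limit lies in $\dom$, which is closed and convex. By Lemma~\ref{lem:exist} and the $L^\infty$-upgrade the solutions $\Phi^n$ are uniformly bounded in $W^2$ and in $L^\infty\skl{\Om\times\sph^{d-1}}$, so along a subsequence $\Phi^n\rightharpoonup\tilde\Phi$ weakly in $W^2$ and weakly-$\ast$ in $L^\infty$. The decisive ingredient is a velocity-averaging argument: since $\theta\cdot\nabla\Phi^n$ is bounded in $L^2$, the angular moments $\int_{\sph^{d-1}}\Phi^n(\cdot,\theta)\phi(\theta)\,\rmd\theta$ are precompact in $L^2(\Om)$ for every $\phi\in L^\infty(\sph^{d-1})$, hence converge strongly to $\int\tilde\Phi\,\phi\,\rmd\theta$. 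Testing the RTE for $\Phi^n$ against tensor-product functions $\psi(x)\phi(\theta)$ then reduces each term containing $\Phi^n$ to a weak-$\times$-strong pairing of the coefficients against a strongly convergent angular moment, so one may pass to the limit in every term and identify $\tilde\Phi$ with the unique RTE solution at $(\mua,\mus)$; uniqueness of the limit upgrades the subsequential statement to the full sequence. Applying the same pairing with $\phi\equiv 1$ gives $\mua^n\int\Phi^n\,\rmd\theta\rightharpoonup\mua\int\Phi\,\rmd\theta$ in $L^2(\Om)$, which is the desired weak continuity of $\Ho_{\qo,\qi}$.

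The main obstacle is the identification of $\tilde\Phi$ as the RTE solution at the weak limit: the map $(\mua,\mus)\mapsto\Phi$ is genuinely nonlinear, and the weak limits of the products $\mua^n\Phi^n$ and $\mus^n\Ko\Phi^n$ do not pass through for free; the argument really hinges on velocity averaging (combined with the angular smoothing of $\Ko$) to supply the spatial compactness that overcomes the weak-$\times$-weak obstruction.
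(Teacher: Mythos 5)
The paper gives no in-text proof of this lemma; it simply cites \cite{haltmeier2015single}, so there is nothing internal to compare against line by line. Your argument is correct and follows the same standard route that the paper's own toolbox suggests: uniform a-priori bounds from Lemma~\ref{lem:exist} plus an $L^\infty$ bound on $\Phi$ for the Lipschitz estimate, and the averaging lemma (which the paper itself invokes after Lemma~\ref{lem:psi}) to supply the spatial compactness needed to pass to the limit in the products $\mua^n\Phi^n$ and $\mus^n\Ko\Phi^n$ for weak continuity. Two points deserve explicit justification rather than the label ``standard'': first, the uniform $L^\infty$ bound on $\Phi$ over all of $\dom$ holds because along characteristics the scattering is dominated by the total attenuation, giving $\snorm{(\ip{\theta}{\nabla_x}+\mua+\mus)^{-1}\mus\Ko}_{L^\infty\to L^\infty}\le 1-e^{-\barmus\,\diam(\Om)}<1$ uniformly in $(\mua,\mus)\in\dom$, so the Neumann series converges in $L^\infty$ with a constant depending only on $\barmua,\barmus,\diam(\Om),\snorm{\qi}_\infty,\snorm{\qo}_\infty$; and second, in the identification step you should note that the $W^2$-norm controls the trace on $\Gamma_-$, so weak convergence $\Phi^n\rightharpoonup\tilde\Phi$ in $W^2$ preserves the boundary datum $\qo$ and the weak formulation together with the uniqueness statement of Lemma~\ref{lem:exist} then pins down $\tilde\Phi$ as the solution at the limiting coefficients. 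With these details made explicit, your proof is complete and self-contained, which is arguably more than the paper itself provides.
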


\begin{proof}
See~\cite{haltmeier2015single}.
\end{proof}

We next compute the derivative of $\Ho_{\qo,\qi}$.
For that purpose we call $h \in  L^2(\Om) \times L^2(\Om)$ a feasible direction at $\mu = (\mua, \mus) \in \dom$ if there exists some $\eps >0$ such that $\mu  +  \eps h \in \dom$.
The set of all feasible directions at $\mu$ will be denotes by $\dom\skl{\mu}$.
For $\mu \in \dom$ and $h\in \dom\skl{\mu}$ we denote the  one-sided  directional derivative of $\Ho_{\qo,\qi}$ at $\mu$ in direction $h$ by $\Ho_{\qo,\qi}' (\mu)(h)$.
If   $\Ho_{\qo,\qi}' (\mu)(h)$ and $- \Ho_{\qo,\qi}' (\mu)(-h)$ exist and coincide on a dense subset of directions and $h \mapsto \Ho_{\qo,\qi}' (\mu )(h)$ is bounded and linear, we say that $\Ho$ is G\^ataux differentiable at $\mu $ and call $\Ho_{\qo,\qi}' (\mu)$
the G\^ataux derivative of $\Ho_{\qo,\qi}$ at $\mu$.

\begin{proposition}[Differentiability of $\Ho_{\qo,\qi}$]\label{prop:Hdiff}
Suppose $\mu = \skl{\mua, \mus} \in \dom$.
\begin{enumerate}
\item The one-sided directional derivative of $\Ho_{\qo,\qi}$ at  $\mu$ in
 any feasible direction $h = (h_a, h_s) \in   \dom\skl{\mu}$
 exists and is given by
\begin{equation} \label{eq:derH}
\Ho_{\qo,\qi}' \skl{\mu}(h)
= h_a \int_{\sph^{d-1}}\Phi(\edot,\theta)\rmd  \theta
-
\mua\int_{\sph^{d-1}} \Psi(\edot,\theta)\rmd  \theta \,.
\end{equation}
Here $\Phi$ is the  solution of \eqref{eq:strong} and
$\Psi$  satisfies the  RTE
\begin{equation}\label{eq:der}
\left\{
\begin{aligned}
\kl{ \ip{\theta}{\nabla_x}
+
\mua + \mus - \mus\Ko } \Psi
&= \kl{h_a + h_s - h_s\Ko} \Phi
&& \text{in } \Om \times \sph^{d-1}\\
\Psi|_{\Gamma_-}
&=0
&& \text{on } \Gamma_-\,.
\end{aligned}
\right.
\end{equation}

\item If $\mua, \mus > 0$, then $\Ho_{\qo,\qi}$ is G\^ateaux differentiable at $\skl{\mua, \mus}$.
\end{enumerate}
\end{proposition}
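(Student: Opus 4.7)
The plan is to linearize the RTE around $\mu$. For part~(1), fix $\eps > 0$ small enough that $\mu + \eps h \in \dom$ and let $\Phi_\eps$ be the solution of~\eqref{eq:strong} with parameters $\mu + \eps h$; note that feasibility $h \in \dom(\mu)$ automatically forces $h_a, h_s \in L^\infty(\Om)$ via the admissibility inequalities $0 \leq \mua + \eps h_a \leq \barmua$ and $0 \leq \mus + \eps h_s \leq \barmus$. Subtracting the RTE for $\Phi$ from that for $\Phi_\eps$, the difference $\delta_\eps \coloneqq \Phi_\eps - \Phi$ satisfies
\begin{equation*}
(\ip{\theta}{\nabla_x} + \mua + \mus - \mus\Ko)\delta_\eps = -\eps(h_a + h_s - h_s\Ko)\Phi_\eps, \qquad \delta_\eps|_{\Gamma_-} = 0,
\end{equation*}
so Lemma~\ref{lem:exist} yields $\snorm{\delta_\eps}_{W^2} = O(\eps)$. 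Setting $r_\eps \coloneqq \delta_\eps + \eps \Psi$ with $\Psi$ the solution of~\eqref{eq:der}, a direct calculation shows that $r_\eps$ solves the same RTE with source $-\eps(h_a + h_s - h_s\Ko)\delta_\eps$ and vanishing inflow trace; a second application of Lemma~\ref{lem:exist} gives $\snorm{r_\eps}_{W^2} = O(\eps^2)$, so $\delta_\eps/\eps \to -\Psi$ in $W^2$. Continuity of the angular mean $\Phi \mapsto \int_{\sph^{d-1}} \Phi(\,\cdot\,,\theta)\,\rmd\theta$ from $W^2(\Om\times\sph^{d-1})$ to $L^2(\Om)$ transports this to convergence of the angular means in $L^2(\Om)$.

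To finish part~(1), I decompose the difference quotient as
\begin{equation*}
\frac{\Ho_{\qo,\qi}(\mu + \eps h) - \Ho_{\qo,\qi}(\mu)}{\eps}
= \mua \int_{\sph^{d-1}} \frac{\delta_\eps}{\eps}\,\rmd\theta + h_a \int_{\sph^{d-1}} \Phi_\eps\,\rmd\theta,
\end{equation*}
and pass to the limit $\eps \to 0^+$ using the above together with $\Phi_\eps \to \Phi$ in $W^2$, which yields~\eqref{eq:derH}. For part~(2), the right-hand side of~\eqref{eq:derH} depends linearly on $h$ because $\Psi$ does. Interpreting $\mua, \mus > 0$ as essentially bounded away from $0$ and from $\barmua, \barmus$, any $h \in L^\infty(\Om)\times L^\infty(\Om)$ is such that both $+h$ and $-h$ are feasible at $\mu$ for sufficiently small $\eps$, so linearity of~\eqref{eq:derH} forces $\Ho_{\qo,\qi}'(\mu)(h) = -\Ho_{\qo,\qi}'(\mu)(-h)$ on this dense subset of $L^2(\Om)\times L^2(\Om)$. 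Extension to a bounded linear map $L^2(\Om)\times L^2(\Om) \to L^2(\Om)$ then follows from Lemma~\ref{lem:exist} combined with an $L^\infty$-bound on the angular mean of $\Phi$.

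I expect the only technical obstacle to be this last $L^\infty$-bound on $\int_{\sph^{d-1}}\Phi\,\rmd\theta$, which is needed to pair against $h_a \in L^2$ in the second summand of~\eqref{eq:derH} and also to make sense of the source $(h_a + h_s - h_s\Ko)\Phi$ in $L^2(\Om\times\sph^{d-1})$ for merely $L^2$-directions. It does not follow verbatim from Lemma~\ref{lem:exist} but from a companion $L^\infty$-version of well-posedness, valid since $\qi \in L^\infty(\Om\times\sph^{d-1})$ and $\qo \in L^\infty(\Gamma_-, \sabs{\ip{\nu}{\theta}})$ were assumed throughout.
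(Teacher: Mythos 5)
The paper itself offers no proof of this proposition, deferring entirely to the citation \cite{haltmeier2015single}, so there is no in-paper argument to compare against; your proposal supplies the standard linearization proof that that reference carries out, and it is essentially correct. The key steps all check out: feasibility of $h$ at $\mu$ does force $h_a,h_s\in L^\infty(\Om)$, the difference $\delta_\eps=\Phi_\eps-\Phi$ satisfies the RTE with source $-\eps(h_a+h_s-h_s\Ko)\Phi_\eps$ and zero inflow trace, the remainder $r_\eps=\delta_\eps+\eps\Psi$ has source $-\eps(h_a+h_s-h_s\Ko)\delta_\eps$ and hence $O(\eps^2)$ norm by the a-priori estimate of Lemma~\ref{lem:exist} (whose constant is uniform on $\dom$), and the decomposition of the difference quotient then yields \eqref{eq:derH} with the correct sign. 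Your flag of the one genuine technical ingredient not contained in Lemma~\ref{lem:exist} --- an $L^\infty$ bound on $\Phi$ (hence on the fluence) needed both to bound $h_a\fluence$ for $h_a\in L^2$ and to make the source in \eqref{eq:der} square-integrable for $L^2$ directions --- is accurate; this is consistent with the paper's later tacit use of $\Phi\in W^\infty(\Om\times\sph^{d-1})$ for bounded data $\qi,\qo$, and such an $L^\infty$ well-posedness result does hold (Neumann-series/maximum-principle argument as in \cite{EggSch14b}). The only point where you deviate from the statement is in part (2): reading ``$\mua,\mus>0$'' as bounded away from $0$ and from $\barmua,\barmus$ is a strictly stronger hypothesis than stated. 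You do not need the uniform bounds: under $0<\mua<\barmua$ and $0<\mus<\barmus$ a.e., the truncated directions $h^n$ obtained by cutting $h$ off at level $n$ and restricting to the set where $\min(\mua,\barmua-\mua,\mus,\barmus-\mus)\ge 1/n$ are two-sided feasible and dense in $L^2(\Om)\times L^2(\Om)$, which is all the paper's definition of G\^ateaux differentiability requires; note, conversely, that some upper-bound condition is genuinely needed, since two-sided feasible directions must vanish a.e. on the set where $\mua=\barmua$, so the literal hypothesis ``$\mua,\mus>0$'' alone is slightly too weak --- an imprecision inherited from the statement, not introduced by your argument.
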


\begin{proof}
See \cite{haltmeier2015single}.
\end{proof}

\subsection{The wave equation}
\label{sec:wave}

The optical heating  induces an acoustic pressure wave
$p \colon \R^d \times \kl{0, \infty} \to \R$, which satisfies the initial value problem
\begin{equation} \label{eq:wave-fwd}
	\left\{ \begin{aligned}
	 \partial_t^2 p (x,t) - \Delta p(x,t)
	&=
	0 \,,
	 && \text{ for }
	\kl{x,t} \in
	\R^d \times \kl{0, \infty}
	\\
	p\kl{x,0}
	&=
	h(x) \,,
	&& \text{ for }
	x  \in \R^d
	\\
	\partial_t
	p\kl{x,0}
	&=0 \,,
	&& \text{ for }
	x  \in \R^d \,.
\end{aligned} \right.
\end{equation}
For the sake of simplicity in \eqref{eq:wave-fwd} and below  we assume the speed of sound to be constant and rescaled to one. Further, the initial data  $h$ is assumed to be supported in $\Om$.

We suppose that the acoustic measurements are made on a subset $\La \subseteq \mS$, where $D \subseteq \R^d$  is a bounded domain with smooth boundary  such that $D \supseteq  \bar \Om $. The acoustic forward operator corresponding to the measurement set $\La$ is
defined by
\begin{align}\label{eq:waveoperator}
	&\Wo_{\Lambda,T}  \colon
	C_\Om (\R^d)
	\subseteq L^2_\Om (\R^d) \to L^2(\Lambda \times (0, T))
	\colon
	h \mapsto p|_{\Lambda \times (0, T)}
	\,,
\end{align}
where $p \colon  \R^d \times (0, T) \to \R$
denotes the unique solution of \eqref{eq:wave-fwd}.

\begin{lemma} \label{lem:contW}
$\Wo_{\La,T} $ is well  defined and bounded and therefore can be uniquely extended to a bounded linear
operator $\Wo_{\La,T} \colon L^2_\Om (\R^d) \to  L^2\kl{\Lambda \times (0, T)}$.
\end{lemma}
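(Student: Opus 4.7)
The plan is to establish the required bound on the dense subspace $C_\Om(\R^d) \subset L^2_\Om(\R^d)$ and then extend by continuity. For $h\in C_\Om(\R^d)$ the initial datum is smooth and compactly supported, so classical theory for the free-space wave equation yields a unique $C^\infty$ solution $p$ of~\eqref{eq:wave-fwd}. In particular the pointwise restriction $p|_{\La\times(0,T)}$ is a well-defined smooth function and lies in $L^2(\La\times(0,T))$. Once the a-priori estimate $\snorm{\Wo_{\La,T} h}_{L^2(\La\times(0,T))} \leq C \snorm{h}_{L^2(\Om)}$ is proven on this dense subspace, the standard bounded linear extension theorem gives the claimed operator on all of $L^2_\Om(\R^d)$.

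First I would use finite speed of propagation: since $\supp h \subseteq \bar\Om \subset D$, the solution $p(\cdot,t)$ stays supported in a fixed ball $B_R$ for $t\in[0,T]$, with $R$ depending only on $T$ and $\diam D$. This allows me to recast the problem as a wave equation on the bounded domain $B_R$ with zero Dirichlet boundary data, inside which the measurement surface $\mS \supseteq \La$ lies strictly in the interior. The standard energy identity then gives the global bound $\snorm{p(\cdot,t)}_{L^2(\R^d)} \leq \snorm{h}_{L^2(\Om)}$, which is unfortunately insufficient on its own to control a codimension-one trace.

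The main analytic input is therefore the ``hidden regularity'' of the Dirichlet trace of finite-energy wave solutions on a smooth interior hypersurface. I would appeal to multiplier-based trace estimates in the spirit of Lions and Lasiecka--Triggiani, which yield
\[
\snorm{p|_{\mS\times(0,T)}}_{L^2(\mS\times(0,T))} \leq C \snorm{h}_{L^2(\Om)},
\]
using that $\partial_t p(\cdot,0)=0$ and that $\mS$ is non-characteristic for the wave operator. Restricting to $\La \subseteq \mS$ yields the desired estimate. The hard part is precisely this trace bound: the naive Sobolev trace theorem applied to $p(\cdot,t) \in L^2(\R^d)$ misses by half a derivative, and the improved $L^2$ regularity of the trace relies essentially on the hyperbolic (rather than merely elliptic) structure of the equation. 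With the bound in place, continuity and density of $C_\Om(\R^d)$ in $L^2_\Om(\R^d)$ complete the proof.
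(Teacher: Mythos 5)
Your overall frame (prove the a-priori bound on the dense subspace $C_\Om(\R^d)$, then extend by continuity) is the same skeleton any proof must have, and in fact the paper itself gives no argument at all but simply refers to \cite{haltmeier2015single}. The problem is that the decisive step of your proposal --- the estimate $\snorm{p|_{\mS\times(0,T)}}_{L^2(\mS\times(0,T))}\leq C\snorm{h}_{L^2(\Om)}$ --- is precisely the content of the lemma, and the tools you cite do not deliver it. The hidden-regularity theorems of Lions and Lasiecka--Triggiani concern the \emph{Neumann} trace $\partial_\nu u$ on the boundary for solutions with finite-energy data $(u_0,u_1)\in H^1_0\times L^2$ and homogeneous Dirichlet condition (equivalently, by transposition, interior $C([0,T];L^2)$ regularity for the inhomogeneous problem with $L^2$ Dirichlet boundary control). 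They say nothing about the \emph{Dirichlet} trace, on an \emph{interior} hypersurface of your enlarged ball $B_R$, of a solution whose data is merely $(h,0)$ with $h\in L^2$; note also that such data is not finite-energy, so even the phrase ``Dirichlet trace of finite-energy wave solutions'' does not describe your situation. As written, the core analytic input is asserted and attributed to results that do not cover it.

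What is actually needed, and what your trace step never uses, is the geometric separation $\bar\Om\subset D$, i.e.\ the positive distance between $\supp(h)$ and $\La$ (together with the finiteness of $T$). This is what makes an $L^2\to L^2$ bound possible at all: if $\La$ intersected $\supp(h)$, the small-time trace would behave like a restriction of the $L^2$ function $h$ to a hypersurface, which is not controlled by $\snorm{h}_{L^2}$. The standard proofs --- as in \cite{haltmeier2015single}, to which the paper defers --- obtain the bound either from the explicit solution formula (e.g.\ \eqref{eq:sol}) combined with $L^2$ continuity/norm identities for the spherical (circular) mean transform with radii bounded away from zero (cf.\ \cite{FinPatRak04}), or from the microlocal description of $h\mapsto p|_{\mS\times(0,T)}$ as a Fourier integral operator of order zero \cite{SteUhl09}; both routes exploit the separation explicitly. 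A minor additional point: the bound $\snorm{p(\cdot,t)}_{L^2}\leq\snorm{h}_{L^2}$ is not the energy identity (which conserves $\snorm{\nabla p}^2+\snorm{\partial_t p}^2$); it follows from $p(\cdot,t)=\cos\bigl(t\sqrt{-\Delta}\bigr)h$ and the spectral theorem --- harmless, but, as you yourself note, insufficient for the trace. So the proposal has a genuine gap: the key estimate is named but not proved, and the cited machinery does not apply to it.
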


\begin{proof}
See for example \cite{haltmeier2015single}.
\end{proof}

The operator  $\Wo_{\La,T}$ can be evaluated by well-known
solution formulas (see, for example, \cite{Eva98,Joh82}). In
 two spatial dimensions a solution is given by
\begin{equation} \label{eq:sol}
\kl{\Wo_{\La,T} h}
\kl{y,t} =
      \frac{1}{2\pi}
     \frac{\partial}{\partial t}
     \int_0^t
     \int_{\sph^1} h \skl{y +  t \omega}
     \frac{r}{\sqrt{t^2-r^2}}
      \, \rmd \omega \, \rmd r \,,
\end{equation}
Because $\Wo_{\La,T}$ is bounded and linear,
the  adjoint $\Wo_{\La,T}^\ast \colon L^2\kl{\Lambda \times (0, T)} \to L^2_\Om (\R^d)$  is again well defined
and bounded. Explicit expressions of $\Wo_{\La,T}^\ast $  are easily
deduced from explicit expression  for the solution of the wave equation.
For example, in two spatial dimensions we have
\begin{equation} \label{eq:adjoint}
    \kl{\Wo_{\La, T}^* v}\kl{x}
    =
    -\frac{1}{2\pi}
    \int_{\La}
    \int_{\sabs{x-y}}^T
    \frac{\partial_t  v \kl{y,t}}
    {\sqrt{t^2 - \sabs{x-y}^2}}   \, \rmd t  \, \rmd S(y) \,,
\end{equation}
for every $v \in C_c^1(\Lambda \times (0, T) )$.

\subsection{The forward operator in qPAT}
\label{sec:forward-multiple}

As being common in qPAT, we are interested in the case of a  single illumination as well as
multiple illuminations.  Suppose that $\qi_i \in L^\infty\skl{\Om \times \sph^{d-1}}$ and  $\qo_i \in L^\infty\kl{\Gamma_- , \sabs{\ip{\nu}{\theta}}}$,
for $i = 1, \dots , N$, denote given source patterns and boundary light sources, respectively, where
$N$ is the number of different  illumination patterns. The case $N=1$ corresponds to a
single illumination. We further denote by $\La_i$ the surface where the acoustic measurements
 for the $i$-th illumination are made and $T_i$ the measurement duration.
 Let us mention that multiple illuminations are often implemented in PAT
  (see, e.g.,~\cite{ermilov2016three} for such an experimental setup).
 Single-stage qPAT with multiple illuminations has been studied in \cite{ding2015one,gao2015limited,haltmeier2015single,bal2016photo,lou2016impact}. Moreover, multiple illuminations have been proposed in several mathematical works to stabilize various inverse problems with internal data (e.g., \cite{bal2011quantitative,BalRen11,bal2013linearized,bal2014local,bal2014hybrid,kuchment2015stabilizing,montalto2013stability,widlak2015stability}).

For each illumination and measurement, we denote by
$\Ho_i  \coloneqq \Ho_{\qo_i, \qi_i}$ and $\Wo_i  \coloneqq  \Wo_{\La_i,T}$ the corresponding heating and acoustic operator and by $\Fo_i = \Wo_i \circ \Ho_i$ the resulting forward operator.
The total forward operator in qPAT allowing multiple illuminations is  then given by
\begin{equation}
\Fo = (\Fo_i)_{i=1}^N \colon
\dom \to  \kl{ L^2(\Lambda \times (0, T)) }^N
\end{equation}
From Lemmas~\ref{lem:contH}  and~\ref{lem:contW} it follows that the
forward operator   $\Fo$ is  well  defined, Lipschitz-continuous and weakly continuous.

\begin{proposition}[Differentiability of $\Fo$]\label{prop:Mdiff}
Let $\mu = (\mua, \mus) \in \dom$.
\begin{enumerate}
\item The one-sided directional derivative of $\Fo$ at  $\mu$ in
 any feasible direction $h = (h_a, h_s) \in   \dom\skl{\mu}$
 exists and, with $\Ho_i'$ as in Proposition~\ref{prop:Hdiff},
 is given by  \begin{equation} \label{eq:Mdiff}
	\Fo'(\mu)
	=  (\Wo_i \circ \Ho_i'(\mu))_{i=1}^N \,.
\end{equation}

\item If $\mua, \mus > 0$, then $\Fo$ is G\^ateaux differentiable at $\mu$.
\end{enumerate}
\end{proposition}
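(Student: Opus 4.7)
The plan is to combine Proposition~\ref{prop:Hdiff} (differentiability of each heating operator $\Ho_i$) with Lemma~\ref{lem:contW} (boundedness and linearity of each wave operator $\Wo_i$) via a chain-rule argument, and then assemble the $N$ components. Since $\Fo = (\Fo_i)_{i=1}^N$ with $\Fo_i = \Wo_i \circ \Ho_i$, and one-sided directional derivatives and G\^ateaux derivatives are both computed componentwise in the product space $\bigl(L^2(\Lambda\times(0,T))\bigr)^N$, it suffices to handle each $\Fo_i$ individually and then collect them into the tuple $(\Wo_i \circ \Ho_i'(\mu))_{i=1}^N$.

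For part (a), I would fix $\mu \in \dom$ and a feasible direction $h \in \dom(\mu)$, pick $\eps_0 > 0$ with $\mu + \eps h \in \dom$ for $0 < \eps < \eps_0$, and write the difference quotient
\begin{equation*}
\frac{\Fo_i(\mu + \eps h) - \Fo_i(\mu)}{\eps}
=
\Wo_i\!\left( \frac{\Ho_i(\mu + \eps h) - \Ho_i(\mu)}{\eps} \right),
\end{equation*}
using linearity of $\Wo_i$. By Proposition~\ref{prop:Hdiff}(a), the argument converges in $L^2(\Om)$ to $\Ho_i'(\mu)(h)$ as $\eps \to 0^+$. Boundedness of $\Wo_i \colon L^2_\Om(\R^d) \to L^2(\Lambda_i\times(0,T))$ from Lemma~\ref{lem:contW} then lets me pass the limit through $\Wo_i$, yielding
\begin{equation*}
\lim_{\eps \to 0^+} \frac{\Fo_i(\mu + \eps h) - \Fo_i(\mu)}{\eps}
= \Wo_i\bigl(\Ho_i'(\mu)(h)\bigr)
\quad \text{in } L^2(\Lambda_i \times (0,T)).
\end{equation*}
Stacking over $i = 1,\dots,N$ gives the claimed formula \eqref{eq:Mdiff}.

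For part (b), assume $\mua, \mus > 0$. Proposition~\ref{prop:Hdiff}(b) supplies, for each $i$, a dense set of directions on which $\Ho_i'(\mu)(h)$ and $-\Ho_i'(\mu)(-h)$ coincide, together with boundedness and linearity of $h \mapsto \Ho_i'(\mu)(h)$. Composing with the bounded linear operator $\Wo_i$ preserves each of these properties: linearity and boundedness transfer immediately, and equality on the dense set of directions is preserved because $\Wo_i$ is a well-defined map. The intersection of the $N$ dense sets is again dense, so $\Fo$ is G\^ateaux differentiable at $\mu$ with derivative $(\Wo_i \circ \Ho_i'(\mu))_{i=1}^N$.

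There is no real obstacle here; the statement is essentially a chain rule for the composition of a one-sided differentiable nonlinear map with a bounded linear one, plus a componentwise assembly. The only point requiring a bit of care is making sure that the convergence of the difference quotients for $\Ho_i$ takes place in the space $L^2_\Om(\R^d)$ on which $\Wo_i$ is bounded; this is immediate since $\Ho_i$ takes values in $L^2(\Om) \hookrightarrow L^2_\Om(\R^d)$ by zero extension, so Lemma~\ref{lem:contW} applies without modification.
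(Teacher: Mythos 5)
Your argument is correct and is exactly the route the paper takes: its proof of this proposition is the one-line citation of Proposition~\ref{prop:Hdiff} and Lemma~\ref{lem:contW}, and your write-up simply fills in the chain-rule details (passing the difference quotient through the bounded linear $\Wo_i$ and assembling componentwise). No gaps worth flagging.
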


\begin{proof}
Follows from Proposition~\ref{prop:Hdiff} and
Lemma \ref{lem:contW}.
\end{proof}

The derivative $\Fo'(\mu)$ is the linearized forward operator in qPAT
that  we will analyze in the following.

\section{Analysis of the linearized inverse problem}
\label{sec:stability}

In this section we study  uniqueness and stability of the problem of inverting
$\Fo'(\mua,\mus)$, where  $(\mua^\lin,\mus^\lin) \in \dom$
is a fixed pair of background optical absorption and scattering coefficients.

We denote by $\Vo \colon  W^\infty(\Om\times \sph^{d-1})
\to L^\infty(\Om \times \sph^{d-1})$ the transport operator defined  by
$\Vo\Phi  \coloneqq  \kl{ \ip{\theta}{\nabla_x} + \mua} \Phi$ and
write $\Vo_0$ for the restriction to $W^\infty_0(\Om\times \sph^{d-1})$.
Then, $\Vo_0$ is invertible and its inverse is given by
\begin{equation} \label{eq:inv-V}
(\Vo_0^{-1} \Psi)(x,\theta) = \int_0^{\ell(x,\theta)} e^{-\int_0^t \mua(x- \tau \theta) \, \rmd \tau}\, \Psi(x-t \theta,\theta) \, \rmd t \,.
\end{equation}
Here, $\ell(x,\theta)$ is the supremum over all $s>0$ such that $x-s\theta \in \Om$. That is, $\ell(x,\theta)$ is the distance from $x$ to the boundary $\partial \Om$ along the direction $-\theta$.
It is easy to see that $\Vo_0^{-1}$ is a bounded operator when considered as mapping
from $L^\infty(\Om \times \sph^{d-1})$ into itself and  satisfies
\begin{equation}\label{E:norm}
	\snorm{\Vo_0^{-1}}_{L^\infty, L^\infty} \leq \diam(\Om) \,.
\end{equation}

\subsection{An auxiliary result}

The following result plays a key role in our
subsequent analysis.

\begin{proposition}\label{prop:PDO}
Suppose $a  \in C^\infty\skl{\Om\times \sph^{d-1} \times [0,\infty)}$ is compactly supported with respect to the last variable $t$. For each $q \in C_c^\infty(\Om)$, we define
\begin{equation*}
P(\qi)(x)  \coloneqq  \int_0^\infty \int_{\sph^{d-1}} a(x,\theta,t) \, \qi(x- t\theta)
\rmd \theta \,  \rmd t
\end{equation*}
Then, $P$ extends to a pseudodifferential operator of order at most $-1/2$ on $\Om$.
\end{proposition}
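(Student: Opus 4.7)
My plan is to exhibit the full left symbol of $P$ and show it lies in the H\"ormander class $S^{-1/2}_{1,0}$. First, the change of variables $y = x - t\theta$ (with Jacobian $t^{d-1}$) converts $P$ into an integral operator with kernel
\begin{equation*}
K(x,y) = \frac{a\kl{x, (x-y)/|x-y|, |x-y|}}{|x-y|^{d-1}},
\end{equation*}
which is smooth off the diagonal, compactly supported near it by the $t$-support of $a$, and carries a locally integrable conormal singularity at $\{x = y\}$. Fourier inversion in $z = x - y$ then displays $P$ as an operator with left symbol
\begin{equation*}
p(x,\xi) = \int_0^\infty \int_{\sph^{d-1}} a(x, \theta, t) \, e^{-\imi t \ip{\xi}{\theta}} \rmd \theta \, \rmd t,
\end{equation*}
so it remains to verify that $p$ satisfies the symbol estimates of $S^{-1/2}$.

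Second, I would obtain the $\alpha = \beta = 0$ decay by writing $\xi = r\eta$ with $r = |\xi|$ and analysing the inner spherical integral $I(x, r\eta, t) = \int_{\sph^{d-1}} a(x,\theta,t) e^{-\imi rt \ip{\eta}{\theta}} \rmd \theta$. The phase $\theta \mapsto \ip{\eta}{\theta}$ on $\sph^{d-1}$ has only the two non-degenerate critical points $\theta = \pm \eta$, so the method of stationary phase gives $|I(x, r\eta, t)| \leq C(1 + rt)^{-(d-1)/2}$ uniformly, with derivatives of $a$ absorbed into the constant. Integrating against the compact $t$-support of $a$ then yields $|p(x,\xi)| \leq C(1 + |\xi|)^{-1/2}$ uniformly in $d \geq 2$ (the borderline case $d = 2$ saturating the exponent $-1/2$, while higher dimensions give strictly faster decay).

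The main obstacle is the full derivative estimate $|\partial_\xi^\alpha \partial_x^\beta p(x,\xi)| \leq C_{\alpha, \beta}(1 + |\xi|)^{-1/2 - |\alpha|}$, since na\"ive differentiation under the integral only inserts a factor of $t^{|\alpha|}$ into the amplitude and supplies no extra $|\xi|^{-|\alpha|}$ decay. To handle this I would Taylor expand $a(x, \theta, t) = \sum_{k=0}^{N-1} a_k(x, \theta) t^k + t^N r_N(x, \theta, t)$ at $t = 0$ and treat each summand. The contribution of $a_k t^k$ to $p$ is the Fourier transform in $z$ of the homogeneous distribution $a_k(x, z/|z|) |z|^{k - (d-1)}$, which by the classical Fourier theory of homogeneous distributions is itself homogeneous in $\xi$ of degree $-(1+k)$ with smooth $\xi/|\xi|$-profile, and hence belongs to $S^{-1-k}_{\mathrm{cl}}$ modulo Schwartz corrections from the $|z|$-cutoff. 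The remainder $t^N r_N$ produces a kernel of regularity $C^{N-(d-1)}$ across the diagonal, which is smoothing of arbitrary prescribed order once $N$ is chosen sufficiently large. Consequently $p$ admits a classical asymptotic expansion $p \sim \sum_{k \geq 0} p_k$ with $p_k \in S^{-1-k}$, so $P$ is a classical pseudodifferential operator of order $-1$, and in particular of order at most $-1/2$ as claimed.
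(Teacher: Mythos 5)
Your proposal is correct in outline, but it follows a genuinely different route from the paper and in fact proves more. The paper works directly on the symbol $b(x,\xi)=\int_{\sph^{d-1}}\int_0^\infty a(x,\theta,t)\,e^{-\imi t\sinner{\theta}{\xi}}\,\rmd t\,\rmd\theta$: it splits the sphere with a cutoff at $\sabs{\sinner{\xi}{\theta}}\sim\sabs{\xi}^{1/2}$, gains $\sabs{\xi}^{-1/2}$ from one integration by parts in $t$ on the non-glancing region and from the $O(\sabs{\xi}^{-1/2})$ measure of the glancing region, and gets the $\xi$-derivative bounds by integrating by parts with polar-coordinate vector fields; this is elementary and self-contained but only yields order $-1/2$. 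You instead pass to the Schwartz kernel $a\bigl(x,(x-y)/\sabs{x-y},\sabs{x-y}\bigr)\sabs{x-y}^{-(d-1)}$, Taylor-expand the amplitude at $t=0$, and recognize each term as a homogeneous distribution of degree $k-(d-1)>-d$ in $z=x-y$, whose Fourier transform is homogeneous of degree $-(k+1)$ and smooth away from $\xi=0$ (no logarithmic terms arise at these degrees), with a remainder whose kernel has roughly $N-d$ continuous derivatives. This conormal viewpoint buys a strictly stronger conclusion — $P$ is a classical pseudodifferential operator of order $-1$ — from which the stated order $-1/2$ follows a fortiori; the extra half order comes from the oscillation in $t$ that the paper's (and your second paragraph's) absolute-value estimate discards. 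Two steps should be made explicit to close the sketch: (i) the ``Schwartz corrections from the $\sabs{z}$-cutoff'' are justified by noting that $(1-\chi(\sabs{z}))a_k(x,z/\sabs{z})\sabs{z}^{k-d+1}$ is a symbol in $z$, so its Fourier transform is rapidly decreasing, with all derivatives, on $\sabs{\xi}\geq 1$; and (ii) your control of the remainder is only a sup-norm bound $O(\sabs{\xi}^{-(N-d)})$, so to obtain the full symbol estimates you must either estimate derivatives of the remainder symbol directly (straightforward, with decay reduced by the number of $\xi$-derivatives) or invoke the standard lemma that sup-norm remainder bounds of increasingly negative order, combined with a priori boundedness of all derivatives of $p$ (immediate here by differentiating under the integral, since $a$ has compact $t$-support), imply the asymptotic expansion in the symbol topology. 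Finally, your remark that $d=2$ ``saturates'' the exponent $-1/2$ is misleading: as your own third paragraph shows, the actual decay is $\sabs{\xi}^{-1}$, and the stationary-phase estimate of your second paragraph is superseded by it.
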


\begin{proof} We have
 $\qi(x- t \theta) = (2 \pi)^{-d} \int_{\R^d} \int_{\R^d} e^{\imi\left<x-t \theta-y,\xi\right>} \qi(y) \, \rmd y \, \rmd \xi$.  Therefore,
\begin{align*}
P(\qi)(x)
&=\int_0^\infty \int_{\sph^{d-1}} a(x,\theta,t) \, \qi(x- t\theta) \rmd \theta \,  \rmd t\\
&=\frac{1}{(2 \pi)^d}\int_0^\infty \int_{\sph^{d-1}} a(x,\theta,t) \, \int_{\R^d} \int_{\R^d} e^{\imi \left<x-t \theta-y,\xi\right>} \qi(y) \, \rmd y \, \rmd \xi \rmd \theta \,  \rmd t\\
&= \frac{1}{(2 \pi)^d} \int_{\R^d}\int_{\R^d} b(x,\xi) e^{\imi\left<x-y,\xi\right>} \qi(y)
\, \rmd y \, \rmd \xi \,,
\end{align*}
where
$
b(x,\xi) \coloneqq \int_{\sph^{d-1}}  \int_0^\infty  a(x,\theta,t) e^{- \imi t \sinner{\theta}{\xi}}  \, \rmd t \, \rmd \theta \,.
$

We prove  that $(x,\xi) \mapsto b(x,\xi)$ is a symbol of order at most $-1/2$.
For that purpose, let $0 \leq \chi \in C^\infty_0(\mathbb{R}) \leq 1$ be a cut-off function that is equal to $1$ on $[-1,1]$ and zero outside of $[-2,2]$. Then
\begin{multline} \label{eq:I12}
b(x,\xi) = \int_{\sph^{d-1}}  \int_0^\infty \left(1-\chi \Big(\frac{\sinner{\xi}{\theta}}{\sabs{\xi}^{1/2}} \Big) \right) \,  a(x,\theta,t) e^{- \imi t \sinner{\theta}{\xi}}  \, \rmd t \, \rmd \theta
\\ + \int_{\sph^{d-1}}  \int_0^\infty \chi \Big(\frac{\sinner{\xi}{\theta}}{\sabs{\xi}^{1/2}}\Big)  \,  a(x,\theta,t) e^{- \imi t \sinner{\theta}{\xi}}  \, \rmd t \, \rmd \theta  =: I_1(x,\xi) + I_2(x,\xi).
\end{multline}
To estimate  $I_1(x,\xi)$, we write
\begin{align*}
I_1(x,\xi)  &=
\int_{\sph^{d-1}}  \int_0^\infty \left(1-\chi \left(\frac{\sinner{\xi}{\theta}}{\sabs{\xi}^{1/2}}\right) \right) \,  a(x,\theta,t) e^{- \imi t \sinner{\theta}{\xi}}  \, \rmd t \, \rmd \theta
\\ &=
\int_{\sph^{d-1}}  \left(1-\chi \left(\frac{\sinner{\xi}{\theta}}{\sabs{\xi}^{1/2}}\right) \right) \,\frac{1}{\imi \sinner{\theta}{\xi} } \Bigl[ a(x,\theta,0)  +  \int_0^\infty a'(x,\theta,t) e^{- \imi t \sinner{\theta}{\xi}}  \, \rmd t  \Bigr] \, \rmd \theta
\\ &=
\int_{\sph^{d-1}, |\sinner{\xi}{\theta}| \geq \sabs{\xi}^{1/2}}
\frac{1-\chi\bigl(\frac{\sinner{\xi}{\theta}}{\sabs{\xi}^{1/2}}\bigr) }{\imi \sinner{\theta}{\xi} } \Bigl[ a(x,\theta,0)  +  \int_0^\infty a'(x,\theta,t) e^{- \imi t \sinner{\theta}{\xi}}  \, \rmd t \Bigr] \, \rmd \theta \,,
\end{align*}
and therefore  $ \abs{I_1(x,\xi)} \leq C \sabs{\xi}^{-1/2}$ for   $\sabs{\xi} \geq 1$.
To estimate $I_2(x,\xi)$, note that  the set $\{\theta \in \sph^{d-1} \mid  \sabs{\sinner{\xi}{\theta}} \leq 2 \sabs{\xi}^{1/2}\}$ has measure proportional
to  $\sabs{\xi}^{-1/2}$ which shows
$$\abs{I_2(x,\xi)} = \Bigl|\int_{\sph^{d-1}}  \int_0^\infty
\chi\kl{\frac{\sinner{\xi}{\theta}}{\sabs{\xi}^{1/2}}}  \,  a(x,\theta,t) e^{- \imi t \sinner{\theta}{\xi}}  \, \rmd t \, \rmd \theta \Bigr|  \leq  C\, \sabs{\xi}^{-\frac{1}{2}}  \; \text{ for   }
\sabs{\xi} \geq 1  \,.$$
Together with \eqref{eq:I12}, we  obtain
$|b(x,\xi)|  \leq C \sabs{\xi}^{-\frac{1}{2}}$ for   $\sabs{\xi} \geq 1$.

\medskip
Next we estimate $\partial_{\xi_j} b(x,\xi) = -\imi\,  \int_{\sph^{d-1}}  \int_0^\infty  [t \theta_j a(x,\theta,t) ] e^{- \imi t \sinner{\theta}{\xi}}  \, \rmd t \, \rmd \theta$.  We consider the case  $d=2$ only, the proof for general dimension   follows in a similar manner.
Let us write $\theta = (\cos (\phi), \sin (\phi))$. Then, using  the expression of the gradient operator in polar coordinates, we obtain
\begin{equation*}
 \biggl[\xi_1 \kl{ \cos (\phi) \partial_t - \frac{\sin (\phi)}{t}  \partial_\phi  } +
 \xi_2 \kl{ \sin (\phi) \partial_t + \frac{\cos (\phi)}{t} \partial_\phi }
 \biggr]\, e^{- \imi t \sinner{\theta}{\xi}}
 = -\imi \, \sabs{\xi}^2 \, e^{- \imi t \sinner{\theta}{\xi}}
\end{equation*}
Together with one integration by parts this shows
\begin{align*}
\partial_{\xi_j} b(x,\xi)
&=
\frac{1}{\sabs{\xi}^2}  \,
\int_{\sph^{d-1}}  \int_0^\infty  t \theta_j a(x,\theta,t)
 \, \biggl[
 \xi_1 \kl{ \cos (\phi) \partial_t - \frac{\sin (\phi)}{t}  \partial_\phi  }
 \\
 & \hspace{0.25\textwidth}+
 \xi_2 \kl{ \sin (\phi) \partial_t + \frac{\cos (\phi)}{t} \partial_\phi }  \biggr] \, e^{- \imi t \sinner{\theta}{\xi}}  \, \rmd t \, \rmd \theta
 \\
  &=-\frac{1}{\sabs{\xi}^2}  \,  \int_{\sph^{d-1}}  \int_0^\infty
  e^{- \imi t \sinner{\theta}{\xi}}
  \biggl[
  \xi_1 \kl{ \cos (\phi) \partial_t - \frac{\sin (\phi)}{t}  \partial_\phi  }
   \\
 & \hspace{0.2\textwidth}+
  \xi_2 \kl{ \sin (\phi) \partial_t + \frac{\cos (\phi)}{t} \partial_\phi }
  \biggr] \, \kl{ t \theta_j a(x,\theta,t)  }   \, \rmd t \, \rmd \theta\,.
  \end{align*}
Using that the functions
$ (  \cos (\phi) \partial_t -\sin (\phi) t^{-1}  \partial_\phi  )(t \theta_j a) $,
$  (\sin (\phi) \partial_t +  \cos (\phi) t^{-1} \partial_\phi )(t \theta_j a)$ are contained $ C^\infty \skl{\Om \times \sph^{d-1} \times [0,\infty)}$ and repeating the argument above, we  conclude that
$\sabs{\partial_{\xi_j} b(x,\xi) } \leq C \sabs{\xi}^{-3/2}$.
Finally, in a similar manner one verifies $\sabs{\partial^\beta_{x} \partial^\alpha_{\xi} b(x,\xi) } \leq C_{\al,\beta} \sabs{\xi}^{-1/2+|\alpha|} $ for all  $\al, \beta  \in \N^d$, which
concludes the proof.
\end{proof}

\subsection{Vanishing scattering}

In this subsection we assume zero scattering, and consider a single illumination $N=1$.
For given  $\mua > 0 $ we write $\Do = \Ho'(\mua,0)( \edot,0)$ for the linearized partial forward operator. It is given by (see Proposition~\ref{prop:Hdiff})
\begin{equation} \label{eq:lin-0}
\Do(h_a)(x)
= h_a(x) \int_{\sph^{d-1}}\Phi(x,\theta)\rmd \, \theta - \mua(x) \, \int_{\sph^{d-1}} \Psi (x,\theta)\rmd  \theta \,,
\end{equation}
where $\Phi$ and $\Psi$ satisfy the following background and linearized problem
\begin{align}\label{eq:bac}
&  \Vo \Phi  = \qi
&&  \text{such that } \Phi|_{\Gamma_-} = \qo \,,
\\ \label{eq:lin}
& \Vo  \Psi  = \Vo_0  \Psi  = h_a \Phi
&&  \text{such that } \Psi|_{\Gamma_-} =0 \,,
\end{align}
respectively.   According to \eqref{eq:inv-V}, the solution of \eqref{eq:lin} equals
\begin{equation} \label{eq:Psi}
\Psi(x,\theta)  = \Vo_0^{-1}(h_a \Phi)(x,\theta)=\int_0^{\ell(x,\theta)} (h_a \Phi)(x-t \theta,\theta) e^{- \int_0^t \mua(x- \tau \theta) \rmd \tau} \rmd t \,.
\end{equation}
For notational conveniences, we denote by $\fluence \coloneqq \int_{\sph^{d-1}} \Phi(\edot,\theta) \, \rmd \theta$ the background fluence.  Further, let us fix a domain  $\Om_0 \Subset \Om$. We will always assume that $\supp(h_a) \subseteq \Om_0$.

We will also study the linearized forward operator $\Fo'(\mua,0)(\edot,0) = \Wo \Do$,
 where $\Wo = \Wo_{\La, T} $ is the
solution operator of the wave equation for the measurement set $\La \subseteq \partial D$
and measurement time $T>0$; see Subsection~\ref{sec:wave}.
For that purpose we recall the visibility condition \ref{A:Visible} for the wave inversion problem,
which states that  any line through $x \in \Om$ intersects  $\Lambda$ at a point of distance
less than $T$ from $x$.

\begin{lemma} \label{lem:psi} If $\Phi \in C^\infty(\overline \Om \times \sph^{d-1})$, then   $h_a \to \int_{\sph^{d-1}} \Psi (\edot,\theta)\rmd  \theta $
is a pseudo-differential operator of order at most $-1/2$ on $\Om_0$.
\end{lemma}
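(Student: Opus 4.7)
The plan is to reduce to Proposition~\ref{prop:PDO} by exhibiting the operator as an integral of the form $\qi \mapsto \int_0^\infty \int_{\sph^{d-1}} a(x,\theta,t)\, \qi(x-t\theta)\, \rmd\theta\, \rmd t$ with a smooth, $t$-compactly supported amplitude $a$. Starting from the explicit formula \eqref{eq:Psi} and Fubini, I would write for $x \in \Om_0$
\begin{equation*}
\int_{\sph^{d-1}} \Psi(x,\theta)\,\rmd\theta
= \int_{\sph^{d-1}}\int_0^{\ell(x,\theta)} \Phi(x-t\theta,\theta)\, e^{-\int_0^t \mua(x-\tau\theta)\,\rmd\tau}\, h_a(x-t\theta)\,\rmd t\,\rmd\theta \,.
\end{equation*}

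The obstacle in applying Proposition~\ref{prop:PDO} directly is that the upper limit $\ell(x,\theta)$ is only Lipschitz, so the indicator $\chi_{[0,\ell(x,\theta)]}(t)$ is not smooth in $(x,\theta,t)$. The key observation that resolves this is the hypothesis $\supp(h_a) \subseteq \Om_0 \Subset \Om$: if $\zeta \in C_c^\infty(\Om)$ is chosen with $\zeta \equiv 1$ on a neighborhood of $\overline{\Om_0}$, then $h_a = \zeta h_a$, so I may insert a factor $\zeta(x-t\theta)$ into the integrand without changing its value. Since $\zeta$ vanishes in a neighborhood of $\partial\Om$, the integrand now vanishes before $t$ reaches $\ell(x,\theta)$, and the upper limit may be harmlessly replaced by $+\infty$.

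Next I would set
\begin{equation*}
a(x,\theta,t) \coloneqq \zeta(x-t\theta)\,\widetilde{\Phi}(x-t\theta,\theta)\, \exp\Bigl(-\int_0^t \widetilde{\mua}(x-\tau\theta)\,\rmd\tau\Bigr),
\end{equation*}
where $\widetilde{\Phi}$ and $\widetilde{\mua}$ are smooth extensions of $\Phi$ and $\mua$ to $\R^d$ (the former exists by the assumption $\Phi \in C^\infty(\overline\Om\times\sph^{d-1})$ via a Seeley-type extension, the latter because $\mua$ is smooth in the present setting). Because $\zeta$ has compact support inside $\Om$, the factor $\zeta(x-t\theta)$ makes $a$ vanish for all $(x,\theta,t)$ with $t$ exceeding a uniform bound (essentially $\diam(\Om)$) as long as $x$ remains in the bounded set $\Om_0$; thus $a \in C^\infty(\Om \times \sph^{d-1} \times [0,\infty))$ and is compactly supported in $t$. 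With this choice the above integral becomes exactly
\begin{equation*}
\int_0^\infty \int_{\sph^{d-1}} a(x,\theta,t)\, h_a(x-t\theta)\,\rmd\theta\,\rmd t ,
\end{equation*}
which is the operator $P$ from Proposition~\ref{prop:PDO} applied to $h_a$.

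Finally, invoking Proposition~\ref{prop:PDO} yields that the map $h_a \mapsto \int_{\sph^{d-1}} \Psi(\cdot,\theta)\,\rmd\theta$ extends to a pseudodifferential operator of order at most $-1/2$ on $\Om_0$. I expect the only delicate step to be the smoothness/compact-support verification of $a$: one must make sure the cutoff $\zeta$ is chosen so that for every $x \in \Om_0$ and every $\theta \in \sph^{d-1}$ the ray $t \mapsto x-t\theta$ leaves $\supp(\zeta)$ in a uniform way, which is where $\Om_0 \Subset \Om$ and the convexity/boundedness of $\Om$ enter.
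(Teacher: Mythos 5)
Your proposal is correct and follows essentially the same route as the paper: insert a cutoff $\varphi\in C_c^\infty(\Om)$ equal to $1$ on $\Om_0$ (using $\supp(h_a)\subseteq\Om_0$) to replace the upper limit $\ell(x,\theta)$ by $\infty$, recognize the resulting amplitude $a(x,\theta,t)=\varphi(x-t\theta)\,\Phi(x-t\theta,\theta)\,e^{-\int_0^t\mua(x-\tau\theta)\,\rmd\tau}$ as smooth and compactly supported in $t$, and invoke Proposition~\ref{prop:PDO}. Your extra care about smooth extensions of $\Phi$ and $\mua$ and the uniform exit of the ray from $\supp(\zeta)$ only makes explicit what the paper's proof leaves implicit.
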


\begin{proof}
Let  $\varphi \in C_0^\infty(\Om)$ be such that $\varphi = 1$
in $\Om_0$. Then $h\varphi=h$, and according to \eqref{eq:Psi} we have
$$\int_{\sph^{d-1}} \Psi(x,\theta) \,\rmd \theta =  \int_0^\infty \int_{\sph^{d-1}} a(x,\theta,t) \, h_a(x- t\theta) \rmd t \, \rmd \theta,$$
with  $a(x,\theta,t)  \coloneqq  \varphi(x- t \theta) \, \Phi(x-t \theta,\theta) \, e^{- \int_0^t \mua(x- \tau \theta) \rmd \tau}$.
Together with  Proposition~\ref{prop:PDO}  this implies that
$h_a \to \int_{\sph^{d-1}} \Psi(\edot,\theta) \, \rmd \theta$ is a pseudo-differential operator
of order at most $-1/2$ and concludes the proof.
\end{proof}

Lemma~\ref{lem:psi} in particular implies that $h_a \to \int_{\sph^{d-1}} \Psi (x,\theta)\rmd  \theta$ is boundedly maps $L^2 \skl{\Om}$ to the Sobolev space $H^{1/2} \skl{\Om}$. Such  weaker result also follows from  the averaging  lemma,
which states that the averaging operator
$W^2 \skl{\Om \times \sph^{d-1}} \to H^{1/2} \skl{\Om}
\colon F \mapsto  \int_{\sph^{d-1}} F(\edot ,\theta)  \rmd \theta$  is bounded~\cite{devore2001averaging,golse1988regularity,Mok97}.
From the stronger (localized) statement of Lemma~\ref{lem:psi}, we can infer that $h_a \mapsto \Do h_a  = h_a  \fluence  - \mua  \, \int_{\sph^{d-1}} \Psi (\edot,\theta)\rmd  \theta$ is a pseudo-differential operator of
order $0$ and principal symbol $\fluence$.
Moreover, if the background fluence $\fluence$ is positive on $\overline \Om_0$ one concludes the following.

\begin{theorem} \label{thm:no-scattering}
Suppose $\fluence>0 $ on $\overline \Om_0$.
Then  the following hold:
\begin{enumerate}
\item \label{thm:no-scattering0}
$\Do$ is an elliptic pseudodifferential operator of order zero and Fredholm.

\item \label{thm:no-scattering1}
$\wf(h) \cap \cT^* \Om_0 =  \wf( \Do (h))  \cap \cT^* \Om_0 $.

\item \label{thm:no-scattering2}
$\dim \kl{\ker (\Do) } <\infty$.

\item \label{thm:no-scattering3}
 $\ker( \Do)   \subseteq C^\infty(\Om)$.

\item \label{thm:no-scattering4}
If, additionally, the visibility condition~\ref{A:Visible} holds, then
\begin{itemize}
\item
$\wf( \Ao \, \chi_{\La, T} \Wo \Do (h))  \cap \cT^* \Om_0=\wf(h) \cap \cT^* \Om_0$
\item
$\dim \kl{\ker ( \Ao \, \chi_{\La, T} \Wo \Do) }<\infty$
\item
$\ker(\Ao \, \chi_{\La, T} \Wo \Do) \subseteq C^\infty(\Om) $
\end{itemize}
Here $\Ao$ is the time reversal operator (defined by  \eqref{eq:wave-back}),
 and $\chi_{\La,T} \in C^\infty(\mS \times [0,\infty))$ is a nonnegative function with
$\supp(\chi_{\La,T}) = \Lambda \times [0,T]$.
\end{enumerate}

\end{theorem}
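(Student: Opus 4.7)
My plan is to show that $\Do$ is an elliptic classical pseudodifferential operator of order zero on $\Om_0$ with principal symbol equal to the background fluence $\fluence$, after which parts (a)--(d) will follow by standard $\Psi$DO machinery. Starting from the expression \eqref{eq:lin-0}, the multiplication operator $h_a \mapsto \fluence\, h_a$ is a $\Psi$DO of order zero with symbol $\fluence(x)$, and by Lemma~\ref{lem:psi}, the remaining term $h_a \mapsto \mua \int_{\sph^{d-1}} \Psi(\cdot,\theta)\, \rmd\theta$ is a $\Psi$DO of order at most $-1/2$ on $\Om_0$. Hence $\Do$ is a $\Psi$DO of order zero whose principal symbol coincides with $\fluence$; the hypothesis $\fluence > 0$ on $\overline{\Om}_0$ yields ellipticity, which proves (a).

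From ellipticity I would read off (b)--(d) via the usual parametrix construction. An elliptic parametrix $\Qo$ of order zero satisfies $\Qo \Do \equiv \mathrm{Id}$ modulo smoothing on $\cT^* \Om_0$, which immediately gives the wavefront equality $\wf(h) \cap \cT^* \Om_0 = \wf(\Do h) \cap \cT^* \Om_0$ of (b). Restricting to functions supported in $\Om_0$, the Fredholm theory for elliptic $\Psi$DOs yields $\dim \ker(\Do) < \infty$ in (c), while elliptic regularity forces any $h \in \ker(\Do)$ with $\supp(h) \subseteq \Om_0$ to be smooth, giving (d).

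For (e), the visibility condition \ref{A:Visible} allows me to invoke the microlocal wave-inversion result of Stefanov and Uhlmann \cite{SteUhl09}: under \ref{A:Visible}, the composition $\Ao\, \chi_{\La,T}\, \Wo$ is a pseudodifferential operator of order zero that is elliptic on $\cT^* \Om_0$, since the visibility assumption ensures that both bicharacteristics emanating from each $(x,\xi) \in \cT^* \Om_0$ reach $\supp(\chi_{\La,T}) \subseteq \La \times [0,T]$. Composing with the already elliptic $\Do$ produces an elliptic $\Psi$DO of order zero on $\cT^* \Om_0$, and then the three bullet points in (e) follow from exactly the same parametrix, Fredholm, and elliptic-regularity arguments used for (b)--(d).

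The main obstacle I anticipate is the careful microlocal bookkeeping at the interface between the two elliptic pieces: specifically, verifying that the product of principal symbols genuinely does not vanish on $\cT^* \Om_0$, so that ellipticity survives the composition, and confirming that the bounded-domain Fredholm theory applies (most cleanly by extending $\Do$ to a $\Psi$DO on a compact ambient manifold, e.g.\ a torus containing $\Om$, or by truncating with a cutoff supported near $\overline{\Om}_0$). A subsidiary technical point is checking that the $-1/2$-order term contributed by Lemma~\ref{lem:psi} really enters only in subprincipal order, so that the principal symbol remains $\fluence$ and the ellipticity threshold is dictated solely by the sign assumption on the background fluence.
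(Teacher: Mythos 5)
Your proposal is correct and follows essentially the same route as the paper: identify $\Do$ as an order-zero $\Psi$DO with principal symbol $\fluence$ via Lemma~\ref{lem:psi}, deduce (a)--(d) from ellipticity and Fredholmness, and obtain (e) by composing with the Stefanov--Uhlmann time-reversal operator of Lemma~\ref{T:SU}, whose principal symbol is $\tfrac12\sum_{\sigma=\pm}\chi_{\La,T}(y_\sigma,t_\sigma)$, so that the composition has symbol $\tfrac12\fluence\sum_{\sigma=\pm}\chi_{\La,T}(y_\sigma,t_\sigma)$ and is elliptic under Condition~\ref{A:Visible}. One small correction: the visibility condition only guarantees that \emph{at least one} of the two bicharacteristics from $(x,\xi)$ reaches $\La$ within time $T$ (not both, as you claim), but since the symbol is a sum of nonnegative terms this is exactly what ellipticity requires, so your argument goes through unchanged.
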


\begin{proof}
\ref{thm:no-scattering0}-\ref{thm:no-scattering3}.
Because we have $\fluence>0 $ on $\overline \Om_0$, the operator $\Do$ is elliptic and Fredholm.
The ellipticity of $\Do$ implies $\wf(h) \cap \cT^* \Om_0 =  \wf(\Do  h )  \cap \cT^* \Om_0$ and  $\ker(\Do) \subseteq C^\infty(\Om)$ (see, e.g., \cite{hormander1971fourier,treves1980introduction}).  The Fredholm property implies $\dim \ker(\Do) < \infty$ and concludes the proof.

\ref{thm:no-scattering4}
From Theorem~\ref{T:SU}, we obtain $\Ao \chi_{\La,T} \Wo \Do$ is a pseudo-differential operator of order $0$ whose principal symbol is
$\tfrac{1}{2} \fluence(x)   \sum_{\sigma = \pm} \chi_{\La,T}(y_\sigma(x,\xi),t_\sigma(x,\xi))$, where $y_\pm(x,\xi) = \ell_\pm(x,\xi) \cap \mS$ and $t_\pm(x,\xi) = \abs{x- y_\pm(x,\xi)}$ are the location and time when the two singularities starting  at  $(x,\xi) \in \wf(h)$ hit the observation surface.
Under the Assumption~\ref{A:Visible},
$\Ao \chi_{\La,T} \Wo \Do$ is elliptic and Fredholm and concludes  the
proof.
\end{proof}

For our further analysis let us introduce the abbreviations
\begin{align} \label{eq:phimin}
\fluence_{\min}
& \coloneqq  \inf \set{ \fluence(x) \mid x \in \Om_0 }
\,,\\ \label{eq:ell}
\ell_+ (x) & \coloneqq  \frac{1}{\sabs{\sph^{d-1}}} \int_{\sph^{d-1}} \ell(x,\theta) \, d\theta  \quad \text{ for } x \in \Om
\,,\\
\ell_\infty(x) & \coloneqq
\max \{ \ell(x,\theta): \theta \in \sph^{d-1}\}  \quad \text{ for } x \in \Om
\,.
\end{align}
Recall that $\ell(x,\theta)$ is defined as the supremum over all $s>0$ such that
$x-s\theta \in \Om$.

\begin{lemma} \label{lem:sigma0-injective}
$\Do$ is injective on $L^\infty(\Om_0)$, provided that
\begin{equation} \label{E:Inj}
\sabs{\sph^{d-1}} \norm{\mua  \ell}_\infty  \norm{\Phi}_{L^\infty(\Om_0)}
< \fluence_{\min}  \,.
\end{equation}
\end{lemma}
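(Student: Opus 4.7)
The plan is to assume $\Do h_a = 0$ for some $h_a \in L^\infty(\Om_0)$ and derive the pointwise identity $h_a(x)\,\fluence(x) = \mua(x) \int_{\sph^{d-1}} \Psi(x,\theta)\,\rmd\theta$ from \eqref{eq:lin-0}. The injectivity will then follow by estimating the right-hand side in $L^\infty$ using the explicit representation \eqref{eq:Psi} for $\Psi = \Vo_0^{-1}(h_a \Phi)$, and comparing against the positive lower bound $\fluence_{\min}$ on the left, under the smallness hypothesis \eqref{E:Inj}.

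The key step is the pointwise bound
\begin{equation*}
\sabs{\Psi(x,\theta)} \;\leq\; \norm{h_a}_{L^\infty(\Om_0)} \norm{\Phi}_{L^\infty(\Om_0 \times \sph^{d-1})} \int_0^{\ell(x,\theta)} e^{-\int_0^t \mua(x-\tau\theta)\,\rmd\tau}\, \rmd t,
\end{equation*}
which follows immediately from \eqref{eq:Psi} (the $h_a \Phi$ factor is uniformly bounded, and we retain the attenuation exponential for a clean bound even though $\leq \ell(x,\theta)$ would suffice). Multiplying by $\mua(x)$, integrating in $\theta$, and using that $\mua(x)\, \ell(x,\theta) \leq \norm{\mua\ell}_\infty$ together with the dropped exponential gives
\begin{equation*}
\mua(x)\int_{\sph^{d-1}} \sabs{\Psi(x,\theta)}\, \rmd\theta \;\leq\; \sabs{\sph^{d-1}}\, \norm{\mua\ell}_\infty \norm{\Phi}_{L^\infty(\Om_0)}\, \norm{h_a}_{L^\infty(\Om_0)}.
\end{equation*}

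Combining this with the identity $h_a(x)\fluence(x) = \mua(x)\int_{\sph^{d-1}} \Psi(x,\theta)\,\rmd\theta$ and using $\fluence(x) \geq \fluence_{\min}$ on $\Om_0$ yields
\begin{equation*}
\fluence_{\min}\, \sabs{h_a(x)} \;\leq\; \sabs{\sph^{d-1}}\, \norm{\mua\ell}_\infty \norm{\Phi}_{L^\infty(\Om_0)}\, \norm{h_a}_{L^\infty(\Om_0)}.
\end{equation*}
Taking the essential supremum in $x \in \Om_0$ and invoking \eqref{E:Inj}, the factor in front of $\norm{h_a}_{L^\infty(\Om_0)}$ on the right is strictly less than $\fluence_{\min}$, forcing $\norm{h_a}_{L^\infty(\Om_0)} = 0$.

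I do not anticipate any serious obstacle: the argument is essentially a contraction/Neumann-series estimate exploiting that $\Do = \fluence \cdot \mathrm{Id} - \mua \int \Vo_0^{-1}(\Phi\, \cdot)\,\rmd\theta$ is a perturbation of multiplication by the strictly positive function $\fluence$, and the hypothesis \eqref{E:Inj} says exactly that the perturbation is strictly smaller than $\fluence_{\min}$ in operator norm on $L^\infty(\Om_0)$. The only minor point to be careful about is that $h_a$ is supported in $\Om_0$ but the integral defining $\Psi$ travels along characteristics that may pass through $\Om_0$ multiple times; this is harmless because the $L^\infty$-bound on $h_a\Phi$ is pathwise.
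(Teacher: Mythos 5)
Your proposal is correct and follows essentially the same route as the paper's proof: both use the explicit representation \eqref{eq:Psi} for $\Psi=\Vo_0^{-1}(h_a\Phi)$, bound $\mua(x)\int_{\sph^{d-1}}\sabs{\Psi(x,\theta)}\,\rmd\theta$ by $\sabs{\sph^{d-1}}\,\norm{\mua\ell}_\infty\norm{\Phi}_{L^\infty(\Om_0)}\snorm{h_a}_{L^\infty(\Om_0)}$, and compare with the lower bound $\fluence_{\min}\snorm{h_a}_{L^\infty(\Om_0)}$ for $\fluence h_a$, so that \eqref{E:Inj} forces $h_a=0$. The only cosmetic difference is that you argue directly from $\Do h_a=0$ while the paper argues contrapositively via the strict norm inequality \eqref{E:Ineq}; the estimates are identical.
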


\begin{proof}
Recall that $\Do(h_a) =   \fluence  h_a - \mua
\int_{\sph^{d-1}} \Psi (\edot,\theta)\rmd  \theta$.
In order to show the injectivity of $\Do$  it therefore
 suffices to prove that for any $h_a\neq 0$, we have
\begin{equation}\label{E:Ineq}
 \norm{\fluence \, h_a}_{L^\infty(\Om_0)} >
 \norm{\mua \, \int_{\sph^{d-1}} \Psi (\edot,\theta)\rmd  \theta }_{L^\infty(\Om_0)}.
\end{equation}
For that purpose,  the left hand side of inequality \eqref{E:Ineq} is estimated as
 $\norm{\fluence \, h_a}_{L^\infty(\Om_0)} \geq \snorm{h_a}_{L^\infty(\Om_0)}  \, \fluence_{\min}$. On the other hand, let us recall
\begin{equation*}
	\Psi(x,\theta)= \Vo_0^{-1} (h_a \Phi)(x,\theta)
	=  \int_0^{\ell(x,\theta)}  e^{- \int_0^t \mua(x- \tau \theta) \rmd \tau}  \,\Phi(x-t \theta,\theta) \,  h_a(x- t\theta) \rmd t \,.
\end{equation*}
Therefore, the right hand side of \eqref{E:Ineq} can be estimated as
\begin{multline*}
\Big| \mua(x) \, \int_{\sph^{d-1}}  \Psi(x,\theta)  \rmd \theta \Big|  =
 \Big| \mua(x) \, \int_{\sph^{d-1}}  \int_0^{\ell(x,\theta)}  e^{- \int_0^t \mua(x- \tau \theta) \rmd \tau}  \\  \times h_a(x- t\theta) \, \Phi(x- t\theta,\theta) \rmd t \rmd  \theta \Big|
 \leq  \sabs{\sph^{d-1}}   \norm{\mua \, \ell}_\infty  \norm{\Phi}_{L^\infty(\Om_0)} \snorm{h_a}_{L^\infty(\Om_0)} \,.
\end{multline*}
Together with  \eqref{E:Inj} this yields~\eqref{E:Ineq}.
\end{proof}

From Lemma \ref{lem:sigma0-injective} and  the Fredholm property of $\Do$
we conclude the following two-sided stability results for inverting
$\Do$ and $\Wo\Do$.

\begin{theorem} \label{thm:sigma0-stablity}
Suppose that  \eqref{E:Inj} is satisfied.

\begin{enumerate}
\item\label{thm:sigma0-stablity1}
There exist constants $C_1, C_2 > 0$ such that:
\begin{equation}\label{eq:sigma0-stablity1}
\forall h_a \in L^2(\Om_0) \colon \quad
 C_1\snorm{h_a}_{L^2(\Om_0)} \leq
 \|\Do(h_a)\|_{L^2(\Om_0)} \leq C_2 \snorm{h_a}_{L^2(\Om_0)}
 \,.
\end{equation}

\item\label{thm:sigma0-stablity2}
If  Condition~\ref{A:Visible} is satisfied, then for some constants $C_1', C_2' > 0$,
\begin{equation}\label{eq:sigma0-stablity2}
\forall h_a \in L^2(\Om_0) \colon \quad
C_1'\snorm{h_a}_{L^2(\Om)} \leq \|\Wo  \Do(h_a)\|_{L^2(\Lambda \times (0,T))} \leq C_2' \snorm{h_a}_{L(\Om)}\,.
\end{equation}
\end{enumerate}
\end{theorem}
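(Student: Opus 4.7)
The plan is to derive both lower bounds from three ingredients already supplied by the paper: (i) the Fredholm structure of $\Do$ (Theorem~\ref{thm:no-scattering0}); (ii) the regularity of $\ker\Do$ (Theorem~\ref{thm:no-scattering3}), which will promote $L^\infty$-injectivity to $L^2$-injectivity; (iii) the $L^\infty$-injectivity of Lemma~\ref{lem:sigma0-injective} under~\eqref{E:Inj}. The upper bounds are routine consequences of boundedness. The only genuinely nontrivial step, and the main obstacle, is establishing that $\Do$ is Fredholm on $L^2(\Om_0)$; once that is done, the rest is functional-analytic bookkeeping.

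For part (a), the upper bound is immediate since $\Do$ is a pseudodifferential operator of order $0$ (Theorem~\ref{thm:no-scattering0}) and is therefore bounded on $L^2$. For the lower bound, I decompose
\begin{equation*}
\Do h_a = \fluence \, h_a + K h_a, \qquad (K h_a)(x) \coloneqq - \mua(x) \int_{\sph^{d-1}} \Psi(x,\theta)\, \rmd\theta.
\end{equation*}
Multiplication by $\fluence$ is boundedly invertible on $L^2(\Om_0)$ because $\fluence \geq \fluence_{\min}>0$ on $\overline{\Om_0}$. The operator $K$ is compact on $L^2(\Om_0)$: by Lemma~\ref{lem:psi} it is a pseudodifferential operator of order $-1/2$, hence factors through the compact embedding $H^{1/2}(\Om_0) \hookrightarrow L^2(\Om_0)$. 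Consequently $\Do = M_\fluence(\mathrm{Id} + M_\fluence^{-1} K)$ is Fredholm of index zero. To finish, let $h_a \in L^2(\Om_0)$ satisfy $\Do h_a = 0$; by Theorem~\ref{thm:no-scattering3} one has $h_a \in C^\infty(\Om) \subseteq L^\infty(\Om_0)$, and Lemma~\ref{lem:sigma0-injective} then forces $h_a = 0$. Hence $\Do$ is invertible on $L^2(\Om_0)$, and the bounded inverse theorem supplies the coercive lower bound.

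For part (b), the upper bound follows by composing the boundedness of $\Do \colon L^2(\Om) \to L^2(\Om)$ (the same decomposition works now on $\Om$) with the boundedness of $\Wo$ (Lemma~\ref{lem:contW}). For the lower bound, I invoke the visibility condition~\ref{A:Visible}: under it, the acoustic forward map $\Wo \colon L^2_\Om(\R^d) \to L^2(\Lambda\times(0,T))$ admits the classical Stefanov--Uhlmann two-sided stability estimate~\cite{SteUhl09}, in particular
\begin{equation*}
\|\Wo g\|_{L^2(\Lambda\times(0,T))} \geq c\, \|g\|_{L^2(\Om)} \qquad \text{for all } g \in L^2_\Om(\R^d).
\end{equation*}
Applying this with $g = \Do h_a$ (which is automatically supported in $\Om$), chaining with the lower bound from part (a), and using the identity $\|h_a\|_{L^2(\Om)} = \|h_a\|_{L^2(\Om_0)}$ (valid since $\supp h_a \subseteq \Om_0$) produces the desired estimate. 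As a conceptual alternative, one could work directly with $\Ao\,\chi_{\La,T}\Wo\Do$, which Theorem~\ref{thm:no-scattering4} identifies as an elliptic Fredholm pseudodifferential operator of order $0$ with smooth kernel, and repeat the $L^\infty$-to-$L^2$ bootstrap of part (a); both routes consume the same microlocal wave-stability input.
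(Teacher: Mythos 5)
Your proposal is correct and takes essentially the same route as the paper: kernel elements of $\Do$ are smoothed by ellipticity and then killed by the $L^\infty$-injectivity of Lemma~\ref{lem:sigma0-injective}, the Fredholm property converts injectivity into the two-sided bound \eqref{eq:sigma0-stablity1}, and part (b) follows by combining this with the boundedness of $\Wo$ and the visibility-based stability of Proposition~\ref{prop:wave-stability}. The only cosmetic differences are that you re-derive the Fredholm property (with index zero) through the compact-perturbation decomposition $\Do = M_{\fluence} + K$ instead of citing Theorem~\ref{thm:no-scattering}~\ref{thm:no-scattering0}, and that you quote Theorem~\ref{thm:no-scattering}~\ref{thm:no-scattering3} for smoothness of the kernel, whereas the paper reruns Lemma~\ref{lem:psi} on an enlarged domain $\Om_1 \Supset \Om_0$ to guarantee boundedness on $\overline{\Om}_0$.
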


\begin{proof}
\ref{thm:sigma0-stablity1}
Choose $\Om_1 \Supset \Om_0$ such that $\fluence>0$ on $\overline \Om_1$.
Assume that $h \in L^2(\Om_0)$ is such that $\Wo(h) = 0$.
Then, applying Theorem~\ref{lem:psi} for $\Om_1$ (instead of $\Om_0$), we obtain $h \in C^\infty(\Om_1)$, which implies $h \in L^\infty(\Om_0)$.
Now Lemma~\ref{lem:sigma0-injective} gives $h = 0$. Therefore $\Do$ is injective on
$L^2(\Om_0)$. Because $\Do$ is Fredholm, this gives~\eqref{eq:sigma0-stablity1}.

\ref{thm:sigma0-stablity2}
This follows from \ref{thm:sigma0-stablity1} and the stability of the wave equation.
\end{proof}

Condition~\eqref{E:Inj} may be quite strong  when the solution
$\Phi(x,\theta)$ varies a lot. This is especially relevant for the case of multiple
illumination.  In the following we  therefore provide a different condition
for the case that the  background problem is sourceless, that is $\qi=0$.
Note that this is not a severe restriction since in qPAT the optical illumination
is usually modeled by a boundary pattern $\qo$.

\begin{theorem} \label{thm:no}
 Suppose  $\|\mua \, \ell_\infty \|_{L^\infty(\Om_0)} <1$
and $\qi=0$.
\begin{enumerate}
\item\label{thm:sigma0-stablity0-no}
The operator $\Do$ is injective.

\item\label{thm:sigma0-stablity1-no}
There exists some  constant $C_1 > 0$ such that
\eqref{eq:sigma0-stablity1} holds.

\item\label{thm:sigma0-stablity2-no}
If  additionally Condition~\ref{A:Visible} is satisfied, then
\eqref{eq:sigma0-stablity2} holds  for some $C_2 > 0$.
\end{enumerate}
\end{theorem}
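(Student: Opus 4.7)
The plan is to exploit a closed-form expression for $\Psi$ that becomes available when the background source vanishes. Because $\qi=0$ implies $(\ip{\theta}{\nabla_x}+\mua)\Phi=0$, the attenuation factor $e^{-\int_0^t\mua(x-\tau\theta)\rmd\tau}$ in~\eqref{eq:Psi} cancels the decay of $\Phi$ along each ray. A one-line check (apply $\ip{\theta}{\nabla_x}+\mua$ and use $\ip{\theta}{\nabla_x R}=h_a$ together with $R|_{\Gamma_-}=0$) then yields
\begin{equation*}
\Psi(x,\theta)=\Phi(x,\theta)\,R(h_a)(x,\theta),\qquad R(h_a)(x,\theta):=\int_0^{\ell(x,\theta)} h_a(x-t\theta)\,\rmd t.
\end{equation*}
Substituting into~\eqref{eq:lin-0} and using the crude bound $|R(h_a)(x,\theta)|\leq \ell_\infty(x)\,\snorm{h_a}_{L^\infty(\Om_0)}$ together with $\int\Phi\,\rmd\theta=\fluence$ produces
\begin{equation*}
\Big|\mua(x)\int_{\sph^{d-1}}\Phi(x,\theta)\,R(h_a)(x,\theta)\,\rmd\theta\Big|\leq \norm{\mua\,\ell_\infty}_{L^\infty(\Om_0)}\,\fluence(x)\,\snorm{h_a}_{L^\infty(\Om_0)}.
\end{equation*}
The crucial point is that the right-hand side is weighted by $\fluence(x)$ itself instead of by $\norm{\Phi}_\infty$ as in~\eqref{E:Inj}, and it is precisely this $\fluence$-weighting that allows the much weaker hypothesis $\norm{\mua\,\ell_\infty}_{L^\infty(\Om_0)}<1$ to suffice.

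For part (a), I would assume $\Do h_a=0$ for $h_a\in L^2(\Om_0)$. Following the strategy of the proof of Theorem~\ref{thm:sigma0-stablity}\ref{thm:sigma0-stablity1}, I would fix $\Om_1\Supset\Om_0$ with $\fluence>0$ on $\overline\Om_1$, apply Lemma~\ref{lem:psi} to see that $h_a\mapsto\int\Psi\,\rmd\theta$ is pseudodifferential of order $\leq-1/2$ on $\Om_1$, and bootstrap $h_a$ to $L^\infty(\Om_0)$ by solving $\fluence\,h_a=\mua\int\Phi R\,\rmd\theta$ after dividing by $\fluence$. The displayed bound then reads $|h_a(x)|\leq\norm{\mua\,\ell_\infty}_{L^\infty(\Om_0)}\,\snorm{h_a}_{L^\infty(\Om_0)}$ at every $x$ with $\fluence(x)>0$, and taking the supremum forces $h_a\equiv 0$ because $\norm{\mua\,\ell_\infty}_{L^\infty(\Om_0)}<1$.

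Part (b) is then a consequence of the Fredholm alternative: the same identity together with Lemma~\ref{lem:psi} exhibits $\Do$ as an order-$0$ pseudodifferential operator with principal symbol $\fluence$, hence elliptic and Fredholm on $\Om_1$, so injectivity upgrades to the lower bound in~\eqref{eq:sigma0-stablity1}, while the upper bound is just boundedness of $\Do$. Part (c) is obtained by combining (b) with the microlocal stability of the acoustic inversion under the visibility condition~\ref{A:Visible}, verbatim as in the proof of Theorem~\ref{thm:sigma0-stablity}\ref{thm:sigma0-stablity2}. The only genuinely new ingredient compared with the preceding theorem is the factorization $\Psi=\Phi\,R(h_a)$ and the $\fluence$-weighted estimate it delivers; I expect the main subtlety to be the $L^2\to L^\infty$ bootstrap in (a), which implicitly needs $\fluence>0$ on a neighborhood of $\Om_0$ and is handled by the same $\Om_1\Supset\Om_0$ device already used for Theorem~\ref{thm:sigma0-stablity}.
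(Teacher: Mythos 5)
Your proposal is correct and follows essentially the same route as the paper: the factorization $\Psi=\Phi\,R(h_a)$ is just a cleaner restatement of the paper's key observation that, since $\qi=0$, the quantity $\Phi(x-t\theta,\theta)\,e^{-\int_0^t\mua(x-\tau\theta)\,\rmd\tau}$ is constant in $t$ and equals $\Phi(x,\theta)$, which yields the same $\fluence$-weighted bound and hence injectivity under $\norm{\mua\,\ell_\infty}_{L^\infty(\Om_0)}<1$. Parts (b) and (c) are handled exactly as the paper does, namely by repeating the ellipticity/Fredholm and wave-stability arguments of Theorem~\ref{thm:sigma0-stablity}, with your explicit $L^2\to L^\infty$ bootstrap via $\Om_1\Supset\Om_0$ and Lemma~\ref{lem:psi} being precisely the device used there.
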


\begin{proof}
\ref{thm:sigma0-stablity0-no}
From \eqref{eq:Psi} we have $$| \Psi(x,\theta)| \leq \snorm{h_a}_{L^\infty(\Om_0)}  \int_0^\infty \Phi(x-t \theta,\theta) \, e^{- \int_0^t \mua(x- \tau \theta) \rmd \tau}  \rmd t.$$
Since $\Phi$ satisfies  $\Vo \Phi =0$, the function
$ \Phi(x-t \theta,\theta) \, e^{- \int_0^t \mua(x- \tau \theta) \rmd \tau} $ is independent of $t$. This implies  $\Phi(x-t \theta,\theta) \, e^{- \int_0^t \mua(x- \tau \theta) \rmd \tau}=  \Phi(x,\theta)$ and therefore   $| \Psi(x,\theta)| \leq \snorm{h_a}_{L^\infty(\Om_0)} \, \ell(x,\theta)  \, \Phi(x,\theta)$.
Hence,
$$ \int_{\sph^{d-1}}  |\Psi(x,\theta)| \, \rmd \theta  \leq \snorm{h_a}_{L^\infty(\Om_0)}  \,  \int_{\sph^{d-1}} \ell(x,\theta) \, \Phi(x,\theta)\, \rmd \theta \leq \snorm{h_a}_{L^\infty(\Om_0)}  \, \ell_\infty(x) \, \fluence(x).$$
Next recall  $\Do(h_a) =  \fluence \, h_a -  \mua \, \int_{\sph^{d-1}}  \Psi(\edot ,\theta) \, \rmd  \theta$. Therefore,
\begin{eqnarray*}
\|\Do(h_a)\|_{L^\infty(\Om)}  && \geq \sup_{x \in \Om}\left( |h_a(x)| \,  \fluence(x)  -  \mua(x) \ell_\infty(x) \fluence(x) \snorm{h_a}_{L^\infty(\Om_0)}  \right) \\ && \geq \sup_{x \in\Om} \big(|h_a(x)|  -  \mua (x) \ell_\infty (x)  \, \snorm{h_a}_{L^\infty(\Om_0)} \big) \, \fluence(x).
\end{eqnarray*}
Assume that $h_a \in L^2(\Om_0)$ is not identically zero. Since $\|\mua \,\ell_\infty \|_{L^\infty(\Om_0)}<1$,  we can find $x \in \Om_0$ such that  $|h_a(x)|  -  \mua (x) \ell_\infty(x)  \snorm{h_a}_{L^\infty(\Om_0)} >0$.
We arrive at $\|\Do(h_a)\|_{L^\infty(\Om)} >0$.
Therefore, $\Do$ is injective.

\ref{thm:sigma0-stablity1-no}, \ref{thm:sigma0-stablity2-no}
Analogously to Theorem~\ref{thm:sigma0-stablity}.
\end{proof}

\subsection{Non-vanishing scattering}
\label{sec:non}

In this section, we consider the case of known but non-vanishing
scattering $\mus \neq 0$. Let us consider the case of  single illumination.
We  present a stability and uniqueness result for the
linearized heating operator
\begin{equation}  \label{eq:derH0}
\Do (h_a) \coloneqq \Ho' \skl{\mu}(h_a, 0)
= \phi \,  h_a -  \mua\int_{\sph^{d-1}}  \Psi(\edot,\theta) \, \rmd  \theta
\end{equation}
 as well as for the linearized forward operator $\Wo\Do$.  Here $\mu = (\mua, \mus) \in \dom$
 is the linearization point, $\Phi \in W^\infty(\Om\times \sph^{d-1})$ the solution of  \eqref{eq:strong}, and $\fluence \coloneqq \int_{\sph^{d-1}} \Phi(\edot,\theta) \, \rmd \theta$ the background fluence.
  Further, $\Psi \in W^\infty_0(\Om\times \sph^{d-1})$ satisfies
 $(\Vo_0 - \mus \Ko) \Psi = h_a \, \Phi$. The latter  equation can equivalently be rewritten in the form
\begin{equation} \label{E:new-form}
(\Io- \Vo_0^{-1} \mus \Ko) \Psi =  \Vo_0^{-1} h_a \Phi \,.
\end{equation}
Recall that $\fluence_{\min}$ defined  by \eqref{eq:phimin} is the minimum
of the background fluence, and   $\ell_+(x)$ defined by \eqref{eq:ell}  is the average of
 $\ell(x,\theta)$ over all directions $\theta \in \sph^1$.

\begin{lemma}\label{lem:scatt} Assume that $\Phi  \in C^\infty(\overline \Om \times \sph^{d-1})$.
\begin{enumerate}
\item\label{lem:scatt1} $\Do$ is a pseudo-differential operator of order $0$ with
principal symbol $\phi$.
\item\label{lem:scatt2} $\Do$ is injective on $L^\infty(\Om_0)$, if
\begin{equation} \label{E:Ineq2}   \sabs{\sph^{d-1}} \, \snorm{\mua}_\infty \,\diam(\Om) \,  e^{\snorm{\mus \ell_+}_\infty} \,   \snorm{\Phi}_\infty <\fluence_{\min} \,.
\end{equation}
\end{enumerate}
\end{lemma}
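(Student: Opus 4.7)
The lemma extends Lemma~\ref{lem:psi} and Lemma~\ref{lem:sigma0-injective} to the scattering case. The key new ingredient is controlling the operator $(\Io - \Vo_0^{-1}\mus\Ko)^{-1}$ appearing in \eqref{E:new-form}. My plan for part (a) is to iterate \eqref{E:new-form} as a Neumann series and show each term is a pseudo-differential operator of order $\leq -1/2$ using Proposition~\ref{prop:PDO}, from which the principal-symbol claim follows immediately. My plan for part (b) is to mimic the argument of Lemma~\ref{lem:sigma0-injective}, reducing injectivity to a sharp $L^\infty$ bound on $R\Psi \coloneqq \int_{\sph^{d-1}}\Psi(\edot,\theta)\,\rmd\theta$ in terms of $\snorm{h_a}_{L^\infty(\Om_0)}$ that carries the exponential factor $e^{\snorm{\mus\ell_+}_\infty}$.

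For part (a), iterating \eqref{E:new-form} yields $\Psi = \sum_{k\geq 0}(\Vo_0^{-1}\mus\Ko)^k\Vo_0^{-1}(h_a\Phi)$. The $k=0$ term $R\Vo_0^{-1}(h_a\Phi)$ is exactly the operator of Lemma~\ref{lem:psi} and so is pseudo-differential of order $\leq -1/2$. Each $k\geq 1$ term can, after collapsing the intermediate $\theta'$-integrations (which are smooth because $\Phi$, $\mua$, $\mus$, and $k$ are smooth), be recast in the form $h_a \mapsto \int_0^\infty \int_{\sph^{d-1}} a_k(x,\theta,t)\, h_a(x-t\theta)\, \rmd\theta\,\rmd t$ with a smooth amplitude $a_k \in C^\infty(\overline\Om \times \sph^{d-1} \times [0,\infty))$ that is compactly supported in $t$. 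Proposition~\ref{prop:PDO} then delivers order $\leq -1/2$ for each term. Convergence of the series (in a Sobolev-space sense guaranteed by the well-posedness of the RTE, Lemma~\ref{lem:exist}) shows that $h_a \mapsto \mua R\Psi$ has order $\leq -1/2$; combined with the multiplication operator $h_a \mapsto \fluence h_a$, which has order $0$ and symbol $\fluence$, the operator $\Do$ is pseudo-differential of order $0$ with principal symbol $\fluence$.

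For part (b) and the main obstacle: imitating Lemma~\ref{lem:sigma0-injective}, for $h_a\not\equiv 0$ one needs $\snorm{\fluence h_a}_{L^\infty(\Om_0)} > \snorm{\mua R\Psi}_{L^\infty(\Om_0)}$. The left side is at least $\fluence_{\min}\snorm{h_a}_{L^\infty(\Om_0)}$, so the task reduces to proving
\begin{equation*}
\snorm{R\Psi}_{L^\infty(\Om_0)} \leq \sabs{\sph^{d-1}}\,\diam(\Om)\,\snorm{\Phi}_\infty\, e^{\snorm{\mus\ell_+}_\infty}\,\snorm{h_a}_{L^\infty(\Om_0)},
\end{equation*}
which, combined with \eqref{E:Ineq2} and $\snorm{\mua R\Psi}_\infty \leq \snorm{\mua}_\infty\snorm{R\Psi}_\infty$, closes the argument. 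The main obstacle is precisely the exponential factor: a direct Neumann bound on $(\Io-\Vo_0^{-1}\mus\Ko)^{-1}$ only gives $(1-\snorm{\mus}_\infty\diam(\Om))^{-1}$, which forces a small-scattering assumption. My plan is to set $u(x)\coloneqq R|\Psi|(x)$, pass absolute values into the ray representation of $\Psi$ provided by \eqref{E:new-form}, integrate over $\theta$, and use $|\Ko\Psi|\leq\Ko|\Psi|$ together with $\int_{\sph^{d-1}}k(\theta,\theta')\,\rmd\theta = 1$ (which holds by symmetry of $k$) to derive a Volterra-type inequality for $u$ whose iterated kernels can be bounded combinatorially by the mean chord length $\ell_+$ rather than the diameter, thereby producing the exponential prefactor.
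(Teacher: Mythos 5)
Your part (a) is essentially the paper's argument: invert \eqref{E:new-form} by the Neumann series \eqref{eq:Vseries}, so that $\Psi=\sum_{k\geq 0}(\Vo_0^{-1}\mus\Ko)^k\,\Vo_0^{-1}(h_a\Phi)$, and reduce to Proposition~\ref{prop:PDO} as in Lemma~\ref{lem:psi}; the paper is exactly this terse (``repeating the argument in the proof of Lemma~\ref{lem:psi}''). Your specific claim that each $k\geq 1$ term collapses, with a \emph{smooth} amplitude $a_k$, to the single-ray form $\int_0^\infty\int_{\sph^{d-1}}a_k(x,\theta,t)h_a(x-t\theta)\,\rmd\theta\,\rmd t$ is not obvious — these are broken-ray (multiple-scattering) operators, and the change of variables needed to collapse them has a degenerate Jacobian — but this is a fleshing-out issue shared with the paper's own one-line justification, not the main problem.

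The genuine gap is in part (b). The route you dismiss is the one the paper actually takes: the Neumann-series bound, but with the sharper operator estimate $\snorm{\Vo_0^{-1}\mus\Ko}_{L^\infty,L^\infty}\leq 1-e^{-\snorm{\mus\ell_+}_\infty}$, imported from \cite{EggSch14b}. This holds with \emph{no} smallness assumption because the transport operator being inverted in \eqref{E:new-form} carries the total attenuation $\mua+\mus$ in its exponential damping, so each ray integral is of the form $\int_0^{\ell}\mus\, e^{-\int_0^t(\mua+\mus)}\,\rmd t\leq 1-e^{-\int_0^{\ell}\mus}<1$; summing the geometric series then gives $\snorm{(\Io-\Vo_0^{-1}\mus\Ko)^{-1}}\leq e^{\snorm{\mus\ell_+}_\infty}$, which combined with $\snorm{\Vo_0^{-1}}_{L^\infty,L^\infty}\leq\diam(\Om)$ from \eqref{E:norm} is precisely the bound \eqref{E:Ineq2} requires, and part (b) finishes exactly as Lemma~\ref{lem:sigma0-injective}. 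Your substitute mechanism — a Volterra-type iteration whose iterated kernels are ``bounded combinatorially by $\ell_+$'' — is unsound as described: unlike a genuine Volterra operator, the $k$ segment lengths of a $k$-fold scattered path are only individually bounded by chord lengths, not jointly time-ordered, so there is no $1/k!$ gain; a crude iteration only yields $(\snorm{\mus}_\infty\diam(\Om))^k$, returning you to the smallness condition you wanted to avoid. (Your reduction to a closed inequality for $u=\int_{\sph^{d-1}}|\Psi(\cdot,\theta)|\,\rmd\theta$ is also problematic, since $(\Ko|\Psi|)(x-t\theta,\theta)$ is tied to the ray direction $\theta$ and the outer $\theta$-integration does not reproduce $u$.) The missing idea is the $1-e^{-\cdot}$ contraction estimate; once you have it, it also settles, without Lemma~\ref{lem:exist}, the $L^\infty$-convergence of the Neumann series you invoke in part (a).
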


\begin{proof}
\ref{lem:scatt1}
Repeating arguments of \cite{EggSch14b}, we conclude
$\|\Vo_0^{-1} \,\mus \Ko\| \leq 1- e^{- \snorm{\mus \ell_+}_\infty}$.
Therefore, the operator $\Io- \Vo_0^{-1} \mus \Ko$ is invertible with
\begin{equation} \label{eq:Vseries}
(\Io- \Vo_0^{-1} \mus \Ko)^{-1}
= \sum_{k=0}^\infty (\Vo_0^{-1} \mus \Ko)^k \,,
\end{equation}
and thus $\|(\Io- \Vo_0^{-1} \mus \Ko)^{-1} \|
\leq  e^{\|\mus \ell_+ \|_\infty}$.
From (\ref{E:new-form}) and \eqref{eq:Vseries} we obtain
the equality $\Psi = \sum_{k=0}^\infty (\Vo_0^{-1} \mus \Ko)^k \, \Vo_0^{-1} (h_a \Phi)$.
Repeating the argument in the proof of Theorem~\ref{lem:psi}, we obtain that
$h_a \mapsto \int_{\sph^{d-1}} \Psi(x,\theta) \rmd \theta$ is a pseudo-differential
operator of order at most $-1/2$ which  yields the assertion.

\ref{lem:scatt2}
From (\ref{eq:derH0}) we see that in order to prove the uniqueness of $\Do$ it
suffices to show  $\|\phi \, h_a  \|_\infty > \|\mua\int_{\sph^{d-1}}  \Psi(\edot,\theta) \, \rmd  \theta\|_\infty$ for $h_a \neq  0$. The left hand side is bounded from below by
$\snorm{h_a}_\infty \, \fluence_{\min} $, while
the right hand side is bounded from above by
\begin{equation*} \snorm{\mua (\Io- \Vo_0^{-1} \mus \Ko)^{-1}  \Vo_0^{-1} (h_a \Phi)}_\infty
 \leq \snorm{\mua}_\infty  \|(\Io- \Vo_0^{-1} \mus \Ko)^{-1} \| \snorm{\Vo_0^{-1}}_{L^\infty, L^\infty} \|h_a \Phi\big\|_\infty.\end{equation*}
Recalling that
 $ \snorm{\Vo_0^{-1}}_{L^\infty, L^\infty} \leq  \diam(\Om) $
 and   $ \|(\Io- \Vo_0^{-1} \mus \Ko)^{-1} \| \leq  e^{\|\mus \ell_+ \|_\infty}$
 (see (\ref{E:norm}) and the line below (\ref{eq:Vseries})) and making use of (\ref{E:Ineq2}), we obtain
\begin{eqnarray*} \sabs{\sph^{d-1}} \, \snorm{\mua (\Io- \Vo_0^{-1} \mus \Ko)^{-1}  \Vo_0^{-1} (h_a \Phi)}_\infty < \snorm{h_a}_\infty \, \fluence_{\min} \,. \end{eqnarray*}
This finishes our proof.
\end{proof}

Similar to the  case of  vanishing scattering we
obtain the following results.

\begin{theorem} \label{thm:scatt}  Suppose $\phi(x)>0$ for all $x \in \Om_0$.
\begin{enumerate}
\item\label{thm:scatt1} $\wf(h) = \wf(\Do) \cap \cT^* \Om_0$.
\item\label{thm:scatt2} $\Do \colon  L^2(\Om_0) \to L^2(\Om)$ is a Fredholm operator.
\item\label{thm:scatt3} $\dim \kl{\ker \Do} < \infty$.

\item\label{thm:scatt4}  If, additionally, inequality~\eqref{E:Ineq2} holds, then there exist constants
$C_1, C_2>0$ such that for  all $h_a \in L^2(\Om_0)$ we have
\begin{align}
\frac{1}{C_1}\snorm{h_a}_{L^2(\Om_0)} &\leq
\|\Do h_a\|_{L^2(\Om)} \leq C_1 \snorm{h_a}_{L^2(\Om_0)}  \,,\\
\frac{1}{C_2}\snorm{h_a}_{L^2(\Om_0)} &\leq
\| \Wo \Do h_a\|_{L^2(\Om)} \leq C_2 \snorm{h_a}_{L^2(\Om_0)} \,.
\end{align}
\end{enumerate}
\end{theorem}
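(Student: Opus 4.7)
\medskip

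\noindent\textbf{Proof plan.} The strategy is to mirror the structure of Theorem~\ref{thm:no-scattering}, using Lemma~\ref{lem:scatt} in place of Lemma~\ref{lem:psi} and Lemma~\ref{lem:sigma0-injective}. By Lemma~\ref{lem:scatt}\ref{lem:scatt1}, $\Do$ is a pseudo-differential operator of order $0$ on $\Om_0$ with principal symbol $\fluence$, so the hypothesis $\fluence>0$ on $\overline{\Om}_0$ makes $\Do$ elliptic. For part~\ref{thm:scatt1} I would invoke standard microlocal elliptic regularity: for an elliptic PDO the wavefront set is preserved modulo the cotangent bundle of a neighborhood, which yields $\wf(h)\cap \cT^*\Om_0 = \wf(\Do h)\cap \cT^*\Om_0$. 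For parts~\ref{thm:scatt2} and~\ref{thm:scatt3}, ellipticity of $\Do$ together with the compactness of $\overline{\Om}_0$ implies $\Do$ is Fredholm between appropriate $L^2$-spaces, and in particular $\ker \Do$ is finite-dimensional. Elliptic regularity also implies $\ker \Do \subseteq C^\infty(\Om_0)$, a fact we will need in the next step.

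For part~\ref{thm:scatt4}, the two-sided estimate for $\Do$ follows once injectivity on $L^2(\Om_0)$ has been established, because a Fredholm operator with trivial kernel has closed range and a bounded left inverse. To go from injectivity on $L^\infty(\Om_0)$ (provided by Lemma~\ref{lem:scatt}\ref{lem:scatt2} under assumption \eqref{E:Ineq2}) to injectivity on $L^2(\Om_0)$, I would take $h_a \in L^2(\Om_0)$ with $\Do h_a = 0$, choose $\Om_1$ with $\Om_0 \Subset \Om_1 \Subset \Om$ such that $\fluence>0$ on $\overline{\Om}_1$, and apply the elliptic regularity established above on $\Om_1$ to conclude $h_a \in C^\infty(\Om_1) \subseteq L^\infty(\Om_0)$. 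Lemma~\ref{lem:scatt}\ref{lem:scatt2} then forces $h_a = 0$. The upper bound in the estimate is immediate from the $L^2$-boundedness of the order-$0$ PDO $\Do$.

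For the estimate involving $\Wo\Do$, I would compose the just-obtained stability for $\Do$ with the stability of the acoustic wave forward problem. Concretely, under the visibility condition on $(\La,T)$ (which is implicitly required, just as in Theorem~\ref{thm:no-scattering}\ref{thm:no-scattering4}), the operator $\Ao\chi_{\La,T}\Wo$ composed with $\Do$ is an elliptic PDO of order $0$; equivalently, $\Wo$ is bounded below on the range of $\Do$ (which is contained in $L^2_{\Om}(\R^d)$), yielding the required $C_2$. The upper bound again follows from $L^2$-boundedness of $\Wo$ established in Lemma~\ref{lem:contW}.

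The main obstacle I anticipate is the passage from $L^\infty$-injectivity to $L^2$-injectivity: one must take care that $h_a \in L^2(\Om_0)$ does indeed inherit $C^\infty$-regularity on a neighborhood of $\overline{\Om}_0$ from the global elliptic regularity of $\Do$. This requires a slight enlargement of the working domain, as done in the proof of Theorem~\ref{thm:sigma0-stablity}, and relies on the fact that the $\fluence$-positivity assumption can be extended to a slightly larger set by continuity of $\fluence$. The remainder of the argument is a routine assembly of known facts (ellipticity $\Rightarrow$ Fredholm, Fredholm $+$ injective $\Rightarrow$ two-sided bound) rather than new technical input.
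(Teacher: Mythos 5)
Your proposal is correct and follows essentially the same route as the paper: ellipticity of $\Do$ from Lemma~\ref{lem:scatt}~\ref{lem:scatt1} yields \ref{thm:scatt1}--\ref{thm:scatt3} by standard pseudodifferential facts, and \ref{thm:scatt4} is obtained exactly as in Theorem~\ref{thm:sigma0-stablity} (enlarge $\Om_0$ to $\Om_1$, use elliptic regularity to pass from $L^2$ to $L^\infty$, invoke Lemma~\ref{lem:scatt}~\ref{lem:scatt2} for injectivity, then Fredholm plus injectivity and the stability of the wave inversion). Your explicit remark that the visibility condition~\ref{A:Visible} is implicitly needed for the $\Wo\Do$ estimate is consistent with the paper's appeal to Theorem~\ref{thm:sigma0-stablity}~\ref{thm:sigma0-stablity2}.
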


\begin{proof}
\ref{thm:scatt1}-\ref{thm:scatt3}: According to Lemma~\ref{lem:scatt} \ref{lem:scatt1}, $\Do$ is a
elliptic pseudodifferential operator of order zero with principal symbol $\phi > 0$,
which implies \ref{thm:scatt1}-\ref{thm:scatt3}.

\ref{thm:scatt4}  Is shown analogously to Theorem \ref{thm:sigma0-stablity}.
\end{proof}

\subsection{Multiple illuminations}

We now consider the general case of possibly multiple illuminations
$\qo_i$ for $i = 1, \dots N$, where we assume $\mus=0$ and $\qi =0$.
The observation surface  and measurement times for the $i$-th illumination
are denoted by $\Lambda_i$ and $T_i$; see Subsection~\ref{sec:forward-multiple}.

For any $i=1,\dots, N$ let us denote
\begin{align*}
\Om_i  &\coloneqq  \{ x \in \Om \mid  \dist(x,\Lambda_i) \leq T_i \text{ and } \phi_i(x) > 0 \} \,,\\
\Sigma_i   &\coloneqq \{ (x,\xi) \in \Om \times (\R^d \setminus 0) \mid
\text{ the line passing through $x$  along  direction $\xi$}   \\
& \hspace{0.13\textwidth} \text{   intersects $\Lambda_i$ at a distance less than $T_i$ from $x$ } \text{ and } \phi_i(x)>0 \} \,.
\end{align*}
Then $\Om_i$ is the uniqueness set  and $\Sigma_i$ the visibility set
determined by the  $i$-th illumination  with observation surface $\Lambda_i$ and
measurement time $T_i$.   We also denote $\Sigma \coloneqq  (\Sigma_1, \dots, \Sigma_N)$ and
set $\|\Wo \Do(h_a)\|_{L^2(\Sigma)}^2
    \coloneqq \sum_{i=1}^N \|\Wo_i \Do_i(h_a)\|^2_{L^2(\Sigma_i)}$.

\begin{theorem} \label{thm:multiple}
Suppose   $\|\mua \, \ell_\infty \|_{L^\infty(\Om_0)} <1$.
\begin{enumerate}
\item\label{thm:multiple1} If $\Om \subseteq \bigcup_i \Om_i$, then $\Wo \Do$ is injective.

\item\label{thm:multiple2} If $\Om \times (\R^d \setminus 0)  \subseteq \bigcup_i \Sigma_i$, then
there is $C>0$ such that
\begin{equation*}
\forall h_a \in L^2(\Om_0) \colon \quad
\frac{1}{C}\snorm{h_a}_{L^2(\Om_0)} \leq \|\Wo \Do(h_a)\|_{L^2(\Sigma)} \leq C \snorm{h_a}_{L^2(\Om_0)}
\end{equation*}
\end{enumerate}
\end{theorem}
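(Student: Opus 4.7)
Both parts reduce to single-illumination arguments already developed in the paper. A useful preliminary observation is that the visibility condition in (b) is strictly stronger than the coverage condition in (a): if $(x,\xi) \in \Sigma_i$, then $x$ is within distance $T_i$ from $\Lambda_i$ and $\fluence_i(x) > 0$, so $x \in \Om_i$. Hence the visibility hypothesis implies $\Om \subseteq \bigcup_i \Om_i$, and (a) may be invoked inside the proof of (b).

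\textbf{Part (a): injectivity.} Let $h_a \in L^2(\Om_0)$ satisfy $\Wo\Do(h_a) = 0$, so $\Wo_i \Do_i(h_a) = 0$ for every $i$. Apply the standard uniqueness theorem for the free-space wave equation with measurements on $\Lambda_i \times (0,T_i)$ to conclude that $\Do_i(h_a)$ vanishes on $\{x \in \Om \mid \dist(x,\Lambda_i) \leq T_i\}$, in particular on $\Om_i$. Since $\fluence_i > 0$ on $\Om_i$, Theorem~\ref{thm:no-scattering} shows that $\Do_i$ is an elliptic pseudodifferential operator of order $0$ there, so elliptic regularity upgrades $h_a$ from $L^2$ to $C^\infty(\Om_i)$. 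The covering hypothesis $\Om \subseteq \bigcup_i \Om_i$ then yields $h_a \in C^\infty(\Om) \subseteq L^\infty(\Om_0)$. Because $\qi_i = 0$, the argument of Theorem~\ref{thm:no}(a) now applies pointwise: combining \eqref{eq:Psi} with $\Vo\Phi_i = 0$ produces $|\Psi_i(x,\theta)| \leq \snorm{h_a}_{L^\infty(\Om_0)}\, \ell(x,\theta)\, \Phi_i(x,\theta)$, which, plugged into $\Do_i(h_a) = 0$ on $\Om_i$, gives $|h_a(x)| \leq \mua(x)\, \ell_\infty(x)\, \snorm{h_a}_{L^\infty(\Om_0)}$ for $x \in \Om_i$. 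Taking the supremum over the covering forces $h_a = 0$ because $\snorm{\mua \ell_\infty}_{L^\infty(\Om_0)} < 1$.

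\textbf{Part (b): stability.} The upper bound is immediate from boundedness of each $\Wo_i \Do_i$. For the lower bound I would combine microlocal analysis with a Fredholm argument. For each $i$, the operator $N_i \coloneqq \Do_i^* \Wo_i^* \chi_{\Lambda_i,T_i}^2 \Wo_i \Do_i$ (or an equivalent variant built from a parametrix of $\Wo_i$) is a pseudodifferential operator of order $0$ on $\Om_0$ whose principal symbol is nonnegative and strictly positive exactly on $\Sigma_i$; this is the microlocal content of Theorem~\ref{thm:no-scattering}(d) together with the explicit form of the principal symbol of $\Ao\chi_{\Lambda,T}\Wo\Do$ recorded in its proof. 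Under the visibility hypothesis, the principal symbol of $N \coloneqq \sum_i N_i$ is then strictly positive on the cosphere bundle over $\Om_0$, so $N$ is an elliptic pseudodifferential operator of order $0$, hence Fredholm on $L^2(\Om_0)$. Since the same hypothesis implies the covering assumption of part (a), $\Wo\Do$ --- and therefore $N$ --- is injective. An injective Fredholm operator is bounded below, so
\[
\snorm{h_a}_{L^2(\Om_0)}^2 \leq C\, \sinner{N h_a}{h_a} \leq C'\, \snorm{\Wo\Do(h_a)}_{L^2(\Sigma)}^2 \,,
\]
which is the required estimate.

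\textbf{Main obstacle.} The delicate point is verifying that the sum of nonnegative principal symbols is uniformly bounded below on the entire cosphere bundle (not merely positive at each point) and matching the resulting coercivity inequality back to the specific norm $\snorm{\cdot}_{L^2(\Sigma)}$ in the statement. This requires a careful choice of the cutoffs $\chi_{\Lambda_i,T_i}$ so that $\sinner{N_i h_a}{h_a} \lesssim \snorm{\Wo_i \Do_i h_a}_{L^2(\Sigma_i)}^2$ up to lower-order terms absorbed via compactness of the embedding $H^s \hookrightarrow L^2$ for $s > 0$, together with the standard argument that an injective Fredholm operator has no residual kernel contribution.
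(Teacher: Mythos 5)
Your part (a) is correct and essentially the argument of the paper: the paper argues directly that a nonzero $h_a$ yields $\snorm{\Do_i(h_a)}_{L^\infty(\Om_i)}>0$ for some $i$ and then invokes injectivity of $\Wo_i$ on $\Om_i$ (Proposition~\ref{prop:wave-uniqueness}), while you run the contrapositive and insert an elliptic-regularity upgrade from $L^2$ to $L^\infty$ before applying the pointwise estimate of Theorem~\ref{thm:no}. That extra step is welcome (it is exactly how the paper handles the same issue in Theorem~\ref{thm:sigma0-stablity}), and your preliminary remark that the hypothesis of (b) implies that of (a) is also what the paper tacitly uses when it says ``since the map $\Wo\Do$ is injective.''

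In part (b), however, there is a genuine gap: the entire argument rests on the claim that $N_i=\Do_i^\ast\Wo_i^\ast\chi_{\La_i,T_i}^2\Wo_i\Do_i$ is a pseudodifferential operator of order zero with principal symbol positive exactly on $\Sigma_i$, and you attribute this to Theorem~\ref{thm:no-scattering}~(e)/Lemma~\ref{T:SU}. Those results concern the \emph{time-reversal} composition $\Ao\,\chi_{\La,T}\Wo$, not the adjoint-based normal operator. To establish your claim one must decompose $\Wo_i=\Wo^{(+)}+\Wo^{(-)}$ into its two constituent Fourier integral operators and show that the cross terms $\Wo^{(\pm)\ast}\chi^2\Wo^{(\mp)}$ are smoothing (or otherwise harmless); nothing in the paper supplies this, and your proposal does not either, so the ellipticity and Fredholm property of $N=\sum_i N_i$ — and with them the coercivity $\snorm{h_a}^2_{L^2}\leq C\sinner{Nh_a}{h_a}$ — are not justified as written. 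The fact is true and the approach can be repaired, but note that if you replace $\Wo_i^\ast\chi_i^2$ by $\Ao_i\chi_i$ so as to use Lemma~\ref{T:SU} directly, you lose the quadratic-form identity $\sinner{N h_a}{h_a}=\sum_i\snorm{\chi_i\Wo_i\Do_i h_a}^2$ and must instead bound $\snorm{\Ao_i\chi_i\Wo_i\Do_i h_a}$ by the data norm via boundedness of $\Ao_i$ — at which point you have essentially reconstructed the paper's own, simpler route: a local elliptic estimate $\snorm{h_a}_{L^2(\Om_i)}\leq C(\snorm{\Do_i h_a}_{L^2(\Om_i)}+\snorm{h_a}_{H^{-1/2}(\Om_0)})$ from ellipticity of $\Do_i$ where $\fluence_i>0$, the Stefanov--Uhlmann stability of the partial-data wave inversion (Proposition~\ref{prop:wave-stability}) to pass to $\snorm{\Wo_i\Do_i h_a}_{L^2(\Sigma_i)}$, summation over the covering, and removal of the compact $H^{-1/2}$ remainder using injectivity together with the standard compactness argument (Taylor, Proposition~V.3.1) — the same principle as your ``injective Fredholm operator is bounded below.''
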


\begin{proof}
\ref{thm:multiple1} Suppose $h_a \in L^2(\Om_0)$ does not completely vanish.
Repeating the argument in the proof  of Theorem~\ref{thm:no}, we obtain
\begin{eqnarray*}
\|\Do_i(h_a)\|_{L^\infty(\Om_i)}  \geq \sup \set{ \big(|h_a(x)|  -  \mua (x) \ell_\infty (x)  \, \snorm{h_a}_{L^\infty(\Om_0)} \big) \, \fluence_i(x)
\mid x \in\Om_i} \,.
\end{eqnarray*}
Since it holds that $\|\mua \,\ell_\infty \|_{L^\infty(\Om_0)}<1$,  we can find some
$x \in \Om_0$ such that  $|h_a(x)|  -  \mua (x) \ell_\infty(x)
\snorm{h_a}_{L^\infty(\Om_0)} >0$.
As $\Om \subseteq \bigcup_i \Om_i$, we have $x \in \Om_i$ for some $i$. We arrive at $\|\Do_i(h_a)\|_{L^\infty(\Om_i)} >0$. Since $\Wo_i$ is injective on $\Om_i$, we obtain $\Wo_i \Do_i (h_a) \neq 0$.
Therefore, $\Wo \Do$ is injective.

\ref{thm:multiple2} Repeating the argument for Theorem~\ref{thm:no}, we obtain
$\|\Wo_i \Do_i(h_a)\|_{L^2(\Sigma_i)} \leq C \snorm{h_a}_{L^2(\Om)}$,
 and therefore $\|\Wo \Do(h_a)\|_{L^2(\Sigma)} \leq C \snorm{h_a}_{L^2(\Om_0)}$
 for some $C>0$.
It now remains to prove the left in equality in (b). To this end, let us notice that
$$\|h_a\|_{L^2(\Om_i)} \leq C \, (\|\Do_i h_a\|_{L^2(\Om_i)} + \, \|h_a\|_{H^{-1/2}(\Om_0)})
\,. $$
Due to the stability of the wave inversion, we have
$ \|\Do_i h_a\|_{L^2(\Om_i)} \leq C_i \|\Wo_i \Do_i h_a\|_{L^2(\Sigma_i)}$
Therefore $\|h_a\|_{L^2(\Om_i)} \leq C_i \, (\|\Wo_i \Do_i h_a\|_{L^2(\Sigma_i)} + \, \|h_a\|_{H^{-1/2}(\Om_0)})$ which  gives
$$\|h_a\|_{L^2(\Om_0)} \leq C
\, (\|\Wo \Do h_a\|_{L^2(\Sigma)} + \, \|h_a\|_{H^{-1/2}(\Om_0)}) \,.$$
Since the map $\Wo \Do$ is injective, applying \cite[Proposition V.3.1]{taylor1981pseudodifferential}, we conclude the estimate
$\|h_a\|_{L^2(\Om_0)} \leq C \, \|\Wo \Do h_a\|_{L^2(\Sigma)}$.
\end{proof}

\begin{remark}[Unknown scattering]
Suppose that the attenuation and  scattering are unknown and
consider the linearization with respect to both parameters
\begin{equation*}
\Ho' \skl{\mua,\mus}(h_a, h_s)
= \phi  \, h_a - \mua\int_{\sph^{d-1}}  \Psi(\edot,\theta) \, \rmd  \theta
\,,
\end{equation*}
where $\Psi =  \kl{ \Vo_0 - \mus \Ko}^{-1} \bigl[ \skl{h_a + h_s - h_s\Ko} \Phi  \bigr]$.
The second term in the displayed expression
is a smoothing operator of degree  at least $1/2$. Let $h_a$ and $h_s$ have
singularities of the same order (say, they both have jump singularities). Then, the main singularities of  $\Ho' \skl{\mua,\mus}$ come from the term $\phi \, h_a$.
In the case that $\phi>0$ on $\overline \Om$, then all the singularity of $h_a$ are reconstructed with the correct order and magnitude by $\tfrac{1}{\phi} \Ho' \skl{\mua,\mus}(h_a, h_s)$. A similar situation occurs in the case of multiple illuminations. This indicates that recovering  the scattering coefficient is more ill-posed than recovering the attenuation coefficient.
\end{remark}

\section{Numerical simulations}
\label{sec:num}

Simulations are performed in spatial dimension $d=2$.
The linearized RTE is solved on a  square domain $\Om={ [-1,1]^2}$.
For the scattering  kernel we choose the two dimensional version of the Henyey-Greenstein kernel,
\begin{equation*}
	k(\theta,\theta')
	\coloneqq
	\frac{1}{2\pi}\frac{1-g^2}{1+g^2-2g\cos(\ip{\theta}{\theta'})}
	\quad \text{ for } \theta, \theta' \in \sph^1 \,,
\end{equation*}
where $g \in (0,1)$ is the  anisotropy factor.
For all simulations we choose the internal sources $q$ to be zero.
Before we present results of our numerical simulations we first outline how we numerically solve the stationary RTE in two spatial dimensions.
This step is required for simulating the data as well as for evaluating the adjoint  of the linearized problem in the iterative solution.

\subsection{Numerical solution of the RTE}
\label{sec:FE}

For solving the linearized RTE for the inverse problem \eqref{eq:der} we employ a streamline diffusion finite element method as in \cite{haltmeier2015single,YaoSunHua10}. The weak form of equation \eqref{eq:strong}
is derived by integration against a test function $w \colon \Om\times \sph^1 \to \R$.
Integrating by parts in the transport term yields
\begin{multline}\label{eq:weak}
\int_{\Om}
\int_{\sph^1}
\kl{-\ip{\theta}{\nabla_x} w + \mua w+ \mus w- \mus  \Ko w}
  \Phi  \, \rmd \theta \, \rmd x
\\ +\int_{\partial \Om \times \sph^1} \Phi \, w \, \kl{\ip{\theta}{\nu}}
\rmd\sigma
=
\int_{\Om}
\int_{\sph^1}\!\!
q \, w  \, \rmd \theta \, \rmd x  \,,
\end{multline}
where  we dropped all dependencies on the variables  and $\rmd \sigma$ denotes the usual surface measure on  $\partial \Om \times \sph^1$.
Our numerical  scheme replaces the  exact solution $\Phi$ by a linear combination
$\Phi^{(h)} (x,\theta)
	=
	\sum_{i =1}^{N_h} c_i^{(h)}  \psi_i^{(h)} (x,\theta)$ in the finite element space, where any basis function $\psi_i^{(h)}(x,\theta)$ is the product of a basis function in the
spatial variable $x$ and a basis function in the angular variable $\theta$. We use a uniform triangular grid of grid size $h$, that leads to basis functions that are pyramids; see~\cite[Figure 3]{haltmeier2015single}.
To discretize the velocity direction we divide the  unit circle into $N_\theta$ equal sub-intervals and choose the basis functions to be piecewise affine and continuous functions.

The streamline diffusion method \cite{Kan10} adds some artificial diffusion in the transport direction to increase stability in low scattering areas. It uses the test functions
$	w(x,\theta) = \sum_{j =1}^{N_h} w_j\skl{\psi_j(x,\theta) +
	D(x,\theta)\, \ip{\theta}{\nabla_x}\psi_j(x,\theta)}$,
	where $D(x,\theta)$ is an appropriate stabilization parameter.
In our experiments we choose the stabilization parameter
$D(x,\theta) = 3 h/100$ for areas where $\mua(x)+\mus(x)  < 1$ and zero otherwise. Using the  test functions  in equation \eqref{eq:weak} one obtains a system of  linear equations $M^{(h)} c^{(h)} = b^{(h)}$ for the coefficient vector of the numerical solution.

\subsection{Test scenario for single illumination}
\label{sec:test}

We illuminate the sample in the orthogonal direction along the lower boundary of the rectangular
domain, we  choose
\begin{equation} \label{eq:oil}
f(x,\theta)  = I_0  F (\theta ) \times
\begin{cases}
	1 &  \text{ for } (x_1,x_2) \in \{-1\}\times[-1,1] \,, \\
	0 &  \text{ on the rest of } \partial \Omega   \,,
\end{cases}
\end{equation}
where $F$ is constant  on each of the $N_\theta$ sub-intervals of unit circle,
takes the value   $N_\theta/(2\pi)$ at $(0,1)^\trans$ and is zero at the other discretization points.
In the case of multiple illuminations we use orthogonal illuminations from all four sides of
$ \Omega = [-1,1]^2$, where the absorption and  scattering  coefficients are supported. The forward problem for the wave equation is solved by discretizing the  integral representation~\eqref{eq:waveoperator}.
We take measurements  of the pressure on the circle of radius $R = 1.5$ and the  pressure data on the time interval $[0,3]$.

In Figure~\ref{fig:num} we illustrate the measurement procedure for full data, where $\Lambda = \partial B_{1.5}(0)$. For partial measurements we restrict the polar angle $\varphi$ on the measurement circle to $[0,\pi]$. Thus illumination and measurement are performed on the same side of the sample,
a situation allowing for obstructions on the  side opposite to the performed measurements.
We add $0.5\%$ random noise to the simulated data; more precisely we take the maximal value of the simulated pressure and add white noise with a standard deviation of $0.5\%$
of that maximal value.

\begin{figure}[h!] \centering
\begin{subfigure}[b]{0.4\linewidth}
\begin{tikzpicture}[scale=0.8]
\begin{axis}[x=1cm,y=1cm, axis equal, clip=false, domain=-2.5:2.5,
xtick=\empty,
ytick=\empty, 
]
\addplot graphics [xmin=-1.35,xmax=1.35,ymin=-1.35,ymax=1.35, includegraphics={trim=7cm 2.9cm 6cm 2cm,clip}] {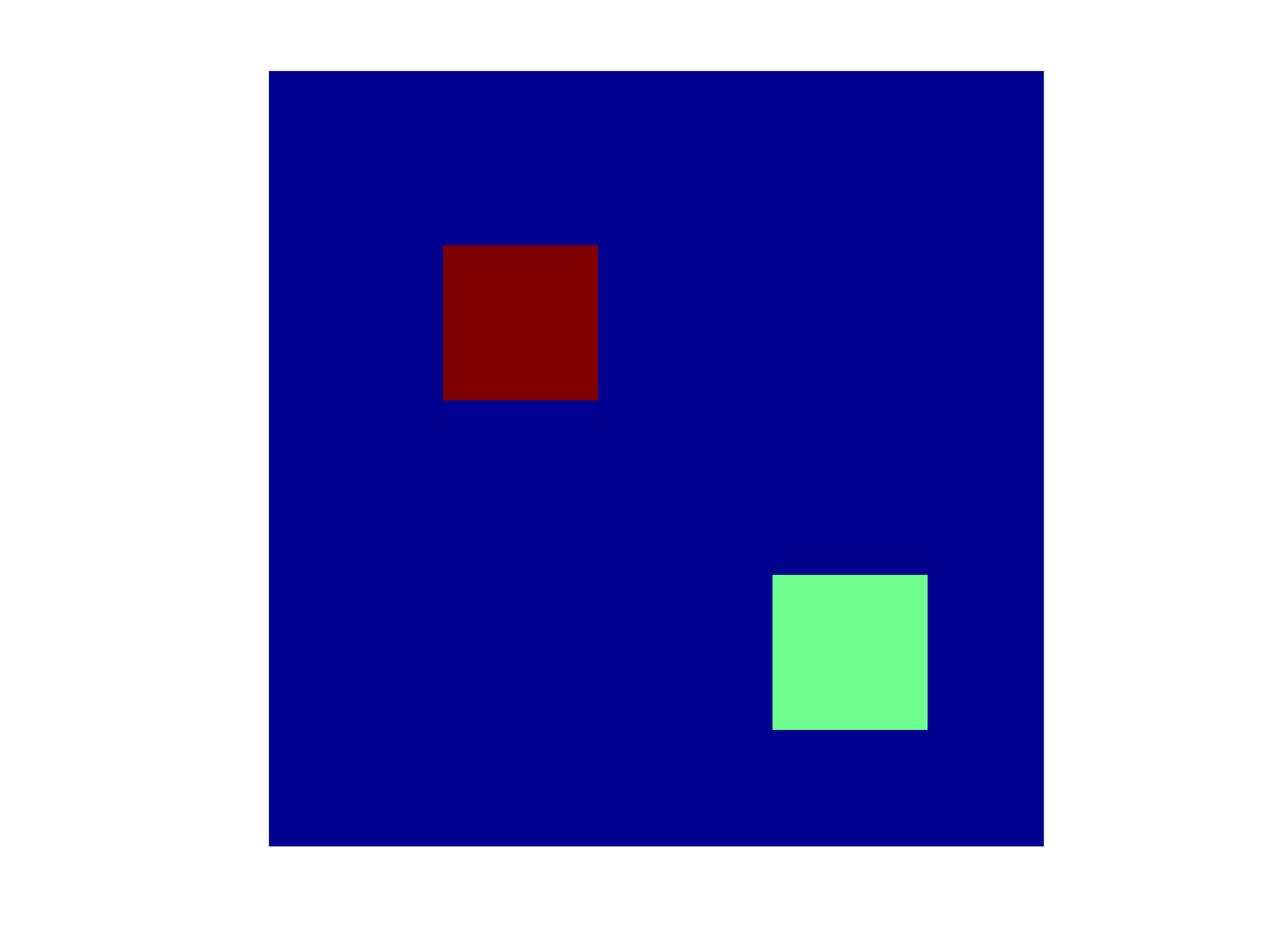};
\addplot[samples=20, domain=-1:1,
variable=\t,
quiver={
u={0},
v={0.5},
scale arrows=0.5},
->,blue]
({1.5*t}, {-1.75});
\addplot[samples=100, domain=0:pi, dashed]
({2.5*cos(deg(x))}, {2.5*sin(deg(x))});
\addplot[samples=100, domain=pi:2*pi]
({2.5*cos(deg(x))}, {2.5*sin(deg(x))});
\addplot[samples=20, domain =0:1] ({2.5*cos(315)+x},{2.5*sin(315)});
\addplot[samples=20, domain =0:1] ({(2.5+x)*cos(315)},{(2.5+x)*sin(315)});
\addplot[samples=20, domain =0:45, <-] ({2.5*cos(315)+0.8*cos(315+x)},{2.5*sin(315)+0.8*sin(315+x)});
\node at (axis cs:1.25*1.8 , -1.25*1.6) {$\varphi$};
\node at (axis cs:0,2.7) {Detectors $\Lambda$};
\node at (axis cs:0,-1.95) {\textcolor{blue}{Illumination}};
\end{axis}
\end{tikzpicture}
\caption{Measurement setup.}\label{fig:setup}
\end{subfigure}
\begin{subfigure}[b]{0.4\linewidth}
\includegraphics*[width=0.95\textwidth, trim=0.5cm 1cm 2.9cm 1.5cm,clip]{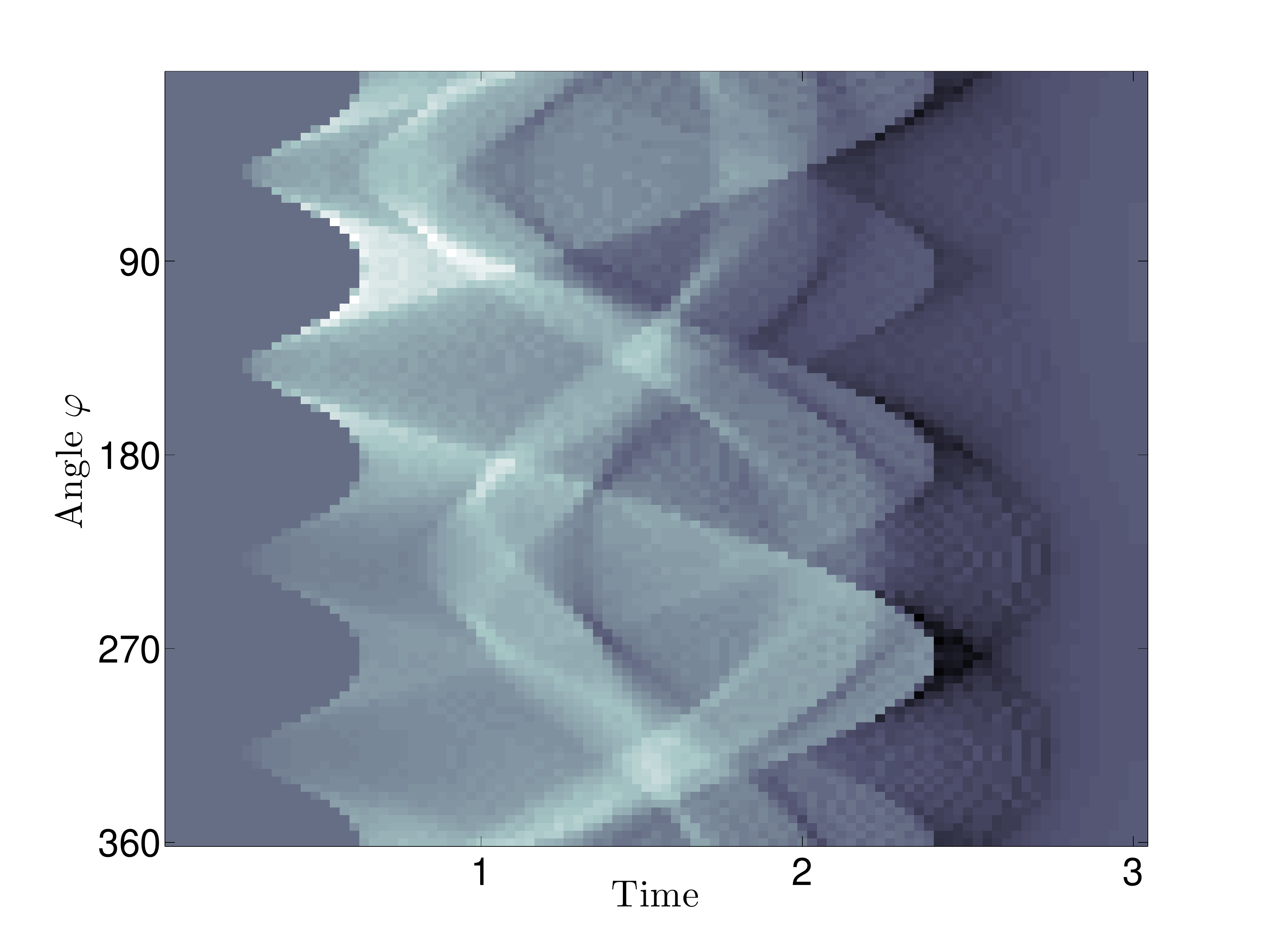}
\caption{Simulated pressure data.}\label{fig:measurement}
\end{subfigure}
\caption{\textsc{Measurement\label{fig:num} setup and simulated data.} Acoustic pressure is represented by the density in the gray scale.}
\end{figure}

\subsection{Solution of the linearized inverse problem}

We solve the forward problem and added random noise as described  in the previous subsection.
Since we use only boundary sources of illumination in our simulation this corresponds to
calculating the simulated forward data
\[
\data \coloneqq \Wo_{\La,T}\circ \Ho_{f,0}(\mua,\mus) + \mathrm{noise}\,.
\]
In our simulations the parameters $\mua$ and $\mus$ are constant on the boundary. The linearization point $(\mua^\lin,\mus^\lin)$ is chosen spatially constant and equal to the respective parameter on the boundary.
This represents a situation where the parameters of the tissue on the boundary are known but internal variations in absorption and/or scattering are of interest. The solution at the linearization point is denoted by
$v^\lin\coloneqq\Wo_{\La,T}\circ \Ho_{f,0}(\mua^\lin,\mus^\lin)$.
In our numerical simulations we do not attempt to reconstruct $\mus$. We linearize around the value at the boundary and fix that value for all our calculations. Reconstruction of $\mus$ is more difficult because $\mus$ influences the heating in a more indirect way. Preliminary numerical attempts indicate that different step sizes in the direction of $\mua$ and $\mus$ have to be used but reconstruction of also $\mus$ is a subject of further studies.

As in Subsection~\ref{sec:non} we write
$\Do(h_a)=(\Ho_{f,0})'(\mua^\lin,\mus^\lin)(h_a,0)$
for the  G\^ataux derivative  of $\Ho_{f,0}$ at position
$(\mua^\lin,\mus^\lin)$ in direction $(h_a,0)$; see
Proposition~\ref{prop:Hdiff}.  We refer to $\Do$ as the linearized heating operator.
The solution of the linearized inverse problem then  consists in finding $h_a$
in the set of admissible directions, such that the residuum functional of the linearized problem
is minimized,
\begin{equation}\label{eq:linMin}
\mathrm{Res}(h_a)=\|\data-v^\lin - ( \Wo_{\La,T}\circ \Do) (h_a)\|^2_2\rightarrow \min  \,.
\end{equation}
Using the minimizer of the linearized residuum we  define  the  approximate linearized solution  as
$\mua  = \mua^\lin + h_a$.

For solving \eqref{eq:linMin} we use  the Landweber iteration
\begin{equation} \label{eq:landweber}
h^{n+1}_a = h_a^n + \lambda \,  \Do^\ast \circ \Wo_{\La,T}^\ast\left(\data-v^\lin-
(\Wo_{\La,T}\circ \Do) (h_a^n)\right) \quad \text{ for } n \in \N\,,
\end{equation}
with starting value $h_a^0=0$, where $\lambda$ is  the step size.
Note that we do not reconstruct the heating as intermediate step but immediately reconstruct
the parameters of interest. Such a single stage approach has advantages especially in the case of multiple measurements with partial data; a more thorough discussion can be found in \cite{haltmeier2015single} for example.
The heating operator is discretized with the same finite element technique as the forward-problem
(see Proposition~\ref{prop:Hdiff}) and the adjoint is calculated after discretization as an adjoint matrix.
To solve the adjoint wave propagation problem we discretize formula \eqref{eq:adjoint}.

Our stability analysis shows that $\Wo_{\La,T}\circ \Do$  is Fredholm operator
 (see Theorem~\ref{thm:scatt}) and, in particular, that $\Wo_{\La,T}\circ \Do$ has closed range.
 Therefore,  for any $\data \in L^2(\La \times (0,T))$, the Landweber iteration converges to
 the minimizer of~\eqref{eq:linMin} with a linear rate of convergence, provided that the step
 size satisfies $\la < 2 / \|\Wo_{\La,T}\circ \Do\|_2^2$.
 In the numerical simulations we used  about $50$ iterations after which we
 already obtained quite accurate results. The convergence speed can further be
 accelerated by using iterations  such as the CG algorithm. See
 \cite{haltmeier2016iterative} for a comparison and analysis of various  iterative
 methods for the  wave inversion  process.
Theorem~\ref{thm:scatt} further implies that the minimizer of~\eqref{eq:linMin} is unique
and satisfies the  two-sided stability estimates if~\eqref{E:Ineq2}
is satisfied. Evaluated at the linearization point $\mu^\lin$
inequality~\eqref{E:Ineq2} reads
$
2 \pi \, \snorm{\mua^\lin}_\infty \, e^{\snorm{\mus^\lin \ell_+}_\infty} \,  \diam(\Om) \, \snorm{\Phi^\lin}_\infty < \min_{x\in\Omega} \left\{ \int_{\sph^1}\Phi^\lin(x,\theta)\rmd \theta \right\}
$.
Assuming, as is reasonable for collinear illumination, that $\Phi^\lin$ takes its maximum at the boundary we find $2\pi \, \diam(\Om) \, \snorm{\Phi^\lin}_\infty = I_0 N_\theta 2 \sqrt{2}$.
Here we have taken  $f$ as in \eqref{eq:oil} which satisfies  $\snorm{f}_\infty = I_0 N_\theta/(2\pi)$.
Geometric considerations show that maximum of $\ell_+$ is taken at the center where
it takes the  value $4 \operatorname{arsinh}(1) /\pi $.
So, for constant $\mus^\lin$ and $N_\theta=64$, the above inequality  simplifies to
\begin{equation} \label{eq:uc}
\snorm{\mua^\lin}_\infty \cdot 3^{\mus^\lin}
\lesssim
\frac{1}{181 \, I_0 } \min_{x\in\Omega} \left\{ \int_{\sph^1}\Phi^\lin(x,\theta)\rmd \theta \right\} \,.
\end{equation}
Condition \eqref{eq:uc} requires quite small values for absorption and scattering at the linearization point and future work will be done
to weaken  this condition.

\subsection{Numerical  results}

The domain $\Om =  [-1,1]^2$ is discretized  by a mesh of triangular elements of 6400 degrees of freedom and we divide the angular domain into $N_\theta = 64$ sub-intervals of equal length. The anisotropy factor is taken as $g = 0.8$ throughout all the experiments.

\begin{figure}[thb!]\centering
\begin{subfigure}[b]{0.3\linewidth}
\includegraphics*[width=\textwidth, trim=4cm 3cm 0cm 1cm,clip]{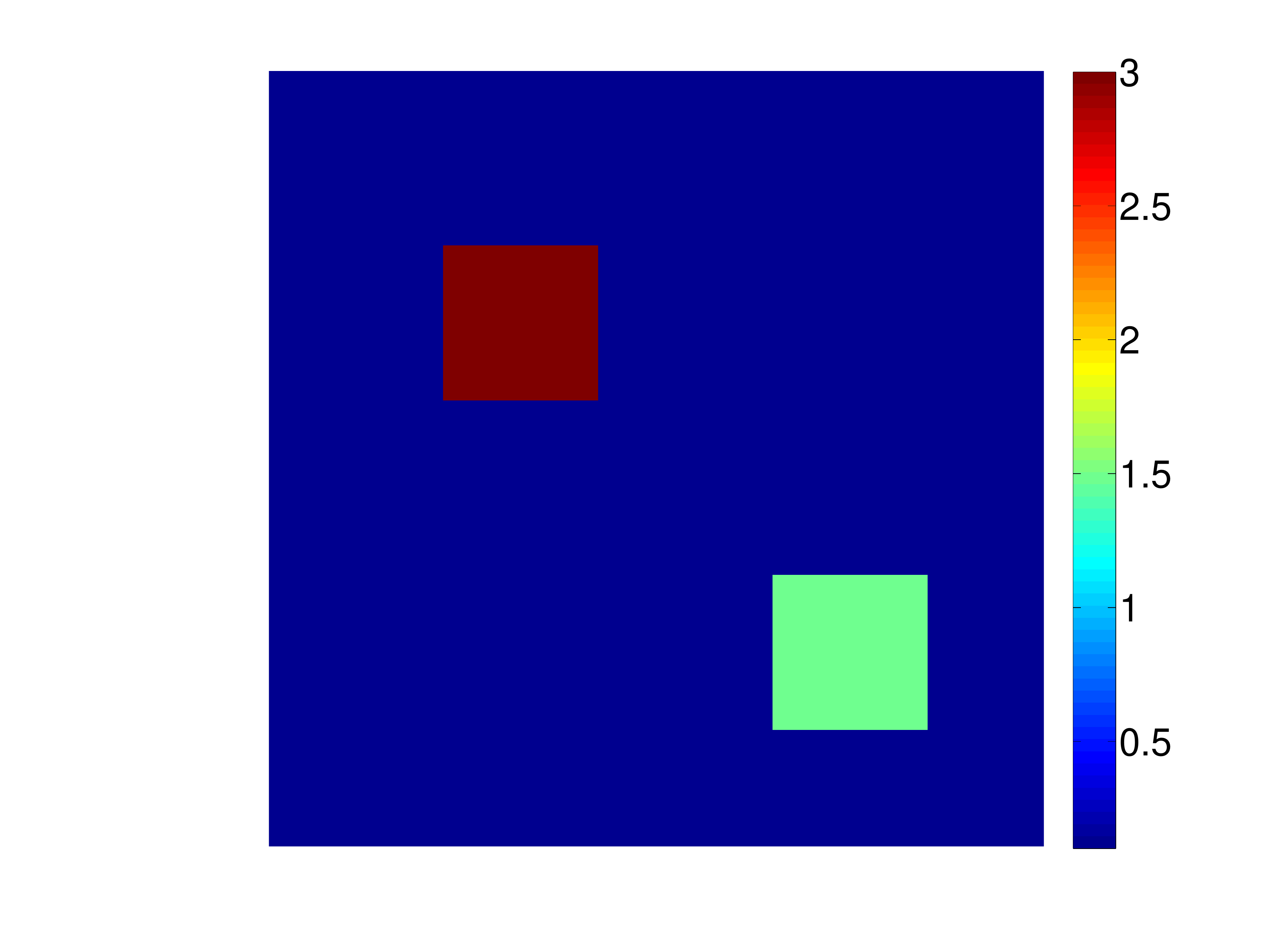}
\caption{Phantom $\mua$}\label{fig:e11}
\end{subfigure}
\begin{subfigure}[b]{0.3\linewidth}
\includegraphics*[width=\textwidth, trim=4cm 3cm 0cm 1cm,clip]{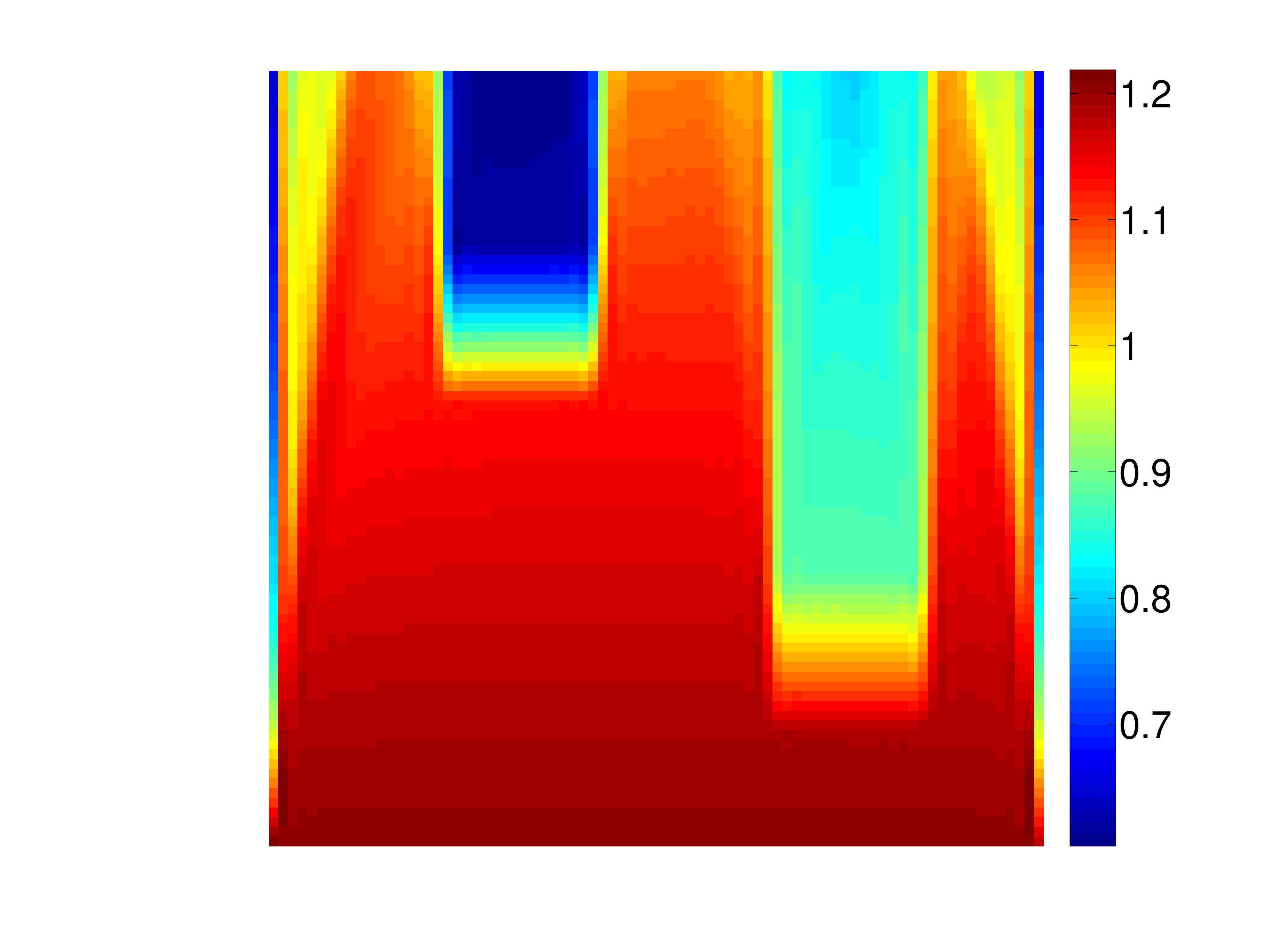}
\caption{Illumination}\label{fig:e12}
\end{subfigure}
\begin{subfigure}[b]{0.3\linewidth}
\includegraphics*[width=\textwidth, trim=17cm 2.5cm 2cm 14cm,clip]{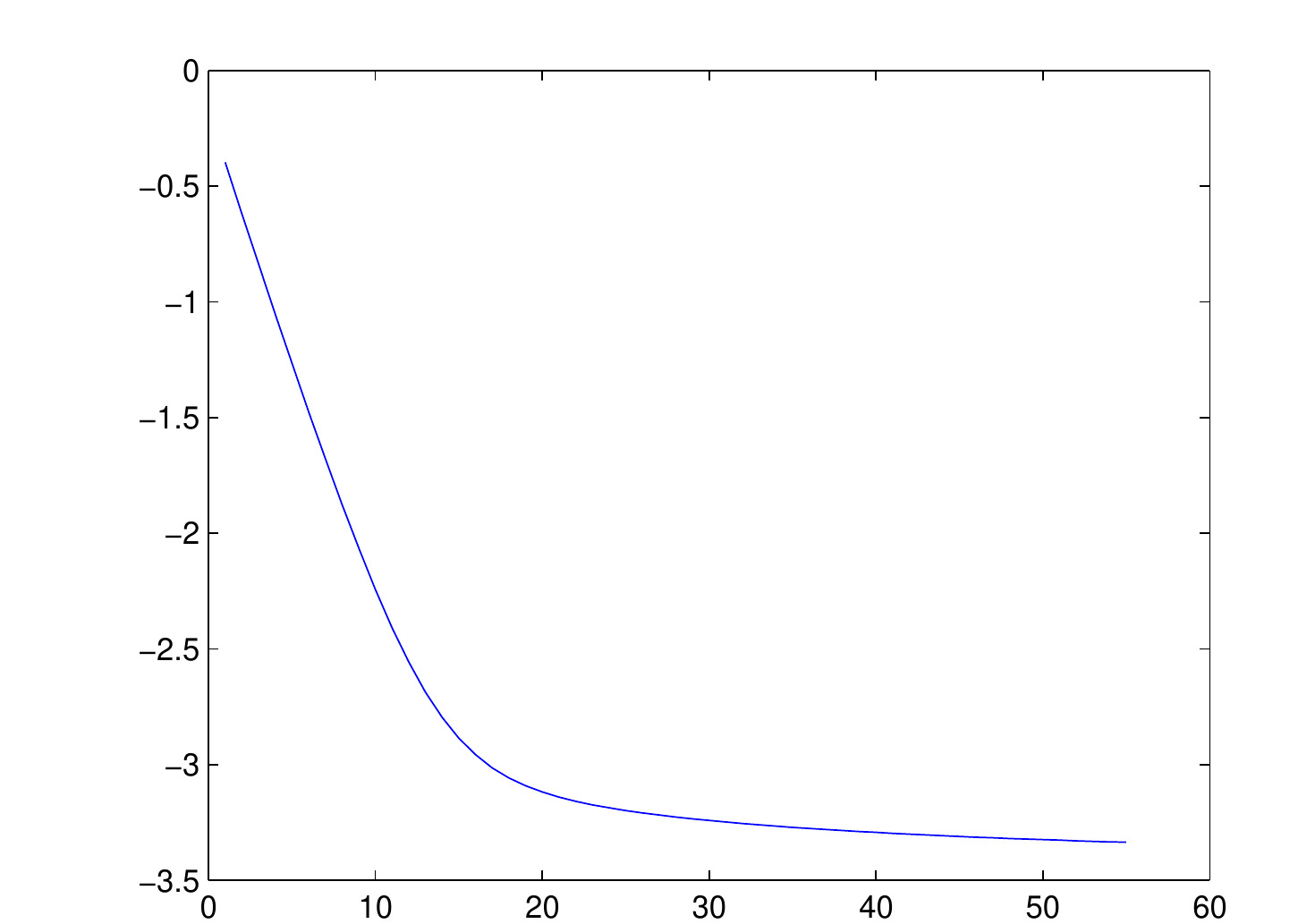}
\caption{Residual.}\label{fig:e13}
\end{subfigure}\\
\begin{subfigure}[b]{0.3\linewidth}
\includegraphics*[width=\textwidth, trim=4cm 3cm 0cm 1cm,clip]{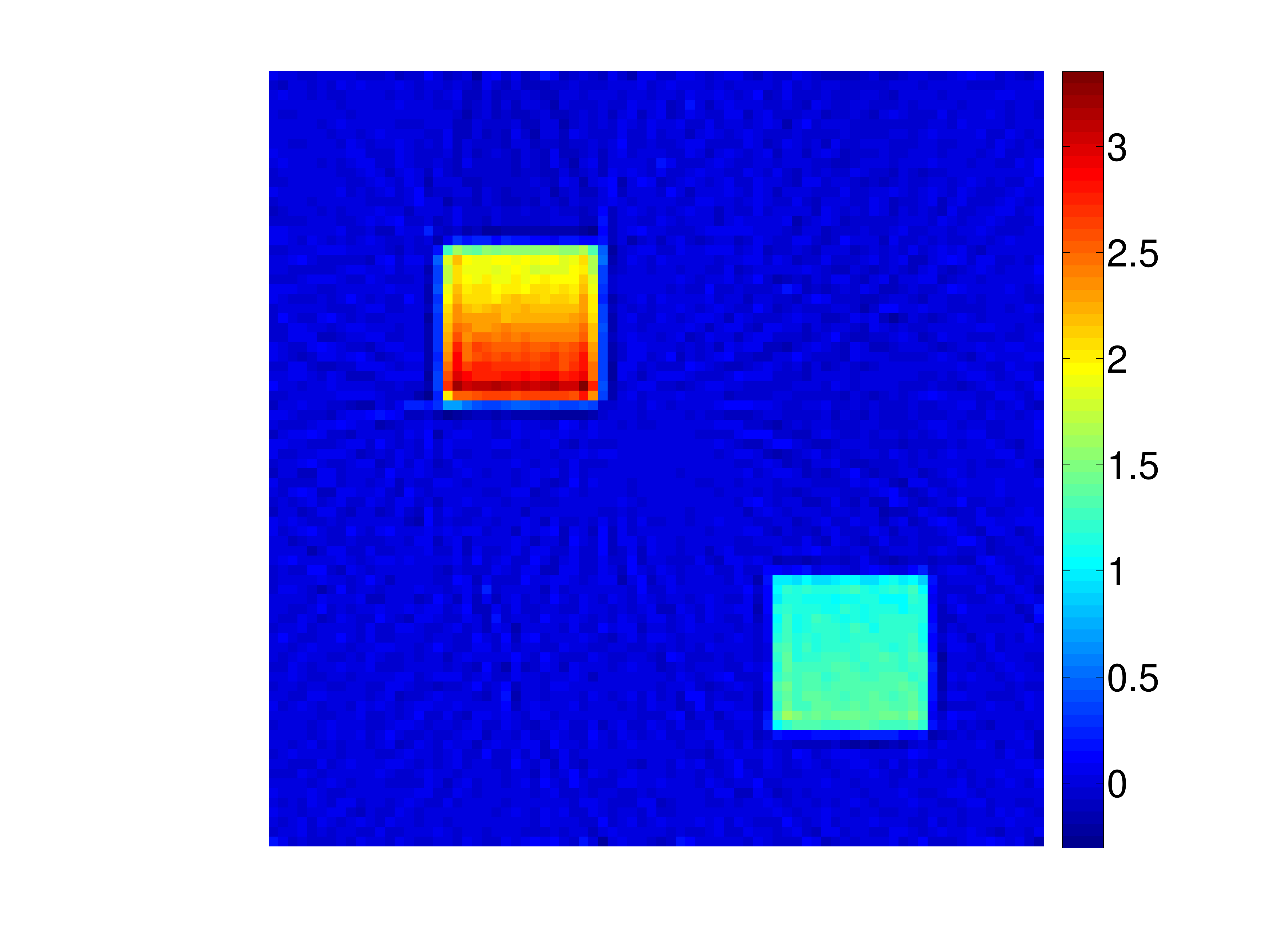}
\caption{Full data}\label{fig:e14}
\end{subfigure}
\begin{subfigure}[b]{0.3\linewidth}
\includegraphics*[width=\textwidth, trim=4cm 2.5cm 0cm 1cm,clip]{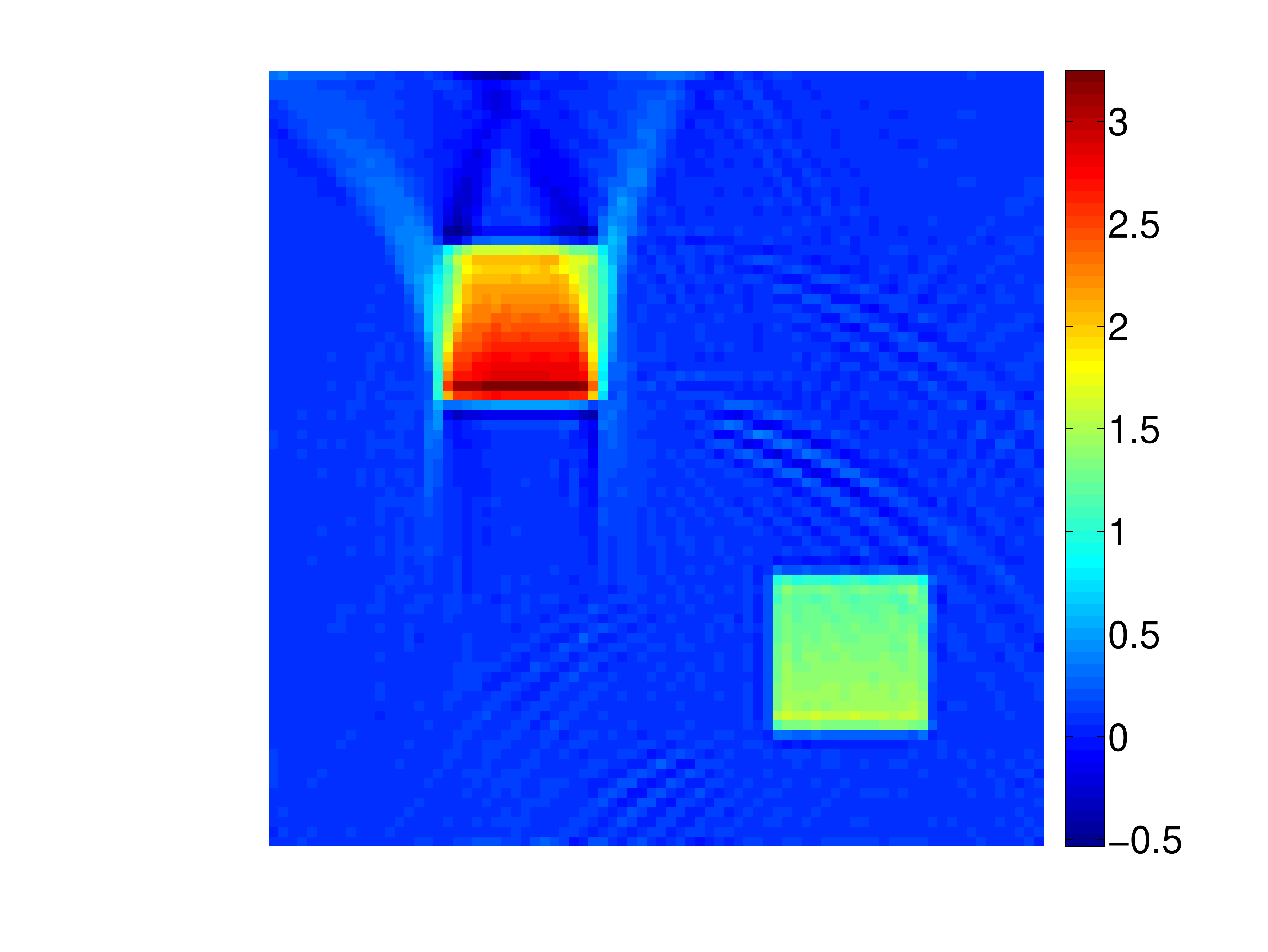}
\caption{Half data. \label{fig:e15}}
\end{subfigure}
\begin{subfigure}[b]{0.3\linewidth}
\includegraphics*[width=\textwidth, trim=4cm 2.5cm 0cm 1cm,clip]{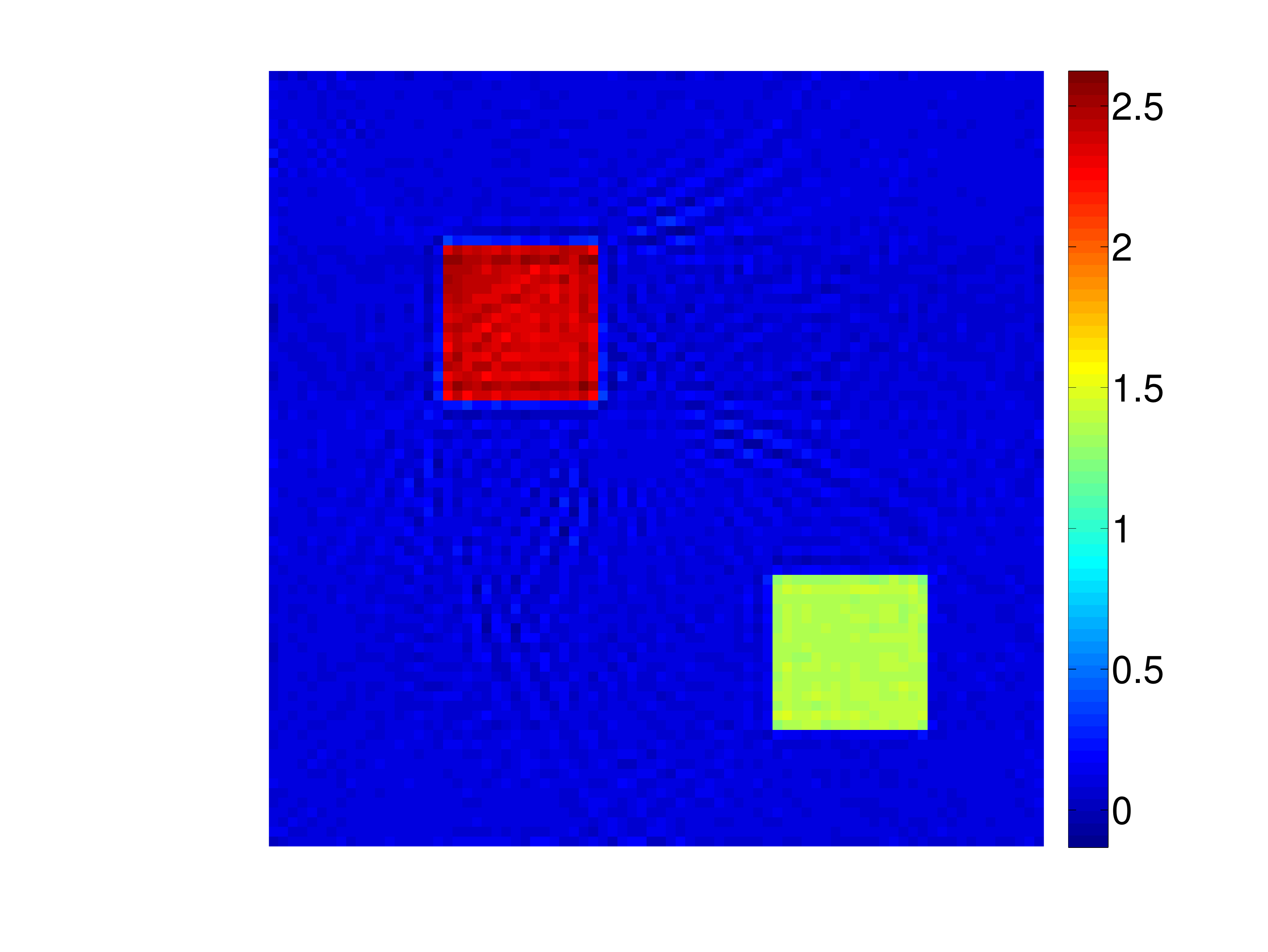}
\caption{4 $\times $ half data. \label{fig:e16}}
\end{subfigure}
\caption{\textsc{Simulation results for low scattering.} \label{fig:1}
The scattering coefficient $\mus=0.1$ is taken constant; the absorption coefficient is shown in \ref{fig:e11}, and the linearization point is given by
$\mua^\lin =  \mus^\lin = 0.1$.}
\end{figure}

\subsubsection*{Small scattering and good contrast in absorbtion}

We first consider a rather small and constant scattering  $\mus=0.1$ and good contrast in the absorption, where  $\mua$ was chosen $0.1$ for the background and $1.5$ respectively  $3$ in  the small boxes in the interior.
Observe that this corresponds to  low scattering regime,  as can be seen by the very well defined shadows behind the obstacles shown in
Figure \ref{fig:1}.  The Landweber iteration \eqref{eq:landweber} has been applied with the linearization point is $\mua^\lin=0.1$ and $\mus^\lin = 0.1$.
The reconstruction for single  illumination  where pressure measurements are made on the whole circle $\partial B_{1.5}(0)$ surrounding the obstacle is shown in Figure \ref{fig:e14}. All  the singularities in $\mua$ are well resolved. Convergence of the Landweber iteration is fast but we can not expect the residuum functional  \eqref{eq:linMin} to go to zero as the data may be outside the range of the linearized forward operator. The reconstruction is qualitatively and quantitatively in good accordance with the phantom.
Figure \ref{fig:e15} shows the  result  for partial acoustic measurements, where the acoustic measurements  are made on a semi-circle on the same side as the illumination. Finally, Figure~\ref{fig:e16} uses four consecutive illuminations with partial  data (again on a semi-circle on the same side as the illumination). This has been implemented by turning the obstacle (or the measurement apparatus)
by $\pi/2$ between consecutive illuminations.
One notices  that for a single illumination the phantom is still quite well resolved, but  the  typical partial data  artifacts can  be observed. These artifacts disappear when incomplete data from multiple measurements are
collected in such a way that the union of the observation sets form the whole circle. 	 Due to the illumination from four sides the reconstruction is even  much better than in the case of one measurement with full data.

\begin{figure}[thb!]\centering
\begin{subfigure}[b]{0.3\linewidth}
\includegraphics[width=\textwidth, trim=4cm 2.5cm 0cm 1cm,clip]{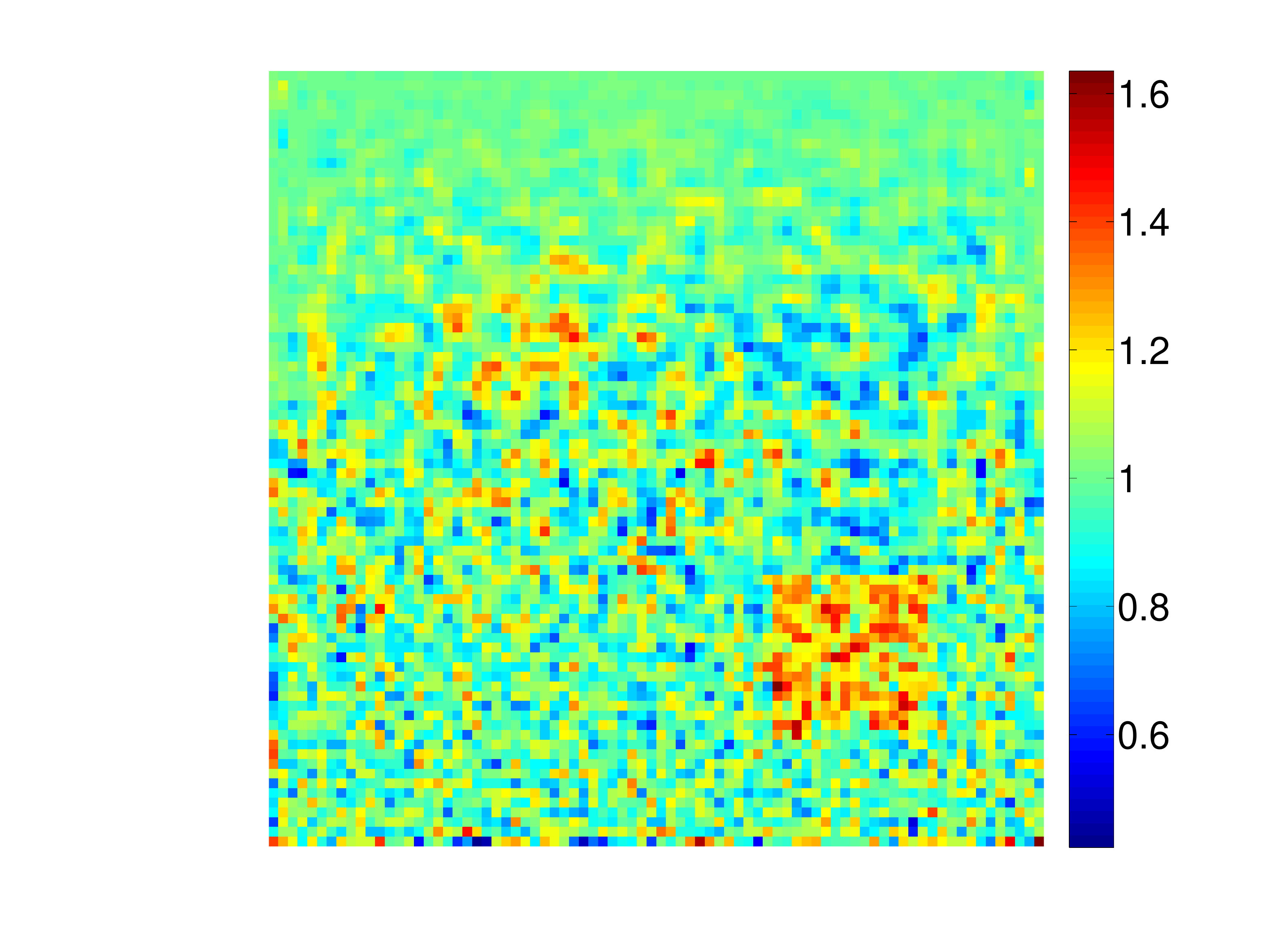}
\caption{Full data}\label{Fig:secondExp2}
\end{subfigure}
\begin{subfigure}[b]{0.3\linewidth}
\includegraphics*[width=\textwidth, trim=4cm 2.5cm 0cm 1cm,clip]{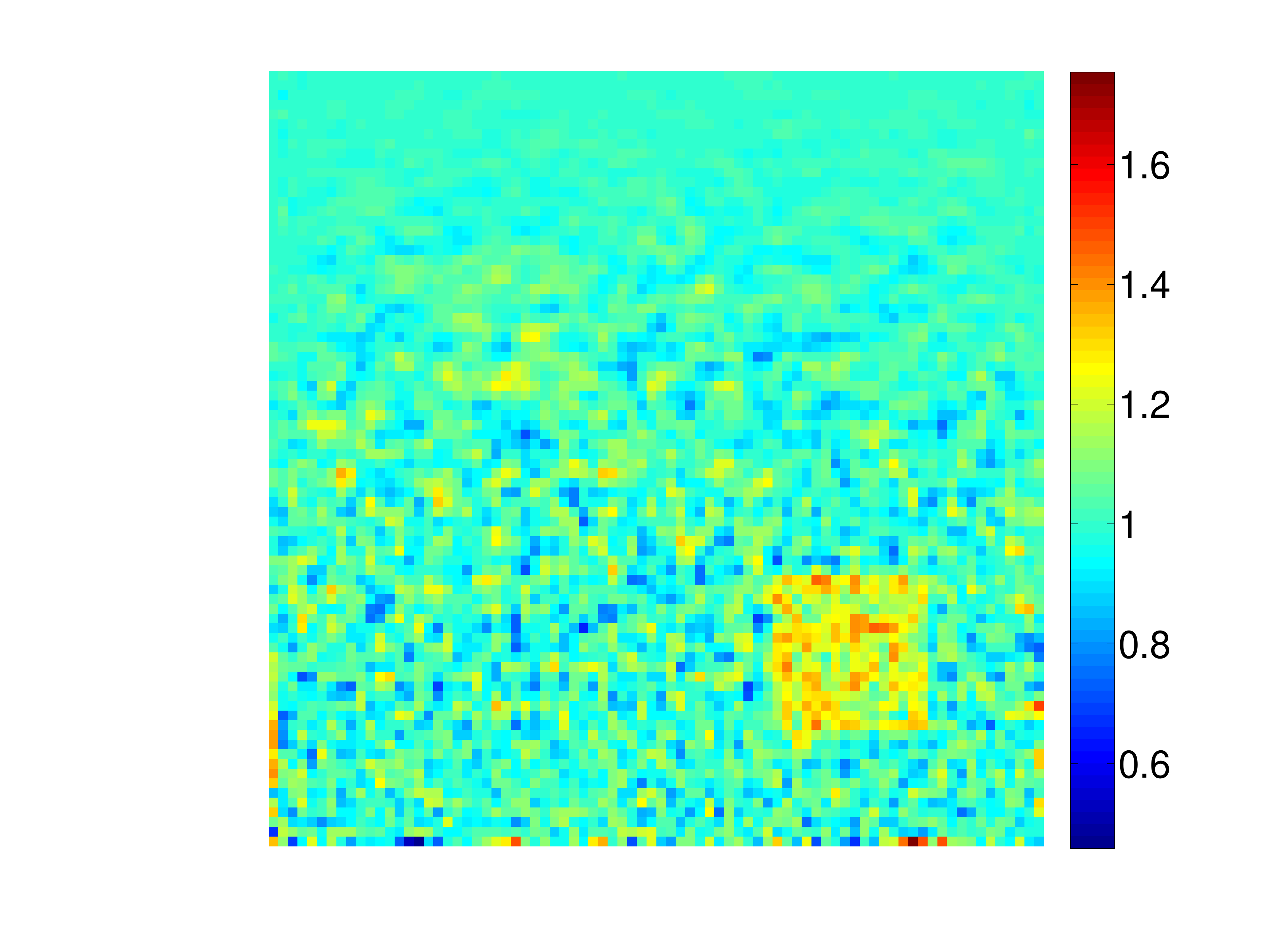}
\caption{Half data}\label{Fig:secondExp3}
\end{subfigure}
\begin{subfigure}[b]{0.3\linewidth}
\includegraphics*[width=\textwidth, trim=4cm 2.5cm 0cm 1cm,clip]{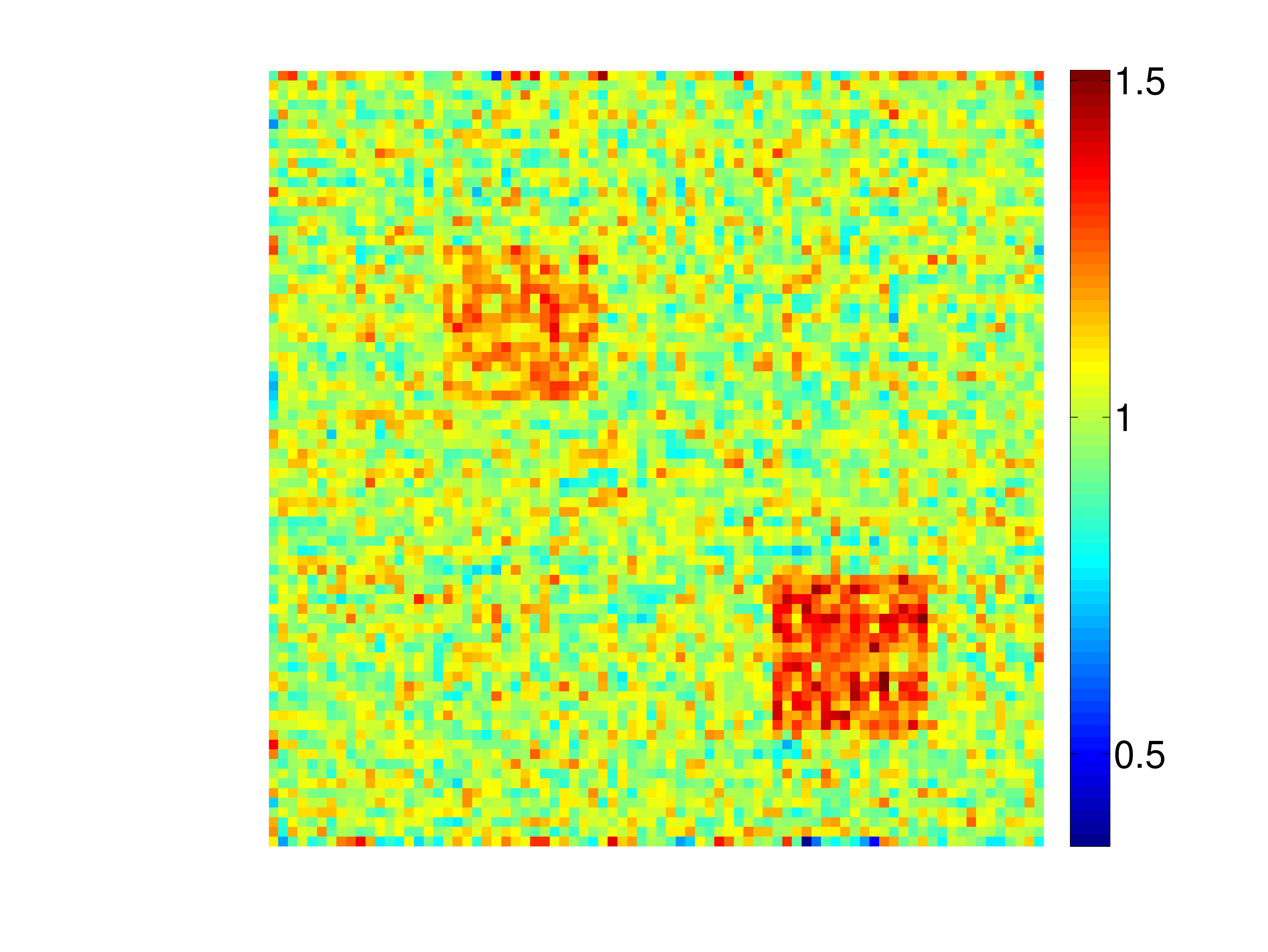}
\caption{4 $\times $ half data}\label{Fig:secondExp4}
\end{subfigure}
\caption{\textsc{Reconstruction\label{Fig:secondFull} of low contrast phantom from data with large noise.}
The actual and linearized scattering parameters have been taken  constant  and equal to
$\mus=\mus^\lin=0.1$.}
\end{figure}

\subsubsection*{Low contrast phantom and large noise}

For  the experiment presented next we decrease the contrast in $\mua$.
To demonstrate the stability of our reconstruction approach  we also increase the noise.
The scattering parameter $\mus=0.1$,
constant throughout the domain,  and the absorption coefficient $\mua$ is chosen to take the value  1 in the background,  and $1.1$ respectively  $1.2$  in the obstacles. The noise has a standard deviation of
$5\%$. The reconstruction results are shown in Figure \ref{Fig:secondFull}.
One notices that the  upper left square of very low contrast is only barely visible in the full
data situation and probably not recognizable if the phantom is unknown. As expected multiple measurements from different directions increase the signal to noise ratio even if only
incomplete data is acquired.

\begin{figure}[thb!]\centering
\begin{subfigure}[b]{0.3\linewidth}
\includegraphics[width=\textwidth, trim=4cm 2.5cm 0cm 1cm,clip]{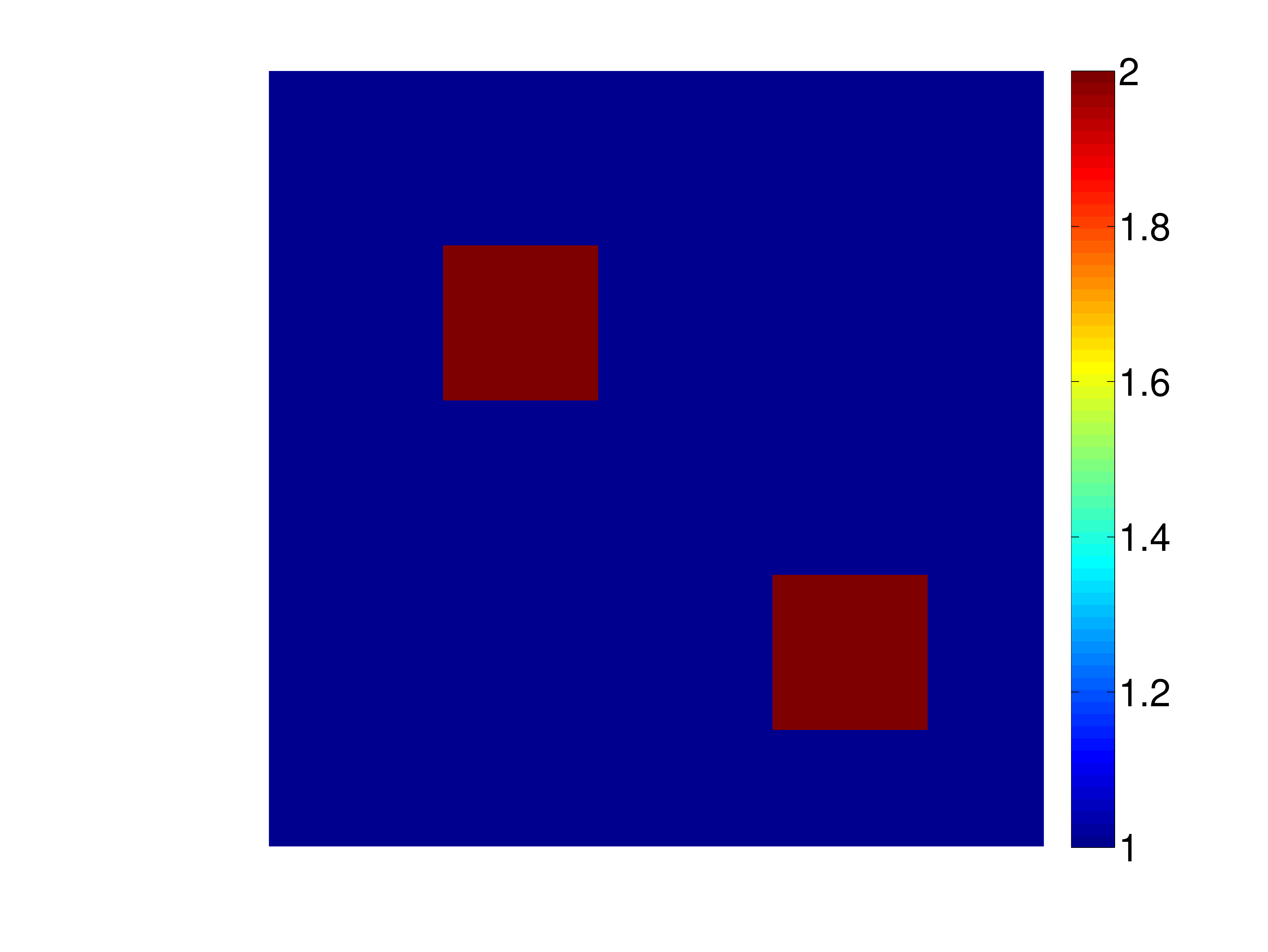}
\caption{Phantom}\label{Fig:ThirdExp1}
\end{subfigure}
\begin{subfigure}[b]{0.3\linewidth}
\includegraphics*[width=\textwidth, trim=4cm 2.5cm 0cm 1cm,clip]{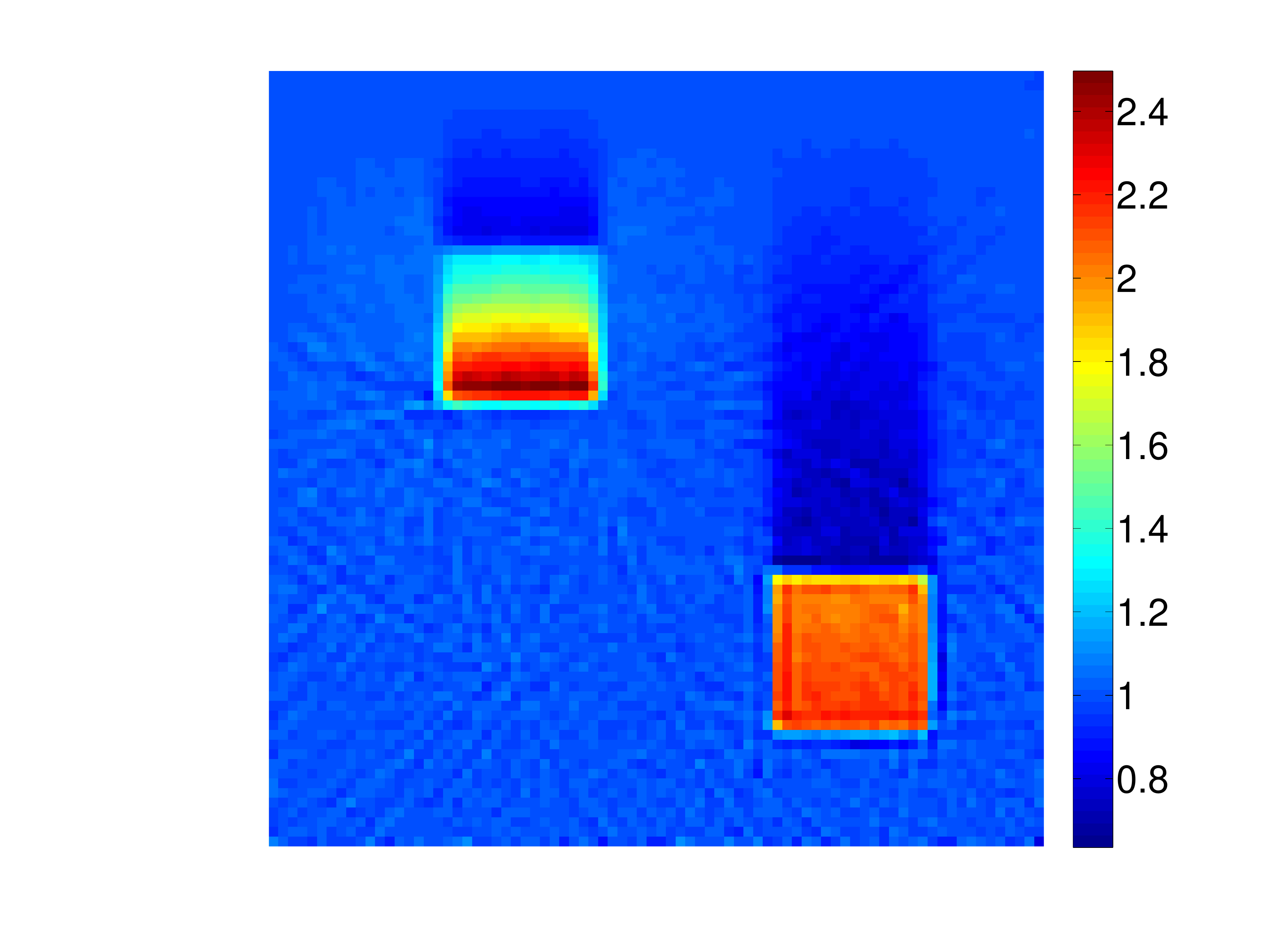}
\caption{Full data}\label{Fig:ThirdExp5}
\end{subfigure}
\begin{subfigure}[b]{0.3\linewidth}
\includegraphics*[width=\textwidth, trim=4cm 2.5cm 0cm 1cm,clip]{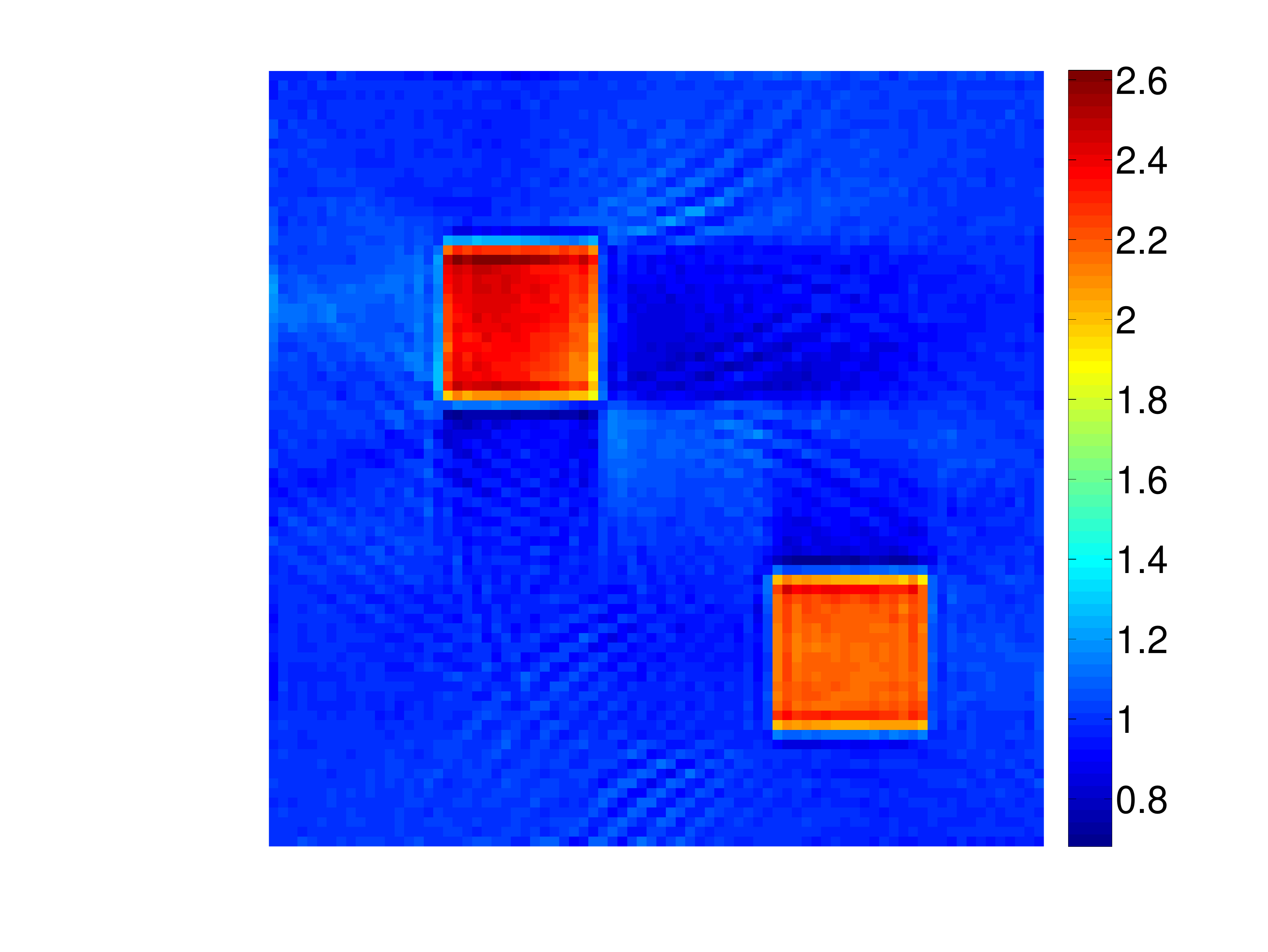}
\caption{4  $\times$  half data}\label{Fig:ThirdExp6}
\end{subfigure}
\caption{\textsc{Reconstruction\label{Fig:thirdFull} for increased scattering.}The actual absorption and scattering coefficients  are shown in the first to pictures and the linearization points have been taken as
$\mua^\lin = 1$  and $\mus^\lin=1$.}
\end{figure}

\subsubsection*{Increased scattering}

In our  final experiment we investigate the  effect of large scattering. We use the value $1$ for $\mus$ in the background,
and the values $1$ and $8$ in the upper left and lower right obstacle, respectively.
The absorption $\mua$ is chosen equal to $1$ in the background and $2$ in the obstacles. To show the consequences of a wrong guess for the linearization point in $\mus$ we choose the linearization point to be $\mua^\lin = 1$  and $\mus^\lin=1$. In particular, the
actual and linearized scattering  coefficients  take different valued in the upper left box.  The noise standard deviation is  taken as   $0.5\%$.

Reconstruction results are shown in Figure \ref{Fig:thirdFull}. It can be seen that also for larger scattering the edges of the squares are well resolved and the quantitative agreement of the reconstruction is still quite good. If the scattering rate in the interior is larger and thus the linearization point is chosen wrong then the algorithm has a tendency to overestimate the absorption. This is due to the fact that a larger scattering rate also leads to larger absorption by lowering the mean free path and thus potentially increasing the length that light has to travel to pass a high scattering area. Note also that in the vicinity of high scattering areas the intensity increases because light is scattered out of that area with larger probability than in the other direction. Thus the absorption in areas close to high scattering areas is underestimated and the absorption inside is overestimated. This phenomenon can be seen quite well in the upper left rectangle in Figure~\ref{Fig:thirdFull}.

\section{Conclusion and outlook}
\label{sec:conclusion}

In this paper we have studied the  linearized inverse  problem of  qPAT
using single as well a multiple illumination. We employed  the RTE
as accurate model for light propagation  in the framework of the
single stage approach introduced in~\cite{haltmeier2015single}.
We have shown that the linearized heating operator $\Do$
is an elliptic  pseudodifferential  operator of order zero provided
that  the background fluence is non-vanishing
(see Theorems~\ref{thm:no-scattering} and \ref{thm:scatt}). This in particular implies
the stability of the generalized (Moore-Penrose) inverse  of $\Do$. Further, we were able to
show injectivity and  two-sided stability estimates for the linearized inverse problem  using single as well a multiple illuminations. These results are presented in  Theorems~\ref{thm:sigma0-stablity} and~\ref{thm:no} for non-vanishing scattering, in Theorem~\ref{thm:scatt}) for vanishing scattering, and  in Theorem~ \ref{thm:multiple} for multiple illuminations.
In the case of non-vanishing scattering  our condition  guaranteeing injectivity requires quite small values of  scattering and absorption at the linearization point.
Relaxing such assumptions is an interesting  line of future research. Another important
aspect is the extension of our  stability estimates to the fully
non-linear case.  Finally, investigating single state qPAT with multiple illuminations for moving object (see \cite{chungmotion} for qualitative PAT with moving object) is also a challenging topic.

For numerical computations, the linearization simplifies matters considerable because
all the matrices for the solution of the RTE have to be constructed only once.
Detailed  numerical simulations have  been performed to demonstrate the
feasibility of solving the linearized problem.  From the presented  numerical results
we conclude that solving the linearized inverse problem gives useful quantitative reconstructions
even if no attempt to reconstruct the scattering coefficient has been made.  Recovering the
absorption  and the scattering coefficient simultaneously seems difficult because the scattering
leads to a lower order contribution to the data, as can also be seen from the theoretical considerations in Section~\ref{sec:stability}. Nonetheless suitable regularization  and preconditioning can lead to
reasonable reconstruction results.  Such investigations will also be subject of further work.

\section*{Acknowledgement}

L. Nguyen's research is partially supported by the NSF grants DMS 1212125 and DMS 1616904. He also thanks the University of Innsbruck  for financial support and hospitality during his visit. The work of S.~Rabanser has partially been supported by a doctoral fellowship from the University of Innsbruck.

\appendix
\section{Inversion of the  wave equation}
\label{sec:aux}

Recall the operator $\Wo_{\La, T}$ is defined by
$\Wo_{\La, T} h = p|_{\Lambda \times (0,T)}$,
where $p$ is the solution of\eqref{eq:wave-fwd} with initial data
$h$ supported inside $\Om$ and $\Lambda \subseteq \mS$ is a subset
of the closed surface  enclosing $\Om$. Lemma~\ref{lem:contW} states
that  $\Wo_{\La, T} \colon L^2_\Om (\R^d) \to  L^2\kl{\Lambda \times (0, T)}$
is linear and bounded.
The inversion of $\Wo_{\mS, T}$ is well  studied and several efficient inversion algorithm are available. Such algorithms include explicit  inversion formulas~\cite{FinHalRak07,FinPatRak04,Kun07a,Hal14,HalPer15b,Nat12,Ngu09,XuWan05},
series solution \cite{AgrKuc07,Hal09,Kun07b,XuXuWan02},
time reversal methods~\cite{BurMatHalPal07,HriKucNgu08,FinPatRak04,SteUhl09}, and
 iterative approaches based on the adjoint \cite{arridge2016adjoint,belhachmi2016direct,haltmeier2016iterative,huang2013full,RosNtzRaz13}.  For the limited data case, most of the reconstruction methods are of iterative nature (see, e.g., \cite{arridge2016adjoint,belhachmi2016direct,haltmeier2016iterative,huang2013full,RosNtzRaz13}).  In this appendix, we describe some theoretical results on the inversion of $\Wo_{\La,T}$ that are relevant for our purpose.

\begin{proposition}[Uniqueness of reconstruction] \label{prop:wave-uniqueness}
The data $\Wo_{\La, T}(h)  $ uniquely determines $h$ on $\Om_{\La,T}
\coloneqq \set{x \in \Om \mid  \dist(x,\Lambda) \leq T}$.
\end{proposition}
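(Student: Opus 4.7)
The plan is to reduce the claim to a unique continuation argument for the free wave equation. By linearity of $\Wo_{\La,T}$, it suffices to show that $\Wo_{\La,T} h = 0$ forces $h$ to vanish on $\Om_{\La,T}$. Let $p$ be the solution of \eqref{eq:wave-fwd} with initial data $h$, so $p|_{\La\times(0,T)} = 0$. Because $\bar\Om \subseteq D$ with $D$ open, we have $\dist(\bar\Om, \R^d\setminus D)>0$, and since $h$ is supported in $\bar\Om$, the wave $p$ solves the homogeneous wave equation in the exterior $\R^d\setminus\bar D$ with vanishing initial data and with vanishing Dirichlet data on $\La\times(0,T) \subseteq \mS\times(0,T)$. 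A local energy estimate (equivalently, the local domain-of-dependence property) then yields $p\equiv 0$ in a one-sided spacetime neighborhood of $\La\times(0,T)$ lying in $(\R^d\setminus\bar D)\times(0,T)$.

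The second step is to upgrade this one-sided vanishing to full Cauchy data on $\La\times(0,T)$: since the Cauchy traces of $p$ across $\mS$ are continuous, the vanishing of $p$ together with $\partial_\nu p$ on the exterior side forces the same on the interior. Now $p$ has vanishing Cauchy data on the non-characteristic hypersurface $\La\times(0,T)$ for the d'Alembertian, and since the coefficients are constant (hence analytic), Holmgren's uniqueness theorem, iterated along characteristic light cones, gives $p\equiv 0$ in the spacetime diamond
\[
  K \coloneqq \set{(x,t)\in\R^d\times[0,T) \mid \dist(x,\La)+t<T} \,.
\]

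Evaluating at $t=0$ yields $h(x) = p(x,0) = 0$ for every $x$ with $\dist(x,\La) < T$, and a density (closure) argument extends the vanishing to the closed set $\Om_{\La,T}$. I expect the main technical obstacle to be the precise bookkeeping of the Holmgren iteration, namely verifying that the union of local unique-continuation neighborhoods actually fills out $K$ rather than some strictly smaller region. A clean alternative that encapsulates this delicate propagation is to directly invoke the time-reversal uniqueness result of Stefanov--Uhlmann~\cite{SteUhl09}, which was derived in exactly this thermoacoustic setting and identifies $\Om_{\La,T}$ as the uniqueness set; this removes the need to redo the propagation-of-vanishing analysis by hand.
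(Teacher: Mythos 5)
The decisive step of your argument---that $p$ vanishes in a one-sided exterior spacetime neighborhood of $\La\times(0,T)$---does not follow from a local energy/domain-of-dependence estimate, and this is precisely where the difficulty of the partial-data statement sits. In the exterior initial--boundary value problem on $(\R^d\setminus \bar D)\times(0,T)$ the Dirichlet data are known (zero) only on $\La\times(0,T)$; on $(\mS\setminus\La)\times(0,T)$ they are unknown. The energy argument in truncated backward cones therefore gives $p(x,t)=0$ only at exterior points whose backward cone meets $\mS$ inside $\La$, i.e.\ roughly for $t<\dist(x,\mS\setminus\La)$; near a point $y\in\La$ this caps the admissible time at $\dist(y,\mS\setminus\La)$, which for genuinely partial $\La$ is in general far smaller than $T$. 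Waves emanating from $\supp(h)$ may exit $D$ through $\mS\setminus\La$, or sweep along the surface, and re-enter any exterior neighborhood of $\La$ at later times without violating $p|_{\La\times(0,T)}=0$, so nothing forces a full spacetime neighborhood of $\La\times(0,T)$ to be quiet. Consequently you do not obtain vanishing Cauchy data on all of $\La\times(0,T)$, and the subsequent Holmgren step only reaches a region strictly smaller than $\Om_{\La,T}$. (For full data $\La=\mS$ your exterior argument is correct and classical.)

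A second, repairable, error: even granting vanishing Cauchy data on $\La\times(0,T)$, Holmgren-type continuation from a timelike surface propagates at unit speed in \emph{both} time directions, yielding $p=0$ only on $\set{(x,t)\mid \dist(x,\La)<\min(t,T-t)}$, which is empty at $t=0$. Your diamond $\set{(x,t)\mid \dist(x,\La)+t<T}$ is legitimate only after extending $p$ evenly in time (using $\partial_t p(\cdot,0)=0$), so that the data vanish on $\La\times(-T,T)$; this use of the initial condition is essential and is missing from your write-up. Note finally that the paper does not carry out any such argument: its proof of this proposition is a citation to \cite{FinPatRak04,SteUhl09}, so your closing fallback---invoking the Stefanov--Uhlmann uniqueness result directly---is in effect the paper's proof, whereas the self-contained argument as written does not establish uniqueness on all of $\Om_{\La,T}$.
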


\begin{proof}
See \cite{FinPatRak04,SteUhl09}.
\end{proof}

Proposition~\ref{prop:wave-uniqueness}  in particular implies that $\Wo_{\La, \Omega}$ is injective from $L^2(\Om)$ to $L^2(\Lambda \times (0,T))$ if $\Om_{\La,T} = \Om$. This holds, for example, when $T \geq \max_{x\in \Om} \dist(x,\La)$. However, even if the inverse operator  $\Wo^{-1}$ exists, its computation  may be a severely ill-posed problem.  Stability of the reconstruction can be obtained if additionally the following visibility condition is satisfied.

\begin{condition}[Visibility condition] \label{A:Visible}
For each $x \in \Om$ a line passing through $x$ intersects
$\Lambda$ at a point of distance less than $T$ from $x$.
\end{condition}

Under the visibility condition the following stability result holds.

\begin{proposition}[Stability of inversion] \label{prop:wave-stability}
If the visibility condition \ref{A:Visible} holds, then
$\Wo^{-1}_{\La,T} \colon L^2(\Lambda \times (0,T)) \to L^2(\Om)$ is bounded.
\end{proposition}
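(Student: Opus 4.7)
The plan is to combine the microlocal parametrix construction already invoked in Theorem~\ref{thm:no-scattering}\ref{thm:no-scattering4} with Fredholm theory and the injectivity supplied by Proposition~\ref{prop:wave-uniqueness}. First, observe that under Condition~\ref{A:Visible} we have $\Om_{\La,T}=\Om$ in Proposition~\ref{prop:wave-uniqueness}, so $\Wo_{\La,T}\colon L^2_\Om(\R^d)\to L^2(\Lambda\times(0,T))$ is already injective. By the open mapping theorem, boundedness of $\Wo_{\La,T}^{-1}$ is therefore equivalent to $\Wo_{\La,T}$ having closed range, and that is what I would aim to establish.

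To obtain closed range, I would introduce the time reversal operator $\Ao$ and a smooth cutoff $\chi_{\La,T}\in C^\infty(\mS\times[0,\infty))$ with $\supp(\chi_{\La,T})=\Lambda\times[0,T]$, exactly as in Theorem~\ref{thm:no-scattering}\ref{thm:no-scattering4}. The key analytic input (Stefanov--Uhlmann \cite{SteUhl09}) is that $\Ao\,\chi_{\La,T}\Wo_{\La,T}$ is a pseudodifferential operator of order $0$ on $\Om$ whose principal symbol at $(x,\xi)$ encodes whether the two singularities emanating from $(x,\pm\xi)$ reach $\Lambda$ within time $T$. Condition~\ref{A:Visible} is exactly the assertion that at least one of these two bicharacteristics hits $\Lambda$ within time $T$ for every $(x,\xi)\in\cT^\ast\Om\setminus 0$, which makes this principal symbol pointwise positive, hence the operator elliptic.

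From ellipticity I would construct a parametrix $P$ on $\Om$ with
\begin{equation*}
P\,\Ao\,\chi_{\La,T}\Wo_{\La,T}=\Io+\Ko,
\end{equation*}
where $\Ko$ is a smoothing operator and thus compact on $L^2(\Om)$. Combined with the injectivity of $\Wo_{\La,T}$ on $L^2_\Om(\R^d)$ and the Fredholm alternative, a standard argument (cf.\ the proof of Theorem~\ref{thm:sigma0-stablity}) yields a constant $C>0$ with
\begin{equation*}
\snorm{h}_{L^2(\Om)}\leq C\snorm{\Ao\,\chi_{\La,T}\Wo_{\La,T}h}_{L^2(\Om)}\leq C'\snorm{\Wo_{\La,T}h}_{L^2(\Lambda\times(0,T))},
\end{equation*}
for every $h\in L^2_\Om(\R^d)$, where in the last step I use the continuity of $\Ao$ and of multiplication by $\chi_{\La,T}$. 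This simultaneously gives closed range and a quantitative two-sided estimate.

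The main obstacle, and the step that deserves the most care, is the microlocal analysis establishing that $\Ao\,\chi_{\La,T}\Wo_{\La,T}$ is a pseudodifferential operator with the claimed symbol: one must track the two geometric bicharacteristics through the free-space wave equation, verify that $\Ao$ applied to data supported in $\Lambda\times[0,T]$ is a Fourier integral operator whose canonical relation is the inverse of $\Wo_{\La,T}$'s, and check that the cutoff $\chi_{\La,T}$ does not destroy the principal symbol computation for covectors satisfying the visibility condition. Once this microlocal ingredient is in place, the remainder of the argument is routine functional analysis.
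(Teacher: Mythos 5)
Your proposal is essentially the Stefanov--Uhlmann argument of \cite{SteUhl09}, which is precisely what the paper cites for this proposition (it gives no independent proof), and the ingredients you invoke are the ones the paper itself imports: Lemma~\ref{T:SU} for the pseudodifferential property and ellipticity of $\Ao\,\chi_{\La,T}\Wo_{\La,T}$ under Condition~\ref{A:Visible}, Proposition~\ref{prop:wave-uniqueness} for injectivity, and a compactness--uniqueness step. One small repair is needed: the first inequality in your displayed chain, $\snorm{h}_{L^2(\Om)}\leq C\snorm{\Ao\,\chi_{\La,T}\Wo_{\La,T}h}_{L^2(\Om)}$, presupposes injectivity of $\Ao\,\chi_{\La,T}\Wo_{\La,T}$, which you have not established (only $\Wo_{\La,T}$ is known to be injective, and the parametrix identity leaves a possibly nontrivial finite-dimensional smooth kernel); instead keep the compact remainder, write $\snorm{h}_{L^2(\Om)}\leq C\snorm{\Wo_{\La,T}h}_{L^2(\Lambda\times(0,T))}+\snorm{\text{(compact term)}}$, and then remove the compact term using the injectivity of $\Wo_{\La,T}$ exactly as in the proof of Theorem~\ref{thm:multiple}, via \cite[Proposition V.3.1]{taylor1981pseudodifferential}. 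With that reordering the argument is sound and coincides with the cited approach.
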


\begin{proof}
See \cite{SteUhl09}.
\end{proof}

Proposition~\ref{prop:wave-stability} in particular implies that under the
visibility condition, the inverse operator $\Wo^{-1}_{\La,T}$ is Lipschitz continuous.
On the other hand, if the visibility condition does not hold, then
$\Wo^{-1}_{\La,T}$ is not even conditionally H\"older continuous (see \cite{nguyen2011singularities}).

Although it is not clear how to directly evaluate $\Wo_{\La,T}^{-1}$ for partial data, microlocal inversion of $\Wo_{\La,T}$ is quite straight forward. That is, one can recover the visible singularities of $h$ from $\Wo_{\La,T}(h)$ by a direct method.  To this end,  let $\chi_{\La,T} \in C^\infty(\mS \times [0,\infty))$ be a nonnegative function with  $\supp(\chi_{\La,T}) = \Lambda \times [0,T]$. Then, one can decompose
$$\chi_{\La,T} \Wo_{\La,T}  h   = \Wo^{(+)} h + \Wo^{(-)} h\,,$$
where
$\Wo^{(+)}, \Wo^{(-)} \colon C^\infty(\Om) \to C^\infty(\Lambda \times (0,T))$
are Fourier integral operators of order zero. Each of them describes how the singularities of $h$ induce singularities of $\Wo_{\La, T} h$. Each singularity $(x,\xi) \in \wf(h)$ breaks into two equal parts traveling on opposite rays $\ell_\pm(x,\xi)  \coloneqq  \{x \pm r \xi \mid  r>0\}$. Those singularities hit the observation surface $\mS$ at location $y_\pm(x,\xi)$ and time $t_\pm(x,\xi) = \abs{x- y_\pm(x,\xi)}$. Their projection on the cotangent bundle of $\mS \times (0,\infty)$ at $(y_\pm(x,\xi), t_\pm(x,\xi))$ are the induced singularities of $\chi_{\La,T} \Wo_{\La, T}h$ if $\chi_{\La,T}(y_+(x,\xi),t_+(x,\xi))>0$ or $\chi_{\La,T}(y_-(x,\xi),t_-(x,\xi))>0$. In that case, $(x,\xi)$ is called a visible singularity of $h$.
Any visible singularity can be reconstructed by time-reversal, described as follows.
For given data $g \colon \mS \times (0, T) \to \R$,
consider the  time-reversed problem
\begin{equation} \label{eq:wave-back}
	\left\{ \begin{aligned}
	 \partial_t^2 q (x,t) - \Delta q(x,t)
	&=
	0 \,,
	 && \text{ for }
	\kl{x,t} \in
	D \times \kl{0, T}
	\\
	q\kl{x,T}=	\partial_t q\kl{x,T}
	&=
	0 \,,
	&& \text{ for }
	x  \in D
	\\
	q\kl{x,t}
	&=g(x,t) \,,
	&& \text{ for }
	(x,t)  \in \mS \times (0,T) \,.
\end{aligned} \right.
\end{equation}
We define the time reversal operator $\Ao$ by $\Ao  ( g ) \coloneqq q(\edot,0)$.

\begin{lemma}[Recovery of singularities] \label{T:SU}  $\Ao \chi_{\La,T} \Wo_{\La,T} $ is a pseudo-differential operator of order zero, whose principal symbol is $\tfrac{1}{2}  \sum_{\sigma = \pm} \chi_{\La,T}(y_\sigma(x,\xi),t_\sigma(x,\xi))$.
\end{lemma}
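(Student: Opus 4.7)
The plan is to treat $\Wo_{\La,T}$, the cutoff multiplication by $\chi_{\La,T}$, and the time-reversal operator $\Ao$ as Fourier integral operators (FIOs) and then compute the canonical relation and principal symbol of their composition. The essential observation is that a singularity $(x,\xi) \in \wf(h)$ propagates along two null bicharacteristics of the wave operator, so that $\Wo_{\La,T}$ naturally splits as $\Wo_{\La,T} = \Wo^{(+)} + \Wo^{(-)}$ modulo smoothing, where each $\Wo^{(\sigma)}$ is a zero-order FIO whose canonical relation sends $(x,\xi)$ to the covector dual to the $\sigma$-ray hitting $\mS \times (0,\infty)$ at $(y_\sigma(x,\xi), t_\sigma(x,\xi))$.

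First I would write down the solution to \eqref{eq:wave-fwd} via a parametrix of half-wave type, namely $p(x,t) = \sum_{\sigma=\pm} \frac{1}{(2\pi)^d}\int e^{\imi(\varphi_\sigma(x,t,\xi) - \sinner{y}{\xi})} a_\sigma(x,t,\xi) \, h(y) \, \rmd y \, \rmd \xi$, with phases $\varphi_\pm(x,t,\xi) = \sinner{x}{\xi} \pm t\sabs{\xi}$ and amplitudes $a_\pm(x,t,\xi) = \tfrac{1}{2}$ at leading order (these come from splitting $\cos(t\sqrt{-\Delta})$ into $\tfrac{1}{2}(e^{\imi t \sqrt{-\Delta}} + e^{-\imi t \sqrt{-\Delta}})$). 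Restricting to $\La \times (0,T)$ and multiplying by $\chi_{\La,T}$ gives $\chi_{\La,T} \Wo_{\La,T}$ as a sum of two FIOs whose canonical relations $C_\pm$ are parametrized by $(x,\xi) \mapsto (y_\pm(x,\xi), t_\pm(x,\xi); \eta_\pm, \tau_\pm)$ where $\tau_\pm = \mp\sabs{\xi}$ and $\eta_\pm$ is the tangential projection to $T^*(\mS\times \R)$ of the spacetime covector along the $\pm$ ray.

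Next I would identify $\Ao$ as an FIO by the same kind of parametrix for \eqref{eq:wave-back}: $\Ao$ is, modulo smoothing, the adjoint of the restriction-to-boundary map applied after a half-wave propagator. Its canonical relation is the inverse of $C_+ \cup C_-$; more precisely, a boundary singularity supported near $(y,t;\eta,\tau)$ propagates backward in time and, at time zero, produces singularities of $\Ao(g)$ at the endpoints of the two bicharacteristics through $(y,t)$ that carry the given tangential covector. The key geometric fact to verify is that $C_\pm^{-1} \circ C_\pm = \Delta_{T^*\Om}$ (the diagonal) while $C_\pm^{-1} \circ C_\mp$ consists of pairs $((x,\xi),(x,-\xi))$, i.e.\ the antipodal map on the cotangent bundle. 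Since $h$ is real-valued and supported in the relatively compact $\Om$, the antipodal contributions can be absorbed correctly and, modulo lower order, the composition $\Ao \chi_{\La,T} \Wo_{\La,T}$ is a pseudodifferential operator on $\Om$.

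Finally I would compute the principal symbol by multiplying the half-symbols through the composition. Each branch contributes $\tfrac{1}{2}$ from the half-wave splitting, and the cutoff contributes a factor $\chi_{\La,T}(y_\sigma(x,\xi), t_\sigma(x,\xi))$; the time-reversal parametrix contributes an additional factor that, together with the Jacobian of the canonical transformation, works out to $1$, yielding the symbol $\tfrac{1}{2}\sum_{\sigma=\pm}\chi_{\La,T}(y_\sigma(x,\xi),t_\sigma(x,\xi))$. The main obstacle is the bookkeeping in this last step: one must carefully verify the Maslov/half-density factors so that the amplitudes from the forward propagator, the restriction to the boundary, and the time-reversal parametrix combine to give exactly the factor $\tfrac{1}{2}$ per branch, and one must check that the cross terms $C_\pm^{-1}\circ C_\mp$ do not contaminate the diagonal symbol. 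For these technical points I would cite \cite{SteUhl09}, which contains exactly this computation in the relevant setting.
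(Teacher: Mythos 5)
The paper gives no argument of its own here---its ``proof'' is literally the citation to \cite{SteUhl09}---and your outline (half-wave splitting of $\Wo_{\La,T}$ into two zero-order FIOs, the time reversal $\Ao$ as an FIO, composition of canonical relations, with the symbol/Maslov bookkeeping deferred to \cite{SteUhl09}) is exactly the argument of that reference, so you are taking essentially the same route. One small correction: the cross terms $C_\pm^{-1}\circ C_\mp$ are not the antipodal map tamed by real-valuedness of $h$ (the two branches carry opposite signs of the dual time variable, so these compositions are in fact empty); the genuine delicacy is that the backward Dirichlet solve \eqref{eq:wave-back} transmits each boundary singularity of the cutoff data along \emph{two} interior null rays, and showing that the non-retracing branch does not contaminate the principal symbol is precisely the technical point you correctly leave to \cite{SteUhl09}.
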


\begin{proof}
See \cite{SteUhl09}.
\end{proof}

Let $(x,\xi)$ be a visible singularity of $h$. Since $\Ao \chi_{\La,T} \Wo_{\La,T}$ is positive at $(x,\xi)$, $(x,\xi)$ is also a singularity of $\Ao \chi_{\La,T} \Wo_{\La,T}(h)$. That is, all the visible singularities are reconstructed by the time-reversal method.
We, finally, note that the multiplication with a smooth function $\chi_{\La,T}$ is essential, since otherwise the time-reversal procedure introduces artifacts,
see~\cite{frikel2015artifacts,nguyen2011singularities}.


\begin{thebibliography}{10}

\bibitem{AgrKuc07}
{\sc M.~Agranovsky and P.~Kuchment}, {\em Uniqueness of reconstruction and an
  inversion procedure for thermoacoustic and photoacoustic tomography with
  variable sound speed}, Inverse Probl., 23 (2007), pp.~2089--2102.

\bibitem{AgrKucKun09}
{\sc M.~Agranovsky, P.~Kuchment, and L.~Kunyansky}, {\em On reconstruction
  formulas and algorithms for the thermoacoustic tomography}, in Photoacoustic
  imaging and spectroscopy, L.~V. Wang, ed., CRC Press, 2009, ch.~8,
  pp.~89--101.

\bibitem{AmmBosJugKang11}
{\sc H.~Ammari, E.~Bossy, V.~Jugnon, and H.~Kang}, {\em Reconstruction of the
  optical absorption coefficient of a small absorber from the absorbed energy
  density}, SIAM J. Appl. Math., 71 (2011), pp.~676--693.

\bibitem{Arr99}
{\sc S.~R. Arridge}, {\em Optical tomography in medical imaging}, Inverse
  Probl., 15 (1999), pp.~R41--R93.

\bibitem{arridge2016adjoint}
{\sc S.~R. Arridge, M.~M. Betcke, B.~T. Cox, F.~Lucka, and B.~E. Treeby}, {\em
  On the adjoint operator in photoacoustic tomography}, Inverse Probl., 32
  (2016), p.~115012 (19pp).

\bibitem{arridge2009optical}
{\sc S.~R. Arridge and J.~C. Schotland}, {\em Optical tomography: forward and
  inverse problems}, Inverse Probl., 25 (2009), p.~123010.

\bibitem{bal2013linearized}
{\sc G.~Bal, C.~Guo, and F.~Monard}, {\em Linearized internal functionals for
  anisotropic conductivities}, arXiv preprint arXiv:1302.3354,  (2013).

\bibitem{BalJolJug10}
{\sc G.~Bal, A.~Jollivet, and V.~Jugnon}, {\em Inverse transport theory of
  photoacoustics}, Inverse Probl., 26 (2010), p.~025011.

\bibitem{bal2016photo}
{\sc G.~Bal and A.~Moradifam}, {\em Photo-acoustic tomography in a rotating
  measurement setting}, Inverse Probl., 32 (2016), p.~105012.

\bibitem{bal2014local}
{\sc G.~Bal and S.~Moskow}, {\em Local inversions in ultrasound-modulated
  optical tomography}, Inverse Problems, 30 (2014), p.~025005.

\bibitem{BalRen11}
{\sc G.~Bal and K.~Ren}, {\em Multi-source quantitative photoacoustic
  tomography in a diffusive regime}, Inverse Probl., 27 (2011), pp.~075003, 20.

\bibitem{bal2011quantitative}
{\sc G.~Bal, K.~Ren, G.~Uhlmann, and T.~Zhou}, {\em Quantitative
  thermo-acoustics and related problems}, Inverse Probl., 27 (2011), p.~055007.

\bibitem{bal2014hybrid}
{\sc G.~Bal and T.~Zhou}, {\em Hybrid inverse problems for a system of
  maxwell's equations}, Inverse Probl., 30 (2014), p.~055013.

\bibitem{Bea11}
{\sc P.~Beard}, {\em Biomedical photoacoustic imaging}, Interface focus, 1
  (2011), pp.~602--631.

\bibitem{belhachmi2016direct}
{\sc Z.~Belhachmi, T.~Glatz, and O.~Scherzer}, {\em A direct method for
  photoacoustic tomography with inhomogeneous sound speed}, Inverse Probl., 32
  (2016), p.~045005.

\bibitem{bergounioux2014optimal}
{\sc M.~Bergounioux, X.~Bonnefond, T.~Haberkorn, and Y.~Privat}, {\em An
  optimal control problem in photoacoustic tomography}, Math. Mod. Meth. Appl.
  S., 24 (2014), pp.~2525--2548.

\bibitem{BurBauGruHalPal07}
{\sc P.~Burgholzer, J.~Bauer-Marschallinger, H.~Gr{\"u}n, M.~Haltmeier, and
  G.~Paltauf}, {\em Temporal back-projection algorithms for photoacoustic
  tomography with integrating line detectors}, Inverse Probl., 23 (2007),
  pp.~S65--S80.

\bibitem{BurMatHalPal07}
{\sc P.~Burgholzer, G.~J. Matt, M.~Haltmeier, and G.~Paltauf}, {\em Exact and
  approximate imaging methods for photoacoustic tomography using an arbitrary
  detection surface}, Phys. Rev. E, 75 (2007), p.~046706.

\bibitem{CheYa12}
{\sc J.~Chen and Y.~Yang}, {\em Quantitative photo-acoustic tomography with
  partial data}, Inverse Probl., 28 (2012), p.~115014.

\bibitem{chungmotion}
{\sc J.~{Chung} and L.~{Nguyen}}, {\em {Motion Estimation and Correction in
  Photoacoustic Tomographic Reconstruction}}, ArXiv e-prints,  (2016),
  \url{https://arxiv.org/abs/1609.08529}.

\bibitem{CoxArrBea07a}
{\sc B.~T. Cox, S.~A. Arridge, and P.~C. Beard}, {\em Gradient-based
  quantitative photoacoustic image reconstruction for molecular imaging}, Proc.
  SPIE, 6437 (2007), p.~64371T.

\bibitem{CoxArrKoeBea06}
{\sc B.~T. Cox, S.~R. Arridge, P.~K\"ostli, and P.~C. Beard}, {\em
  Two-dimensional quantitative photoacoustic image reconstruction of absorption
  distributions in scattering media by use of a simple iterative method}, Appl.
  Opt., 45 (2006), pp.~1866--1875.

\bibitem{cox2012quantitative}
{\sc B.~T. Cox, J.~G. Laufer, S.~R. Arridge, and P.~C. Beard}, {\em
  Quantitative spectroscopic photoacoustic imaging: a review}, J. Biomed. Opt.,
  17 (2012), p.~0612021.

\bibitem{DaLiVol6}
{\sc R.~Dautray and J.~Lions}, {\em Mathematical analysis and numerical methods
  for science and technology. {V}ol. 6}, Springer-Verlag, Berlin, 1993.

\bibitem{DezDez15}
{\sc A.~De~Cezaro and T.~F. De~Cezaro}, {\em Regularization approaches for
  quantitative photoacoustic tomography using the radiative transfer equation},
  J. Math. Anal. Appl., 429 (2015), pp.~415--438.

\bibitem{devore2001averaging}
{\sc R.~DeVore and G.~Petrova}, {\em The averaging lemma}, J. Amer. Math. Soc.,
  14 (2001), pp.~279--296.

\bibitem{ding2015one}
{\sc T.~Ding, K.~Ren, and S.~Vall{\'e}lian}, {\em A one-step reconstruction
  algorithm for quantitative photoacoustic imaging}, Inverse Probl., 31 (2015),
  p.~095005.

\bibitem{dorn1998transport}
{\sc O.~Dorn}, {\em A transport-backtransport method for optical tomography},
  Inverse Probl., 14 (1998), p.~1107.

\bibitem{EggSch14b}
{\sc H.~Egger and M.~Schlottbom}, {\em Stationary radiative transfer with
  vanishing absorption}, Math. Mod. Meth. Appl. S., 24 (2014), pp.~973--990.

\bibitem{EggSch15}
{\sc H.~Egger and M.~Schlottbom}, {\em Numerical methods for parameter
  identification in stationary radiative transfer}, Comput. Optim. Appl., 62
  (2015), pp.~67--83.

\bibitem{ermilov2016three}
{\sc S.~Ermilov, R.~Su, A.~Conjusteau, F.~Anis, V.~Nadvoretskiy, M.~Anastasio,
  and A.~Oraevsky}, {\em Three-dimensional optoacoustic and laser-induced
  ultrasound tomography system for preclinical research in mice design and
  phantom validation}, Ultrasonic imaging, 38 (2016), pp.~77--95.

\bibitem{Eva98}
{\sc L.~C. Evans}, {\em Partial Differential Equations}, vol.~19 of Graduate
  Studies in Mathematics, Amer. Math. Soc., Providence, RI, 1998.

\bibitem{FilKunSey14}
{\sc F.~Filbir, S.~Kunis, and R.~Seyfried}, {\em Effective discretization of
  direct reconstruction schemes for photoacoustic imaging in spherical
  geometries}, SIAM J. Numer. Anal., 52 (2014), pp.~2722--2742.

\bibitem{FinHalRak07}
{\sc D.~Finch, M.~Haltmeier, and Rakesh}, {\em Inversion of spherical means and
  the wave equation in even dimensions}, SIAM J. Appl. Math., 68 (2007),
  pp.~392--412.

\bibitem{FinPatRak04}
{\sc D.~Finch, S.~K. Patch, and Rakesh}, {\em Determining a function from its
  mean values over a family of spheres}, SIAM J. Math. Anal., 35 (2004),
  pp.~1213--1240.

\bibitem{FinRak07}
{\sc D.~Finch and Rakesh}, {\em The spherical mean value operator with centers
  on a sphere}, Inverse Probl., 23 (2007), pp.~37--49.

\bibitem{frikel2015artifacts}
{\sc J.~Frikel and E.~T. Quinto}, {\em Artifacts in incomplete data tomography
  with applications to photoacoustic tomography and sonar}, SIAM J. Appl.
  Math., 75 (2015), pp.~703--725.

\bibitem{gao2015limited}
{\sc H.~Gao, J.~Feng, and L.~Song}, {\em Limited-view multi-source quantitative
  photoacoustic tomography}, Inverse Probl., 31 (2015), p.~065004.

\bibitem{golse1988regularity}
{\sc F.~Golse, P.~Lions, B.~Perthame, and R.~Sentis}, {\em Regularity of the
  moments of the solution of a transport equation}, J. Func. Anal., 76 (1988),
  pp.~110--125.

\bibitem{Hal09}
{\sc M.~Haltmeier}, {\em Frequency domain reconstruction for photo- and
  thermo\-acoustic tomography with line detectors}, Math. Models Methods Appl.
  Sci., 19 (2009), pp.~283--306.

\bibitem{Hal14}
{\sc M.~Haltmeier}, {\em Universal inversion formulas for recovering a function
  from spherical means}, SIAM J. Math. Anal., 46 (2014), pp.~214--232.

\bibitem{haltmeier2015single}
{\sc M.~Haltmeier, L.~Neumann, and S.~Rabanser}, {\em Single-stage
  reconstruction algorithm for quantitative photoacoustic tomography}, Inverse
  Probl., 31 (2015), p.~065005.

\bibitem{haltmeier2016iterative}
{\sc M.~Haltmeier and L.~V. Nguyen}, {\em Iterative methods for photoacoustic
  tomography with variable sound speed}.
\newblock arXiv:1611.07563, 2016.

\bibitem{HalPer15b}
{\sc M.~Haltmeier and S.~Pereverzyev, Jr.}, {\em The universal back-projection
  formula for spherical means and the wave equation on certain quadric
  hypersurfaces}, J. Math. Anal. Appl., 429 (2015), pp.~366--382.

\bibitem{HalSchuSch05}
{\sc M.~Haltmeier, T.~Schuster, and O.~Scherzer}, {\em Filtered backprojection
  for thermoacoustic computed tomography in spherical geometry}, Math. Method.
  Appl. Sci., 28 (2005), pp.~1919--1937.

\bibitem{hormander1971fourier}
{\sc L.~H{\"o}rmander}, {\em Fourier integral operators. {I}}, Acta Math., 127
  (1971), pp.~79--183.

\bibitem{HriKucNgu08}
{\sc Y.~Hristova, P.~Kuchment, and L.~Nguyen}, {\em Reconstruction and time
  reversal in thermoacoustic tomography in acoustically homogeneous and
  inhomogeneous media}, Inverse Probl., 24 (2008), p.~055006 (25pp).

\bibitem{huang2013full}
{\sc C.~Huang, K.~Wang, L.~Nie, and M.~A. Wang, L. V.and~Anastasio}, {\em
  Full-wave iterative image reconstruction in photoacoustic tomography with
  acoustically inhomogeneous media}, {IEEE} Trans. Med. Imag, 32 (2013),
  pp.~1097--1110.

\bibitem{Joh82}
{\sc F.~John}, {\em Partial Differential Equations}, vol.~1 of Applied
  Mathematical Sciences, Springer Verlag, New York, fourth~ed., 1982.

\bibitem{Kan10}
{\sc G.~Kanschat}, {\em Solution of radiative transfer problems with finite
  elements}, in Numerical methods in multidimensional radiative transfer,
  Springer, Berlin, 2009, pp.~49--98.

\bibitem{Kow14}
{\sc R.~Kowar}, {\em On time reversal in photoacoustic tomography for tissue
  similar to water}, SIAM J. Imaging Sci., 7 (2014), pp.~509--527.

\bibitem{KruKopAisReiKruKis00}
{\sc R.~A. Kruger, K.~K. Kopecky, A.~M. Aisen, R.~D. R., G.~A. Kruger, and
  W.~L. Kiser}, {\em Thermoacoustic {C}{T} with {R}adio waves: A medical
  imaging paradigm}, Radiology, 200 (1999), pp.~275--278.

\bibitem{KruLiuFanApp95}
{\sc R.~A. Kruger, P.~Lui, Y.~R. Fang, and R.~C. Appledorn}, {\em Photoacoustic
  ultrasound ({PAUS}) -- reconstruction tomography}, Med. Phys., 22 (1995),
  pp.~1605--1609.

\bibitem{kuchment2014radon}
{\sc P.~Kuchment}, {\em The Radon transform and medical imaging}, vol.~85,
  SIAM, 2014.

\bibitem{KucKun08}
{\sc P.~Kuchment and L.~A. Kunyansky}, {\em Mathematics of thermoacoustic and
  photoacoustic tomography}, Eur. J. Appl. Math., 19 (2008), pp.~191--224.

\bibitem{kuchment2012stabilizing}
{\sc P.~Kuchment and D.~Steinhauer}, {\em Stabilizing inverse problems by
  internal data}, Inverse Probl., 28 (2012), p.~084007.

\bibitem{kuchment2015stabilizing}
{\sc P.~Kuchment and D.~Steinhauer}, {\em Stabilizing inverse problems by
  internal data. ii: non-local internal data and generic linearized
  uniqueness}, Anal. Math. Phys., 5 (2015), pp.~391--425.

\bibitem{Kun07a}
{\sc L.~A. Kunyansky}, {\em Explicit inversion formulae for the spherical mean
  {R}adon transform}, Inverse Probl., 23 (2007), pp.~373--383.

\bibitem{Kun07b}
{\sc L.~A. Kunyansky}, {\em A series solution and a fast algorithm for the
  inversion of the spherical mean {R}adon transform}, Inverse Probl., 23
  (2007), pp.~S11--S20.

\bibitem{lou2016impact}
{\sc Y.~Lou, K.~Wang, A.~A. Oraevsky, and M.~A. Anastasio}, {\em Impact of
  nonstationary optical illumination on image reconstruction in optoacoustic
  tomography}, J. Opt. Soc. Am. A, 33 (2016), pp.~2333--2347.

\bibitem{MamRen14}
{\sc A.~V. Mamonov and K.~Ren}, {\em Quantitative photoacoustic imaging in
  radiative transport regime}, Comm. Math. Sci., 12 (2014), pp.~201--234.

\bibitem{mcdowall2010stability}
{\sc S.~McDowall, P.~Stefanov, and A.~Tamasan}, {\em Stability of the gauge
  equivalent classes in inverse stationary transport}, Inverse Probl., 26
  (2010), p.~025006.

\bibitem{Mok97}
{\sc M.~Mokthar-Kharroubi}, {\em Mathematical topics in neutron transport
  theory}, World Scientific, 1997.

\bibitem{montalto2013stability}
{\sc C.~Montalto and P.~Stefanov}, {\em Stability of coupled-physics inverse
  problems with one internal measurement}, Inverse Probl., 29 (2013),
  p.~125004.

\bibitem{NatSch14}
{\sc W.~Naetar and O.~Scherzer}, {\em Quantitative photoacoustic tomography
  with piecewise constant material parameters}, SIAM J. Imaging Sci., 7 (2014),
  pp.~1755--1774.

\bibitem{Nat12}
{\sc F.~Natterer}, {\em Photo-acoustic inversion in convex domains}, Inverse
  Probl. Imaging, 6 (2012), pp.~315--320.

\bibitem{nguyen2011singularities}
{\sc L.~Nguyen}, {\em On singularities and instability of reconstruction in
  thermoacoustic tomography}, Tomography and inverse transport theory, Contemp.
  Math., 559 (2011), pp.~163--170.

\bibitem{Ngu09}
{\sc L.~V. Nguyen}, {\em A family of inversion formulas for thermoacoustic
  tomography}, Inverse Probl. Imaging, 3 (2009), pp.~649--675.

\bibitem{nguyen2016dissipative}
{\sc L.~V. Nguyen and L.~A. Kunyansky}, {\em A dissipative time reversal
  technique for photoacoustic tomography in a cavity}, SIAM J. Imaging Sci., 9
  (2016), pp.~748--769.

\bibitem{RenHaoZha13}
{\sc K.~Ren, H.~Gao, and H.~Zhao}, {\em A hybrid reconstruction method for
  quantitative {PAT}}, SIAM J. Imaging Sci., 6 (2013), pp.~32--55.

\bibitem{RosNtzRaz13}
{\sc A.~Rosenthal, V.~Ntziachristos, and D.~Razansky}, {\em Acoustic inversion
  in optoacoustic tomography: A review}, Curr. Med. Imaging Rev., 9 (2013),
  pp.~318--336.

\bibitem{RosRazNtz10}
{\sc A.~Rosenthal, D.~Razansky, and V.~Ntziachristos}, {\em Fast
  semi-analytical model-based acoustic inversion for quantitative optoacoustic
  tomography}, IEEE Trans. Med. Imag., 29 (2010), pp.~1275--1285.

\bibitem{SarTarCoxArr}
{\sc T.~Saratoon, T.~Tarvainen, B.~T. Cox, and S.~R. Arridge}, {\em A
  gradient-based method for quantitative photoacoustic tomography using the
  radiative transfer equation}, Inverse Probl., 29 (2013), p.~075006.

\bibitem{stefanov2003optical}
{\sc P.~Stefanov and G.~Uhlmann}, {\em Optical tomography in two dimensions},
  Meth. Appl. Anal., 10 (2003), pp.~001--010.

\bibitem{SteUhl09}
{\sc P.~Stefanov and G.~Uhlmann}, {\em Thermoacoustic tomography with variable
  sound speed}, Inverse Probl., 25 (2009), pp.~075011, 16.

\bibitem{TarCoxKaiArr12}
{\sc T.~Tarvainen, B.~T. Cox, J.~P. Kaipio, and S.~A. Arridge}, {\em
  Reconstructing absorption and scattering distributions in quantitative
  photoacoustic tomography}, Inverse Probl., 28 (2012), p.~084009.

\bibitem{taylor1981pseudodifferential}
{\sc M.~E. Taylor}, {\em Pseudodifferential operators, volume 34 of princeton
  mathematical series}, 1981.

\bibitem{treves1980introduction}
{\sc F.~Tr{\`e}ves}, {\em Introduction to pseudodifferential and Fourier
  integral operators Volume 1: pseudodifferential operators}, vol.~1, Springer
  Science \& Business Media, 1980.

\bibitem{WanAna11}
{\sc K.~Wang and M.~Anastasio}, {\em Photoacoustic and thermoacoustic
  tomography: Image formation principles}, in Handbook of Mathematical Methods
  in Imaging, Springer, 2011, ch.~18, pp.~781--815.

\bibitem{Wan09b}
{\sc L.~V. Wang}, {\em Multiscale photoacoustic microscopy and computed
  tomography}, Nat. Photonics, 3 (2009), pp.~503--509.

\bibitem{widlak2015stability}
{\sc T.~Widlak and O.~Scherzer}, {\em Stability in the linearized problem of
  quantitative elastography}, Inverse Probl., 31 (2015), p.~035005.

\bibitem{XuWan05}
{\sc M.~Xu and L.~V. Wang}, {\em Universal back-projection algorithm for
  photoacoustic computed tomography}, Phys. Rev. E, 71 (2005), p.~016706.

\bibitem{XuWan06}
{\sc M.~Xu and L.~V. Wang}, {\em Photoacoustic imaging in biomedicine}, Rev.
  Sci. Instrum., 77 (2006), p.~041101.

\bibitem{XuXuWan02}
{\sc Y.~Xu, M.~Xu, and L.~V. Wang}, {\em Exact frequency-domain reconstruction
  for thermoacoustic tomography--{II}: Cylindrical geometry}, IEEE Trans. Med.
  Imag., 21 (2002), pp.~829--833.

\bibitem{YaoSunHua10}
{\sc L.~Yao, Y.~Sun, and J.~Huabei}, {\em Transport-based quantitative
  photoacoustic tomography: simulations and experiments}, Phys. Med. Biol., 55
  (2010), pp.~1917--1934.

\bibitem{yuan2012calibration}
{\sc Z.~Yuan and H.~Jiang}, {\em A calibration-free, one-step method for
  quantitative photoacoustic tomography}, Med. Phys., 39 (2012),
  pp.~6895--6899.

\end{thebibliography}
\end{document}